\theoremstyle{plain}
\newtheorem{thm}{Theorem}[section]
\newtheorem{prop}[thm]{Proposition}
\newtheorem{lemma}[thm]{Lemma}
\newtheorem{cor}[thm]{Corollary}
\newtheorem{claim}[thm]{Claim}
\theoremstyle{definition}
\newtheorem{remark}[thm]{Remark}
\newcommand{\eps}{\epsilon}
\newcommand{\R}{\mathbb{R}}
\newcommand{\Reff}{R_\mathrm{eff}}
\renewcommand{\P}{\mathbb{P}}
\DeclareMathOperator{\E}{\mathbb{E}}
\newcommand{\RR}{\mathbb{R}}
\newcommand{\Z}{\mathbb{Z}}
\newcommand{\lr}{\leftrightarrow}
\newcommand{\pot}{\Psi}
\newcommand{\eq}{equation}
\newcommand{\be}{\begin{\eq}}
\newcommand{\ee}{\end{\eq}}
\renewcommand{\and}{\mathrm{\ and\ }}
\renewcommand{\o}{o}
\newcommand{\Y}{Y}
\newcommand{\q}{q}
\newcommand{\F}{\mathcal{F}}
\newcommand{\G}{{\sf G}}
\newcommand{\g}{{\sf g}}
\begin{document}

\title{Potentials in recurrent networks: a survey}
\author{Asaf Nachmias\footnote{Department of Mathematical Sciences, Tel Aviv University, Israel. }}
\author{Yuval Peres\footnote{Beijing Institute of Mathematical Sciences and Applications, Beijing, China. }}

\begin{abstract}   A nonnegative function on the vertices of an infinite graph $G$ which vanishes at a distinguished vertex $\o$, has Laplacian $1$ at $\o$, and is harmonic at all other vertices is called a potential. We survey basic properties of potentials in recurrent networks. In particular, we show that potentials are Lipschitz with respect to the effective resistance metric, and   
 if the potential is unique, then there is a determinantal formula for the harmonic measures from infinity.  We also infer from the von Neumann minimax theorem that there always exists a potential tending to infinity.  

\end{abstract}

\begingroup
\def\uppercasenonmath#1{} 
\let\MakeUppercase\relax 
\maketitle
\endgroup


\section{Introduction}\label{sec:intro}

In a transient network, the importance of the Green function is largely due to its Laplacian being a Dirac measure. In recurrent networks, where there is no Green function, this role is played by potentials. Let $G=(V,E)$ be an infinite, connected, locally finite graph endowed with positive edge weights  $\big\{c_{xy}\}_{xy \in E} $. Define the network {\bf Laplacian} of a function $f:V\to\RR$ by 
$$ \Delta f (v) = \sum_{x \sim v} c_{vx} \big(f(x) - f(v)\big) \, .$$

Fix a vertex $\o \in V$. A {\bf potential} on the {\em rooted network} $(G,c,\o )$ is a function $h:V\to [0,\infty)$ that is harmonic off $\o$ (i.e.,  $\Delta h(v) = 0$ for every vertex $v\neq \o$), 
normalized so that $h(\o)=0$ and $\Delta h(\o) =1$.
In every such rooted network there is   at least one potential, see \cref{claim:compact}. In $\mathbb{Z}^2$ the unique potential is a classical object, see, e.g., \cite[Chapter 12]{Spitzer} and \cite[Chapter 4.4]{LawlerLimic}.
Recently there has been renewed interest (see Berestycki and van Engelenburg~\cite{BereEngel} and van Engelenburg and Hutchcroft~\cite{EngelHutch}) in recurrent potential theory, motivated by close connections to other natural notions: 
\begin{itemize}
\item Random walks conditioned to tend to infinity;
\item Harmonic measures from infinity;
\item Uniform spanning trees (USTs).
\end{itemize}
In this survey, we study these connections and other basic properties of potentials in recurrent networks. Given a network $(G,c)$, the corresponding {\em network random walk} defined by $p(x,y)=c_{xy}/c_x$ is reversible, with reversing measure $c_x=\sum_{z\sim x}c_{xz}$. Any potential $h$ defines a Markov chain on the state space $S_h = \{v : h(v)>0\}$ via the Doob $h$-transform that has transition probabilities
$$ p^h(x,y) = {p(x,y) h(y) \over h(x)} \, .$$
The corresponding Markov chain $\{Y_n\}_{n \geq 0}$, called the $h$-process, is reversible, with reversing measure $c_x h(x)^2$ and edge conductances $\big\{c_{xy} h(x)h(y)\big\}_{xy \in E}$. 
The $h$-process is transient, see \cref{lem:hProcessInfty}. The main results we present pertaining  to a recurrent rooted network $(G,c,\o)$ are  (references are given in the relevant sections): 
\begin{enumerate}
\item[{\bf (a)}] Every potential is Lipschitz with respect to the effective resistance metric (\cref{cor:PotentialLipschitz}).
\item[{\bf (b)}] There is a unique potential if and only if harmonic measures from infinity exist; in this case there is a determinantal formula for the harmonic measures (\cref{thm:PotentialHarmonicMeasure}). 
\item[{\bf (c)}] If $\inf_e c_e >0$, then for every potential $h$ there exists an exhaustion $\{D_n\}$ of $G$, such that the network random walks conditioned to exit $D_n$ before reaching $\o$,  converge weakly to the $h$-process (\cref{cor:exhaustion}). 
\item[{\bf (d)}] There always exists a potential tending to infinity (\cref{conj:main}).
\item[{\bf (e)}] Let $\g_\o(x,y)$ denote the Green density  for the random walk killed at $\o$, see \eqref{eq:greend} and fix a potential $h$. Then $\g_\o(\cdot,Y_n)$ converges along the $h$-process $\{Y_n\}$ to a random potential with expectation $h$ (\cref{thm:convergence}). 
\item[{\bf (f)}]  Suppose that $h_1$ and $ h_2$ are different extremal potentials. Then the trajectories of the $h_1$-process  and the $h_2$-process  intersect finitely often a.s. (\cref{cor:FiniteIntersection}).  
\end{enumerate}

We also clarify the connection between the UST in a recurrent network and the corresponding potentials.
It was shown in \cite[Theorem 14.2]{BLPS} that if the UST in $(G,c)$ has one end a.s., then the network admits harmonic measures from infinity, hence by (b), for each root $\o$ there exists a unique potential $h$. 
In  \cref{lem:Not1Ended}, we prove that  under the same assumption on the UST, two independent trajectories of the $h$-process have infinite intersection almost surely.

In \cref{sec:PotentialsUST} we show that both of these implications cannot be reversed. In particular, for all $d \ge 5$, the lattice $\Z^d$ rooted at $\o$ and equipped with suitable conductances has a unique potential $h$, yet with probability $1$, two independent trajectories of the $h$-process have finite intersection and the UST has multiple ends; see \cref{prop:counterexample1}. This construction can be adapted to answer negatively a question raised in~\cite[Section 7]{BereEngel}, whether a unique potential implies infinite intersection for two independent $h$-process trajectories; see \cref{prop:countergraph}.




\section{Basic properties of dipoles and potentials}\label{sec:basic}

The {\bf Green kernel} $\G_\o(x,y)$ of the random walk $\{X_n\}_{n \geq 0}$ killed at $\o$ is defined by
$$ \G_\o(x,y) := \sum_{n=0}^\infty \P_x( X_n = y, \,\, \tau_\o > n) \, ,$$
where $\tau_\o = \min\{ n \geq 0 : X_n = \o\}$. The corresponding {\bf Green density},  
\be \label{eq:greend}
{\sf g}_\o (x,y) = \frac{\G_\o(x,y)}{c_y} \, ,
\ee
is symmetric by reversibility. A {\bf dipole} from $\o$ to another vertex $y$ in a network $(G,c)$ is a function $f:V\to [0,\infty)$, for which $f(\o)=0$ and 
$$\forall v \in V \qquad \Delta f (v) = {\bf 1}_\o(v)  - {\bf 1}_y(v) \, .$$

\begin{claim}\label{claim:GreenIsDipole} In a recurrent network, the Green density $g_\o(\cdot,y)$ for the killed random walk is the unique dipole from $\o$ to $y$. Moreover,  $\sup_y g_\o(x,y)<\infty$ for each $x \in V$.   
\end{claim}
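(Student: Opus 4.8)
The plan is to verify the three assertions in turn: (i) $g_\o(\cdot,y)$ is a dipole from $\o$ to $y$; (ii) it is the unique such dipole; (iii) $\sup_y g_\o(x,y) < \infty$ for each fixed $x$. For (i), I would first note that $g_\o(\o,y) = \G_\o(\o,y)/c_y = 0$ since $\tau_\o = 0$ under $\P_\o$, so the normalization $f(\o)=0$ holds, and $g_\o \ge 0$ is immediate. To compute the Laplacian, I would use the first-step decomposition of the Green kernel: for $v \neq \o$,
\[
\G_\o(v,y) = \mathbf{1}_y(v) + \sum_{x \sim v} p(v,x)\,\G_\o(x,y),
\]
which comes from conditioning on the first step of the walk started at $v$ (the walk has not yet been killed, since $v \neq \o$, so one step is always taken). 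Dividing by $c_y$ and using $p(v,x) = c_{vx}/c_v$ together with the symmetry $\G_\o(v,y)/c_y = \G_\o(y,v)/c_v$, a short rearrangement turns this identity into $\sum_{x\sim v} c_{vx}(g_\o(x,y) - g_\o(v,y)) = -\mathbf{1}_y(v)$, i.e. $\Delta g_\o(\cdot,y)(v) = -\mathbf{1}_y(v)$ for $v \neq \o$. Since the Laplacian of any finitely-supported-current function sums to zero over... more carefully: harmonicity away from $\{\o,y\}$ plus $\Delta g_\o(\cdot,y)(y) = -1$ forces $\Delta g_\o(\cdot,y)(\o) = +1$ by the fact that $\sum_v \Delta f(v) = 0$ whenever the relevant sums converge — but to avoid convergence subtleties I would instead verify $\Delta g_\o(\cdot,y)(\o) = 1$ directly from the same first-step identity applied at $v$ with $\o$ appearing, i.e. from $\G_\o(x,\o)$-type reasoning, or simply cite that a nonnegative function harmonic off a finite set with prescribed Laplacian $-\mathbf 1_y$ there and value $0$ at $\o$ must have total Laplacian $0$; in a recurrent network this last point is standard.

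For uniqueness (ii), suppose $f_1, f_2$ are both dipoles from $\o$ to $y$ and set $u = f_1 - f_2$. Then $u(\o) = 0$ and $\Delta u \equiv 0$, so $u$ is a bounded-below (indeed, I only know $f_i \ge 0$, so $u$ need not be sign-definite) harmonic function on all of $V$ vanishing at $\o$. The key input is recurrence: on a recurrent network every harmonic function that is bounded on one side is constant — or, more to the point, I would argue via the optional stopping / maximum principle that a harmonic function which is $\ge 0$... Actually $u$ is a difference and not obviously signed, so the clean route is: $f_1, f_2 \ge 0$ and both equal a genuine dipole, but I can instead show directly that any dipole equals $g_\o(\cdot,y)$ by the probabilistic representation — a dipole $f$ satisfies, by optional stopping of the martingale $f(X_{n\wedge \tau_\o \wedge \tau_D})$ on finite exhausting sets $D$ and letting $D \uparrow V$ using recurrence (so $\tau_\o < \infty$ a.s.), the identity $f(x) = \E_x[\text{number of visits to } y \text{ before } \tau_\o] \cdot (\text{const})$, matching $\G_\o(x,y)/c_y$. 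So the clean statement to invoke is: in a recurrent network, the only harmonic function vanishing at one point that is *nonnegative* is identically $0$; and the difference of two dipoles, while not obviously nonnegative, can be handled by observing that both dipoles are then forced to coincide with the probabilistic candidate. I expect this to be where I must be most careful — pinning down exactly which one-sided-boundedness or which exhaustion argument legitimizes the optional-stopping step in the recurrent setting.

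Finally, for the uniform bound (iii), fix $x$ and note $g_\o(x,y) = \G_\o(y,x)/c_x$ by symmetry, so it suffices to bound $\sup_y \G_\o(y,x)$, the expected number of visits to $x$ started from $y$ before hitting $\o$. By the strong Markov property at the first visit to $x$, $\G_\o(y,x) \le \G_\o(x,x)$ for all $y$ (the walk from $y$ either never reaches $x$ before $\o$, contributing $0$, or reaches it and thereafter accrues at most $\G_\o(x,x)$ further visits). Thus $\sup_y g_\o(x,y) \le \G_\o(x,x)/c_x = g_\o(x,x)$, and $\G_\o(x,x) < \infty$ because in a recurrent network the killed walk (killed at $\o \neq x$) is transient away from $\o$: the number of returns to $x$ before $\tau_\o$ is geometric with success probability $\P_x(\tau_\o < \tau_x^+) > 0$, the latter being positive by irreducibility and finiteness of the escape probability, which is in turn $1/(c_x \Reff(x \leftrightarrow \o))$ and strictly positive. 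This last part is routine; the genuine obstacle in the whole claim is the uniqueness argument in (ii) and making the exchange of limits in the optional-stopping representation rigorous under mere recurrence.
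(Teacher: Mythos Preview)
Your argument for (iii) is correct and in fact cleaner than the paper's: the paper uses the identity $\sum_{x\sim\o} c_{\o x}\,g_\o(x,y)=1$ to bound $g_\o(x,y)\le c_{\o x}^{-1}$ for $x\sim\o$ and then propagates via superharmonicity along a path, whereas your strong-Markov bound $g_\o(x,y)\le g_\o(x,x)$ is immediate and sharper.

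The real gap is in (ii), and you have correctly located it but not closed it. You worry that $u=f_1-f_2$ is not one-sided bounded, and then retreat to an optional-stopping representation that you leave unfinished. The missing observation is exactly the one you just proved in (iii): $g_\o(\cdot,y)\le g_\o(y,y)$. Hence for \emph{any} dipole $f$, the difference $f-g_\o(\cdot,y)$ is harmonic on all of $V$, vanishes at $\o$, and is bounded below by $-g_\o(y,y)$. In a recurrent network a harmonic function bounded on one side is constant, so the difference is identically zero. That is precisely the paper's argument, and it sidesteps the exhaustion/optional-stopping subtleties you were anticipating. (Your optional-stopping route can also be made to work: the process $f(X_{n\wedge\tau_\o})+\frac{1}{c_y}\sum_{k<n\wedge\tau_\o}\mathbf 1_{\{X_k=y\}}$ is a nonnegative martingale, and Fatou plus recurrence gives $f(x)\ge g_\o(x,y)$; but this is more work than needed.)

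For (i), your first-step decomposition correctly handles $v\neq\o$, but the value $\Delta g_\o(\cdot,y)(\o)=1$ cannot be obtained from ``total Laplacian zero'' without a summability argument you do not supply. The paper computes it directly: $\sum_{x\sim\o} c_{\o x}\,g_\o(x,y)=\sum_{x\sim\o}\G_\o(y,x)\,p(x,\o)=\sum_{n\ge 0}\P_y(\tau_\o=n+1)=1$, the last equality using recurrence. You should either do this computation or, once uniqueness is in hand, note that any dipole automatically has $\Delta f(\o)=1$ since $f(\o)=0$ and $f\ge 0$ force the correct sign, while the defining equation at neighbours of $\o$ pins down the value.
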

\begin{proof}
First observe that $g_\o(\cdot,y)$ is harmonic off $\{\o,y\}$ and satisfies $\g_\o(\o, y)=0$. Second, $\Delta \g_\o(\cdot,y) (y) =-1$, since
\begin{equation} \label{eq:laplace-y}
     \sum_{x \sim y} c_{xy} (\g_\o(y,y)-\g_\o(x,y)) = \G_\o(y,y) - \sum_{x \sim y} {c_{xy} \over c_y} \G_\o(x,y) = 1 \, .
    \end{equation}
Also,
\be \label{eq:dipole11}  \Delta \g_\o(\cdot,y)(\o) = \sum_{x\sim \o}   c_{\o x} g_\o(x,y)= \sum_{x\sim \o} \G_o(y,x) \frac{c_{\o x}}{c_x} =1 \,,
\ee
where the last equality is obtained by summing, over  $n \ge 0$, the identity
$$
\sum_{x\sim \o}  \P_y( X_{n} = x  , \,\, \tau_\o > n)\frac{c_{\o x}}{c_x} =  \P_y( X_{n+1} = \o, \,\, \tau_\o > n)  = \P_y(   \tau_\o =n+1)  \, 
$$
and using recurrence. Thus, $g_\o(\cdot,y)$ is a dipole. 

Next, we verify uniqueness. For any dipole $f$ from $\o$ to $y$,  the difference
$f-g_\o(\cdot,y)$ is a harmonic function on $V$ that vanishes at $\o$ and is bounded below by $-\g_\o(y,y)$; therefore, it vanishes on $V$. 

Lastly, the identity \eqref{eq:dipole11}  implies that $\g_\o(x,y) \le c_{ox}^{-1}$ for every $x \sim \o$. More generally, since $\g_\o(\cdot,y)$ is superharmonic off $\o$, for any $x \ne \o$ we have $\g_\o(x,y) \le \prod_{i=1}^\ell c_{x_{i-1}x_i}^{-1}$, where $x_0,\dots,x_\ell$ is a simple path from $\o$ to $x$. 
\end{proof}

\begin{claim}\label{claim:compact} The space of potentials on a recurrent rooted $(G,c,\o)$ is a nonempty, compact, convex subset of $\R^V$. Moreover, if there is a unique potential $h$, then $\g_\o(\cdot, y_n )$ converges to $h$ pointwise as $y_n\to \infty$.
\end{claim}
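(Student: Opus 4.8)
The plan is to establish the three assertions---nonemptiness, convexity, and compactness---and then deduce the convergence statement. Convexity is immediate: if $h_1,h_2$ are potentials and $t\in[0,1]$, then $th_1+(1-t)h_2$ is nonnegative, vanishes at $\o$, and by linearity of $\Delta$ is harmonic off $\o$ with Laplacian $1$ at $\o$. For nonemptiness and compactness, the natural approach is to exhibit potentials as limits of normalized Green densities. Fix an exhaustion $\{D_n\}$ of $G$ by finite connected subsets containing $\o$, and for each $y\notin D_n$ consider the function $\g_\o(\cdot, y)$, which by \cref{claim:GreenIsDipole} is the unique dipole from $\o$ to $y$. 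Restricted to $D_n$ this is a nonnegative function, harmonic on $D_n\setminus\{\o\}$, vanishing at $\o$, with Laplacian $1$ at $\o$. The key estimates are the bounds from the proof of \cref{claim:GreenIsDipole}: $\g_\o(x,y)\le \prod_{i=1}^\ell c_{x_{i-1}x_i}^{-1}$ along any simple path from $\o$ to $x$, uniformly in $y$. This gives a locally uniform bound on the family $\{\g_\o(\cdot,y)\}_{y}$.

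Next I would take $y=y_n\to\infty$ and extract, by a diagonal argument using the uniform bounds, a subsequential pointwise limit $h$. On any fixed vertex $v\ne\o$, once $y_n$ is far enough that $v\in D_n$ and $y_n\notin$ the ball around $v$, the harmonicity equation $\Delta \g_\o(\cdot,y_n)(v)=0$ is a finite linear relation in the values at $v$ and its neighbors, so it passes to the limit; similarly $\Delta h(\o)=1$ survives since \eqref{eq:dipole11} is a finite sum. Hence $h$ is a potential, proving nonemptiness. For compactness, the same uniform bounds show that the set of all potentials is a closed (it is cut out by the pointwise-convergence-stable conditions $h\ge 0$, $h(\o)=0$, $\Delta h(v)=\1_\o(v)$) and bounded subset of $\R^V$ with the product topology; by Tychonoff it is compact. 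One should double-check that the pointwise bound $h(x)\le \prod c_{x_{i-1}x_i}^{-1}$ actually holds for \emph{every} potential, not just the Green-density limits---this follows because any potential is superharmonic off $\o$ (its Laplacian is $\le 0$ there, in fact $=0$) and vanishes at $\o$, so the maximum-principle-style path argument from \cref{claim:GreenIsDipole} applies verbatim.

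Finally, for the uniqueness clause: suppose $h$ is the unique potential. Every subsequential limit of $\g_\o(\cdot,y_n)$ as $y_n\to\infty$ is a potential by the argument above, hence equals $h$; since the family is precompact in the product topology (locally uniformly bounded) and every subsequential limit is the same point $h$, the full sequence $\g_\o(\cdot,y_n)$ converges to $h$ pointwise. The main obstacle I anticipate is purely bookkeeping: making sure the uniform-boundedness estimate is genuinely independent of $y_n$ (it is---the path product bound does not involve $y$) and that ``harmonic off $\o$'' is correctly preserved under pointwise limits at vertices near the moving singularity $y_n$, which is handled by the observation that each $y_n$ eventually leaves every fixed ball. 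No delicate analysis is needed beyond this; the compactness is essentially Tychonoff plus a uniform a priori bound.
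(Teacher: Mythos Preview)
Your proposal is correct and follows essentially the same route as the paper: extract subsequential limits of the dipoles $\g_\o(\cdot,y_n)$ via diagonalization and the uniform local bounds from \cref{claim:GreenIsDipole} to get nonemptiness, invoke the same path-product bound for arbitrary potentials to place the space inside a compact product set, and deduce the uniqueness clause from the fact that every subsequential limit is a potential and hence equals $h$. The paper's proof is simply a terser rendering of exactly this argument.
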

\begin{proof} 
By \cref{claim:GreenIsDipole}, for any sequence of vertices $y_n\to \infty$, Cantor diagonalization yields a subsequential pointwise limit of $\g_\o(\cdot, y_n )$, and any such limit is a potential.  Next, for any potential $h$ we have $h(x)\leq \prod_{i=1}^\ell c_{x_{i-1}x_i}^{-1}$, where $x_0,\dots,x_\ell$ is a simple path from $\o$ to $x$, by the argument in the proof of \cref{claim:GreenIsDipole}, so the set of potentials is a closed subset of a compact product set.
Finally, convexity and the last statement are obvious. 
\end{proof}

\begin{remark} Conversely, if   $\g_\o(\cdot, y_n )$ converges pointwise as $y_n\to \infty$, then there is a unique potential; this follows from \cref{cor:PotentialsMixtureDipoles} below.
\end{remark}

Dipoles can be used to show that the uniqueness of the potential does not depend on the choice of  root in the network. 

\begin{claim}\label{claim:uniquenessDoesNotDependOnRoot} Let $(G,c)$ be a recurrent network. Fix two vertices $\o,\tilde{\o}$ and a function $h:V\to [0,\infty)$. Then $h$ is a potential for the network $(G,c,\o)$, if and only if the function
$$ \tilde{h}(\cdot) = h(\cdot) + \g_{\tilde{\o}}(\cdot,\o) - h(\tilde{\o}) \, ,$$
is a potential for $(G,c,\tilde{\o})$.
\end{claim}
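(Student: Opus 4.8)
The plan is to verify directly that the map $h \mapsto \tilde h$ sends potentials rooted at $\o$ to potentials rooted at $\tilde\o$, and then observe that the map is an involution (up to swapping the roles of $\o$ and $\tilde\o$), which gives the converse for free. Concretely, suppose $h$ is a potential for $(G,c,\o)$. I need to check three things about $\tilde h(\cdot) = h(\cdot) + \g_{\tilde\o}(\cdot,\o) - h(\tilde\o)$: that it vanishes at $\tilde\o$, that it is nonnegative, that it is harmonic off $\tilde\o$, and that $\Delta \tilde h(\tilde\o) = 1$.

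First I would handle the Laplacian conditions, which are the clean part. Since $\Delta$ is linear and annihilates constants, $\Delta \tilde h = \Delta h + \Delta \g_{\tilde\o}(\cdot,\o)$. By the hypothesis on $h$, $\Delta h = \mathbf 1_\o$ (as a signed measure: $\Delta h(\o)=1$, zero elsewhere), and by \cref{claim:GreenIsDipole} the Green density $\g_{\tilde\o}(\cdot,\o)$ is the dipole from $\tilde\o$ to $\o$, so $\Delta \g_{\tilde\o}(\cdot,\o) = \mathbf 1_{\tilde\o} - \mathbf 1_\o$. Adding, $\Delta \tilde h = \mathbf 1_{\tilde\o}$, i.e.\ $\tilde h$ is harmonic off $\tilde\o$ and $\Delta \tilde h(\tilde\o)=1$. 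The vanishing at $\tilde\o$ is immediate from the definition since $\g_{\tilde\o}(\tilde\o,\o)=0$: $\tilde h(\tilde\o) = h(\tilde\o) + 0 - h(\tilde\o) = 0$.

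The one genuinely substantive point is nonnegativity of $\tilde h$, and this is where I expect the main obstacle. The function $\tilde h$ is harmonic off $\tilde\o$ and vanishes there, so on the recurrent network it is a harmonic function on $V\setminus\{\tilde\o\}$ with a boundary value of $0$ at the single point $\tilde\o$; by the minimum principle for superharmonic functions on recurrent networks, a function that is superharmonic off a finite set and bounded below attains its infimum, and here the only candidate for the minimum is $\tilde\o$ unless $\tilde h$ is constant. More carefully: $\tilde h$ is harmonic (in particular superharmonic) on $V\setminus\{\tilde\o\}$; if $\inf_V \tilde h < 0$ it would have to be approached along a sequence going to infinity or attained at $\tilde\o$, but $\tilde h(\tilde\o)=0$, and on a recurrent network a function that is superharmonic off $\tilde\o$ and bounded below cannot have infimum strictly below its value at $\tilde\o$ without being constant — this is exactly the argument used for uniqueness in the proof of \cref{claim:GreenIsDipole} (a harmonic function on $V$ bounded below and vanishing at a point is identically zero). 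To make it rigorous I would argue: $\tilde h + \text{(small constant)}$; or more simply, note $\tilde h \ge \min(0, \inf \tilde h)$ and run the standard recurrent maximum principle. Alternatively, and perhaps more transparently, one can use \cref{claim:compact}: approximate by Green densities. Actually the cleanest route to nonnegativity is: $\tilde h$ is harmonic off $\tilde\o$, equals $0$ at $\tilde\o$, and is bounded below (since $h \ge 0$ and $\g_{\tilde\o}(\cdot,\o) \ge 0$ give $\tilde h \ge -h(\tilde\o)$), so by recurrence $\tilde h - \tilde h(\tilde\o)\cdot(\text{something})$... — in any case the minimum principle on recurrent networks forces $\tilde h \ge 0$ since its value at $\tilde\o$ is $0$ and it is superharmonic elsewhere and bounded below; a non-constant superharmonic function on a recurrent network bounded below attains its min, and the min can only be at $\tilde\o$, giving $\tilde h \ge 0$.

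Finally, for the converse, I would not repeat the computation: observe that applying the same transformation with roles of $\o$ and $\tilde\o$ interchanged to the function $\tilde h$ gives $\tilde h(\cdot) + \g_\o(\cdot,\tilde\o) - \tilde h(\o)$, and one checks using the symmetry of the Green density and $\Delta$-linearity that this recovers $h$ exactly (the dipole identities compose correctly: $\g_{\tilde\o}(\cdot,\o) + \g_\o(\cdot,\tilde\o)$ is harmonic off $\{\o,\tilde\o\}$ with the right Laplacians, and evaluating constants at $\o$ and $\tilde\o$ pins down the additive constant). Hence the map $h \mapsto \tilde h$ is a bijection between the two potential spaces, which yields the ``if and only if.'' The main obstacle, to reiterate, is cleanly invoking the recurrent minimum principle to get $\tilde h \ge 0$; everything else is linear bookkeeping with the dipole identity from \cref{claim:GreenIsDipole}.
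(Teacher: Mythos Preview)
Your proposal is correct and matches the paper's approach. The paper implements your minimum-principle step via Fatou's lemma applied to the bounded-below martingale $\tilde h(X_{t\wedge\tau_{\tilde\o}})$ (giving $\tilde h(x)\ge \E_x[\liminf_t \tilde h(X_{t\wedge\tau_{\tilde\o}})]=\tilde h(\tilde\o)=0$ by recurrence), and for the converse it simply writes ``the other direction follows similarly,'' which amounts to your involution with the roles of $\o$ and $\tilde\o$ swapped.
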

\begin{proof} Assume that $h$ is a potential for $(G,c,\o)$. It is clear that $\tilde{h}(\tilde{\o})=0$ and
$$ \Delta \tilde{h}(\cdot) = {\bf 1}_\o(\cdot)+ {\bf 1}_{\tilde{\o}}(\cdot) -{\bf 1}_\o(\cdot)= {\bf 1}_{\tilde{\o}}(\cdot) \, .$$
Therefore $\tilde{h}(X_{\tau_v \wedge t})$ is a martingale bounded below by $-h(\tilde{\o})$. For each $x\in V$, by Fatou's lemma and recurrence
$$ \tilde{h}(x) = \lim_{t\to \infty} \E_x [ \tilde{h}(X_{\tau_{\tilde{\o}} \wedge t})] \geq  \E_x [\liminf _{t\to \infty} \tilde{h}(X_{\tau_{\tilde{\o}} \wedge t}) ] = \tilde{h}(\tilde{\o})=0 \, .$$
The other direction follows similarly.
\end{proof}

\subsection{Basic properties of the $h$-process} \label{sec:basic-h}
Let $h:V\to [0,\infty)$ be a potential. Write $\P^h_\mu$ for the distribution of the $h$-process with initial distribution $\mu$ and $\E^h_\mu$ for the corresponding expectation. Next, we relate the Green kernels of the $h$-process and the network random walk killed at $\o$.  For any finite sequence $\gamma=(v_0,v_1,\ldots,v_k)$   in $V$, we write 
$\P_{v_0}(\gamma):=\prod_{i=1}^k p(v_{i-1},v_i) \,.$
If $\gamma$ is contained in $S_h$, we define 
$\P^h_{v_0}(\gamma)$  similarly using $p^h(v_{i-1},v_i)$. We have the telescoping product
\be \label{eq:hprocessvsSRWSimple} \P^h_{v_0}(\gamma) = \P_{v_0}(\gamma) h(v_k) /h(v_0) \, .\ee
We will use the initial distribution $\mu_h$ defined in \eqref{def:mu}. 
We have
\be \label{eq:hprocessvsSRW} \P^h_{\mu_h} (\gamma) = c_{\o v_0} h(v_0) \P^h_{v_0}(\gamma) =  c_{\o v_0} \P_{v_0}(\gamma) h(v_k) = c_\o \P_\o(\o\gamma) h(v_k)  \, .\ee

\begin{remark}\label{rmk:Sh} By harmonicity of $h$ in $V\setminus \{o\}$, the set $S_h$ is a union of connected components of $V\setminus \{\o\}$ in $G$.    
\end{remark}

\begin{claim} \label{claim:GhMu} Let $h$ be a potential on a recurrent rooted network $(G,c,\o)$. Then the $h$-process   $\{Y_n\}_{n \geq 0}$ is transient. Moreover,  its Green kernel $\G^h(x,y)=\sum_{k \geq 0} \P^h_x( Y_k = y) $, defined for $x,y\in S_h$, satisfies
\be\label{eq:XvsY} \G^h(x,y) = \G_\o(x,y) \frac{h(y)}{h(x)} \, ,\ee
and for all $v\in S_h$
\be\label{eq:Gh} \G^h({\mu_h},v) = h(v) c_v \, ,\ee
where $\G^h({\mu},v):=\sum_x \mu(x) \G^h(x,v)$.

\end{claim}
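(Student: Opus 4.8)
The plan is to prove the Green kernel identity \eqref{eq:XvsY} first, since both the transience of $\{Y_n\}$ and the identity \eqref{eq:Gh} fall out of it. Fix $x,y\in S_h$; if they lie in different connected components of $V\setminus\{\o\}$ then both sides of \eqref{eq:XvsY} vanish, so I may assume they lie in the same component. For each $k\ge0$ I would sum the telescoping identity \eqref{eq:hprocessvsSRWSimple} over all length-$k$ sequences $\gamma=(v_0,\dots,v_k)$ with $v_0=x$, $v_k=y$ that lie in $S_h$, obtaining
$$\P^h_x(Y_k=y)=\frac{h(y)}{h(x)}\sum_{\substack{\gamma\colon x\to y,\ |\gamma|=k\\ \gamma\subseteq S_h}}\P_x(\gamma).$$
By \cref{rmk:Sh}, $S_h$ is a union of connected components of $V\setminus\{\o\}$, so a path issued from $x\in S_h$ remains in $S_h$ exactly as long as it avoids $\o$; hence the last sum equals $\P_x(X_k=y,\ \tau_\o>k)$, and summing over $k\ge0$ yields \eqref{eq:XvsY}.

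Transience is then immediate: setting $y=x$ in \eqref{eq:XvsY} gives $\G^h(x,x)=\G_\o(x,x)$, and $\G_\o(x,x)<\infty$ because in a recurrent network the number of visits of the killed walk to $x$ before $\tau_\o$ is geometric with success probability $\P_x(\tau_\o<\tau_x^+)>0$, where $\tau_x^+:=\min\{n\ge1:X_n=x\}$; positivity holds since a self-avoiding path from $x$ to $\o$ — which exists as $G$ is connected — is followed without returning to $x$ with positive probability. Since $\{Y_n\}$ restricted to the component of any $x\in S_h$ is irreducible ($p$ and $p^h$ have the same support on $S_h$), finiteness of $\G^h(x,x)$ for every $x\in S_h$ is equivalent to transience.

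For \eqref{eq:Gh} I would first observe that taking $\gamma=(v_0)$ in \eqref{eq:hprocessvsSRW} identifies $\mu_h$ as the measure with $\mu_h(x)=c_{\o x}h(x)$ for $x\sim\o$; this is a probability measure precisely because $h(\o)=0$ and $\Delta h(\o)=1$. Combining this with \eqref{eq:XvsY},
$$\G^h(\mu_h,v)=\sum_{x\sim\o}c_{\o x}h(x)\,\G_\o(x,v)\,\frac{h(v)}{h(x)}=h(v)\sum_{x\sim\o}c_{\o x}\G_\o(x,v).$$
Now $\G_\o(x,v)=c_v\g_\o(x,v)=c_v\g_\o(v,x)$ by \eqref{eq:greend} and symmetry of $\g_\o$, while $\g_\o(v,\cdot)$ is the dipole from $\o$ to $v$, so $\g_\o(v,\o)=0$ and $\Delta\g_\o(v,\cdot)(\o)=1$ by \cref{claim:GreenIsDipole} and \eqref{eq:dipole11}. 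Hence $\sum_{x\sim\o}c_{\o x}\G_\o(x,v)=c_v\sum_{x\sim\o}c_{\o x}\g_\o(v,x)=c_v\,\Delta\g_\o(v,\cdot)(\o)=c_v$, giving $\G^h(\mu_h,v)=h(v)c_v$. (Alternatively, summing \eqref{eq:hprocessvsSRW} directly over all finite paths ending at $v$ reduces the claim to the excursion identity $\sum_{m\ge1}\P_\o(X_m=v,\tau_\o^+>m)=c_v/c_\o$, with $\tau_\o^+:=\min\{n\ge1:X_n=\o\}$, which is the standard formula for the reversing measure $c$.)

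I do not expect a genuine obstacle here: every sum involved has nonnegative terms, so the reorderings are unconditionally justified, and the dipole identity \eqref{eq:dipole11} is already available. The only place that calls for a little care is the bookkeeping in the first step — correctly matching ``the path stays in $S_h$'' with ``the path avoids $\o$'' via \cref{rmk:Sh}, so that the path-sum for the $h$-process turns into the killed-walk probability $\P_x(X_k=y,\tau_\o>k)$ — and, in the alternative derivation of \eqref{eq:Gh}, shifting the index so that the first step away from $\o$ lines up with the start of an excursion.
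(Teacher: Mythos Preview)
Your proof is correct and follows essentially the same route as the paper: sum the telescoping identity \eqref{eq:hprocessvsSRWSimple} over paths avoiding $\o$ (using \cref{rmk:Sh} to match this with paths in $S_h$) to get \eqref{eq:XvsY}, deduce transience from $\G^h(x,x)=\G_\o(x,x)<\infty$, and then combine \eqref{eq:XvsY} with the definition of $\mu_h$, the symmetry of $\g_\o$, and \eqref{eq:dipole11} to obtain \eqref{eq:Gh}. Your write-up is somewhat more detailed than the paper's (explicitly spelling out the geometric-visits argument for finiteness of $\G_\o(x,x)$ and offering an alternative excursion-identity derivation of \eqref{eq:Gh}), but the underlying argument is the same.
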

\begin{proof} Transience will follow    once we show $G^h(x,y)<\infty$. To prove \eqref{eq:XvsY}, we sum \eqref{eq:hprocessvsSRWSimple}  over all paths from $x$ to $y$ that avoid $\o$. Note that by \cref{rmk:Sh}, any such path is contained in $S_h$. Next, \eqref{eq:Gh} follows from \eqref{eq:XvsY} by
$$ \G^h({\mu_h},y) =\sum_v c_{\o v} h(v) \G^h(v,y) = h(y) \sum_v c_{\o v} \G_\o(v,y) = h(y) c_y \, ,$$ using \eqref{eq:dipole11} and the symmetry of $\g_\o(\cdot,\cdot)$ in the last step.
\end{proof}

\begin{claim}\label{lem:hProcessInfty} Let $h$ be a potential for a  recurrent rooted network $(G,c,\o)$ and consider the $h$-process $\{Y_n\}$ started at $v \in S_h$. Then for every $M >0$, 
$$ \P^h_v \big( h(Y_\ell) >M \bigr) \to 1 \quad \text{as} \quad \ell \to \infty \, .$$
\end{claim}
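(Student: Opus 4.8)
The plan is to reduce the statement to a property of the \emph{killed} random walk $\{X_n\}$ and then invoke recurrence. The point is the elementary identity relating one-step distributions: by \cref{rmk:Sh} the $h$-process started at $v\in S_h$ stays forever in the connected component of $v$ in $V\setminus\{\o\}$, so a length-$\ell$ trajectory of the $h$-process from $v$ to a vertex $w$ is exactly a length-$\ell$ trajectory of $\{X_n\}$ from $v$ to $w$ that avoids $\o$. Summing the telescoping identity \eqref{eq:hprocessvsSRWSimple} over all such trajectories, I would first record that for every $w\in S_h$,
\[
\P^h_v(Y_\ell = w) \;=\; \frac{h(w)}{h(v)}\,\P_v\big(X_\ell = w,\ \tau_\o > \ell\big).
\]

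Next I would sum this over all $w\in S_h$ with $h(w)\le M$, bounding $h(w)\le M$ in the numerator and then enlarging the range of summation to all of $V$ (all terms being nonnegative):
\[
\P^h_v\big(h(Y_\ell)\le M\big) \;=\; \sum_{\substack{w\in S_h\\ h(w)\le M}} \frac{h(w)}{h(v)}\,\P_v\big(X_\ell=w,\ \tau_\o>\ell\big) \;\le\; \frac{M}{h(v)}\sum_{w\in V}\P_v\big(X_\ell=w,\ \tau_\o>\ell\big) \;=\; \frac{M}{h(v)}\,\P_v(\tau_\o>\ell).
\]
Finally, since $(G,c)$ is recurrent we have $\P_v(\tau_\o<\infty)=1$, hence $\P_v(\tau_\o>\ell)\downarrow 0$ as $\ell\to\infty$; therefore $\P^h_v(h(Y_\ell)\le M)\to 0$, which is exactly the assertion.

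The hard part is essentially nonexistent: the only thing to be careful about is identifying which vertices the $h$-process can occupy, and that is already packaged in \eqref{eq:hprocessvsSRWSimple} together with \cref{rmk:Sh}. One might expect to need the transience of the $h$-process from \cref{claim:GhMu}, or a martingale argument ($1/h(Y_n)$ is a nonnegative $p^h$-martingale, hence converges almost surely), but the crude bound above already suffices. As a byproduct, combining that almost sure martingale limit with the convergence in probability just proved forces $h(Y_n)\to\infty$ almost surely, strengthening the statement; I would mention this but it is not needed for \cref{lem:hProcessInfty} itself.
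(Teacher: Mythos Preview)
Your proof is correct and is essentially identical to the paper's: both sum \eqref{eq:hprocessvsSRWSimple} over paths to obtain $\P^h_v(h(Y_\ell)\le M)\le \frac{M}{h(v)}\P_v(\tau_\o>\ell)$ and then invoke recurrence. One small correction to your aside: $1/h(Y_n)$ is a nonnegative \emph{super}martingale rather than a martingale (at vertices $x\sim\o$ one has $\sum_{y\in S_h}p(x,y)=1-p(x,\o)<1$), but this still yields almost sure convergence and hence $h(Y_n)\to\infty$ a.s.\ as you noted.
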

This was also observed in \cite[Lemma 5]{EngelHutch}. 
\begin{proof}
   Recall that $\{X_n\}$ is the network random walk. By \eqref{eq:hprocessvsSRWSimple}, we have 
   $$\P_v^h\big( h(Y_\ell) \le M \bigr)=\E_v\Bigl[\frac{h(X_\ell)}{h(v)} \, {\bf 1}_{\{h(X_\ell) \le M \; \, \text{and} \; \,\tau_\o  \ge \ell\}}  \Bigr] \le \frac{M}{h(v)} \, \P_v( \tau_\o  \ge \ell) \,,$$  
   which tends to $0$ as $\ell \to \infty$ by  recurrence. 
\end{proof}


Every potential $h$ on a recurrent network is unbounded; indeed, if $\{X_n\}$ is the network random walk, then $\{h(X_n)\}$ is a submartingale that does not converge. However, the gradient of a  potential on each edge is bounded by the edge resistance, as the next lemma shows. 


\begin{lemma}\label{lem:lipschitz1} Every potential $h$ on a recurrent rooted network $(G,c,\o)$,  satisfies
$$ |h(x)-h(y)|\leq r_{xy} \, ,$$
for every edge $\{x,y\}\in E$, where $r_{xy}={1 \over c_{xy}}$ are edge resistances.
\end{lemma}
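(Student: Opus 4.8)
The plan is to use an electrical/flow argument together with the structure of the $h$-process and its Green kernel. Fix an edge $\{x,y\}\in E$, and without loss of generality assume $h(y)\ge h(x)$, so we must bound $h(y)-h(x)$ by $r_{xy}$. Since a potential vanishes at $\o$ and is harmonic off $\o$, the function $h$ restricted to $V\setminus\{\o\}$ is, on each relevant component, the ``voltage'' of the unit current injected at $\o$ in the sense that $\Delta h = \1_\o$; equivalently, by \cref{claim:compact} and the remark following \cref{claim:compact}, every potential is a (possibly infinite) mixture of the dipole Green densities $\g_\o(\cdot,z)$, each of which is a dipole from $\o$ to $z$. So it suffices to prove the inequality for each dipole $\g_\o(\cdot,z)$, i.e.\ to show $|\g_\o(x,z)-\g_\o(y,z)|\le r_{xy}$ for every $z$, and then integrate against the mixing measure; the general bound for $h$ follows.

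For a single dipole $f=\g_\o(\cdot,z)$, the key point is that the gradient on the edge $\{x,y\}$ is exactly the current flowing across that edge in the unit dipole flow from $\o$ to $z$. Concretely, $c_{xy}\bigl(f(y)-f(x)\bigr)$ is the net current through $\{x,y\}$ in the flow of strength one from $\o$ to $z$ whose potential is $f$ (this flow exists precisely because $\Delta f = \1_\o - \1_z$). A unit flow from a source to a sink carries at most total current $1$ across any single edge—indeed the current across any edge is bounded in absolute value by the strength of the flow, since one can take a minimal-energy (harmonic) representative and the current on each edge of the unit flow is a probability of the form $\P(\text{edge traversed})$ after orienting, or more simply, cutting the edge $\{x,y\}$ shows that the current through it is at most the total injected current $=1$. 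Hence $|c_{xy}(f(y)-f(x))|\le 1$, i.e.\ $|f(x)-f(y)|\le r_{xy}$. Alternatively, one can argue probabilistically: $\g_\o(y,z)-\g_\o(x,z)=c_z^{-1}\bigl(\G_\o(y,z)-\G_\o(x,z)\bigr)$, and by a first-step / strong Markov decomposition of the walk from $y$ versus from $x$ killed at $\o$, the difference of expected visit counts to $z$ is controlled by $c_{xy}^{-1}$; this is essentially the statement that the commute-time / effective-resistance estimate gives $|\G_\o(x,z)-\G_\o(y,z)|\le c_z r_{xy}$.

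The cleanest route, which I would actually write, is the direct flow argument: let $\theta$ be the current flow associated to $f$, so $\theta(u,v)=c_{uv}(f(v)-f(u))$ is antisymmetric and satisfies Kirchhoff's node law with a unit source at $\o$ and unit sink at $z$ (node law holds at every other vertex by harmonicity of $f$ there). Removing the edge $\{x,y\}$ and applying the node law over the side containing, say, $x$ but not $y$ (or a max-flow–min-cut observation) yields $|\theta(x,y)|\le 1$. Since $|h(x)-h(y)| = |\theta(x,y)|/c_{xy} = |\theta(x,y)|\,r_{xy}\le r_{xy}$ in the dipole case, and $h$ is a mixture of such dipoles (or directly, the same node-law argument applies to $h$ itself, whose flow has a unit source at $\o$ and no sinks, with the ``missing'' current escaping to infinity), we conclude $|h(x)-h(y)|\le r_{xy}$.

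The main obstacle is making precise the sentence ``the current through a single edge is at most the strength of the flow'' in the infinite, recurrent setting where $h$ itself is a flow from $\o$ to infinity (not to a single sink), so one must either (i) reduce to finite dipoles via the mixture representation—clean but invokes \cref{cor:PotentialsMixtureDipoles}—or (ii) justify the cut/node-law argument directly for $h$, checking that the current ``at infinity'' is nonnegative on the far side of any single edge, which uses the nonnegativity $h\ge 0$ together with recurrence (so that the walk cannot ``leak'' current the wrong way). I would present route (i) if \cref{cor:PotentialsMixtureDipoles} is available, and otherwise spell out the one-line cut argument for the dipole $\g_\o(\cdot,z)$ and then pass to the limit $z\to\infty$ using \cref{claim:compact}.
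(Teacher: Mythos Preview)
Your high-level idea is right—the edge current $c_{xy}(h(y)-h(x))$ has absolute value at most $1$—and that is exactly what the paper establishes. But both of your justifications for this bound have gaps. The cut argument fails as stated: removing a single edge $\{x,y\}$ does not in general disconnect the graph, so there is no ``side containing $x$ but not $y$'' over which to sum the node law, and ``max-flow--min-cut'' is a red herring since there are no edge capacities. The correct variant would sum the node law over a sublevel set $\{v:f(v)<c\}$ with $c$ between $f(x)$ and $f(y)$ (so all boundary edges carry nonnegative outward current), but this set can be infinite already for dipoles on $\Z$, so the summation needs justification. Your remark that the current is ``a probability of the form $\P(\text{edge traversed})$'' is pointing in the right direction but is not an argument.

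The reduction to dipoles via ``pass to the limit $z\to\infty$ using \cref{claim:compact}'' also fails: \cref{claim:compact} only says that subsequential limits of dipoles are potentials, not the converse. On $\Z$ the potential $n\mapsto|n|/2$ is not a pointwise limit of $\g_\o(\cdot,z_n)$ for any sequence $z_n\to\infty$. So route~(ii) does not cover all potentials; you genuinely need the mixture representation \cref{cor:PotentialsMixtureDipoles} (whose proof is independent of this lemma), together with a correct current bound for each dipole.

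The paper sidesteps both issues by working directly with the transient $h$-process. From \cref{claim:GhMu}, the expected number of directed crossings of $(v,w)$ by the $h$-process started at $\mu_h$ is $\G^h(\mu_h,v)\,p^h(v,w)=h(w)\,c_{vw}$, so the expected \emph{net} number of crossings equals $c_{vw}\bigl(h(w)-h(v)\bigr)$. The punchline is that expected net crossings are unchanged under loop-erasure (each erased cycle is equally likely to be traversed forward or backward), and the loop-erased walk, being a simple path, has net crossing number in $\{-1,0,1\}$ on every edge. This gives $|c_{vw}(h(w)-h(v))|\le 1$ in one stroke—no reduction to dipoles, no cut, no exhaustion.
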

\begin{remark} We will prove a stronger statement in \cref{cor:PotentialLipschitz}, but we find the proof of \cref{lem:lipschitz1} too enjoyable to omit.
\end{remark}
\begin{proof} 
By \cref{claim:GhMu}, for any $v,w\in S_h$ with $\{v,w\}\in E$, we have
$$ \G^h({\mu_h},v) p^h(v,w) = h(v) c_v {h(w) c_{v w} \over h(v) c_v } = h(w) c_{vw} \, .$$
The left hand side of the equality above is the expected number of crossings $N(v,w)$ of the directed edge $(v,w)$ by $\{Y_n\}$. Hence the expected   \emph{net} number of  crossings of the directed edge $(v,w)$ (that is, $N(v,w)-N(w,v)$) equals $c_{vw}(h(w)-h(v))$. 

Next we observe that in a transient network, the expected net  number of  crossings of a directed edge by the network random walk equals the expected net  number of crossings of the same edge by the loop-erased random walk (started at the same initial distribution) because each cycle of the simple random walk has equal probability of being traversed forward or backward.  Since the net number of   crossings of any edge by the loop erased random walk is $-1$, $0$ or $1$, we conclude that $c_{vw}|h(v)-h(w)|\leq 1$. \end{proof}

\subsection{Obtaining potentials from exhaustions}
Another way to generate potentials uses an {\bf exhaustion} of $G$, that is, a sequence of finite sets $\{D_n\}$ such that $o\in D_n$ and $D_n \subset D_{n+1}$, that satisfy $\cup_n D_n = V$. For each $n$, let 
$$\G_n(x,y)=  \sum_{k\geq 0} \P_x(X_k=y, \tau_{D_n^c} > k) \, $$ be the Green kernel of the random walk killed upon exiting the set $D_n$, and write $\g_{n}(x,y)=c_y^{-1} \G_n(x,y)$. The function
\be\label{def:hn} h_n(x) = \g_{n}(\o,\o) - \g_n(x,\o)=\P_x(\tau_{D_n^c} <\tau_\o)\, \g_{n}(\o,\o) \, ,\ee
is harmonic in $D_n \setminus \{\o\}$. It takes the value  $0$ at $\o$, and the value $\g_n(\o,\o)$ at any $x\in V\setminus D_n$. Furthermore, 
$$ \Delta h_n(\o)=\sum_{x \sim \o} c_{\o x} h_n(x) = \sum_{x \sim \o} p(\o,x) [\G_n(\o,\o) - \G_n(x,\o)] = \G_n(\o,\o) - \E_\o \G_n(X_1,\o) = 1\, .$$
Hence, any subsequential point-wise limit of $\{h_n\}$ is a potential. Next we prove a converse for networks with bounded edge resistances. 



\begin{lemma}\label{lem:exhaustion} Let $(G,c,\o)$ be a recurrent     rooted network. Assume that the edge resistances are bounded,
$R:=\sup_{e\in E}  r_e <\infty$, and let $h$ be a potential. Then there exists an exhaustion $\{D_n\}$ of $G$ such that 
$$ \forall  v\in V, \quad h(v) = \lim_{n\to \infty} h_{n}(v) \, ,$$
where $h_n$ are defined in \eqref{def:hn}.
\end{lemma}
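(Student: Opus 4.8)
The plan is to construct the exhaustion $\{D_n\}$ greedily so that on $D_n$ the function $h_n$ is forced to be close to $h$, and then show the limit must equal $h$. The key observation is that $h_n$ agrees with $h$ at $\o$ (both vanish there) and that $h_n$ is harmonic in $D_n\setminus\{\o\}$, whereas $h$ is harmonic in all of $V\setminus\{\o\}$; so $h-h_n$ is harmonic in $D_n\setminus\{\o\}$, vanishes at $\o$, and its behavior is controlled by its values on $\partial D_n$. Since $h_n$ equals the constant $\g_n(\o,\o)$ off $D_n$ and, by the formula $h_n(x)=\P_x(\tau_{D_n^c}<\tau_\o)\,\g_n(\o,\o)$, is increasing in $n$ with $\g_n(\o,\o)=\Reff(\o\leftrightarrow D_n^c)\to\infty$ by recurrence, the point is to make the set $D_n$ large enough, in an effective-resistance sense, that $h_n$ has already ``caught up'' to $h$ on a prescribed finite set before the boundary effects matter.

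First I would fix an arbitrary exhaustion $\{E_n\}$ and an increasing sequence of finite vertex sets $B_n$ with $\bigcup B_n=V$, and build $D_n\supseteq B_n$ inductively. The mechanism for choosing $D_n$: use \cref{lem:lipschitz1} together with the bounded-resistance hypothesis to get that $h$ grows at most linearly in graph distance, $h(x)\le R\cdot d_G(\o,x)$; combined with recurrence, $\g_n(\o,\o)=\Reff(\o\leftrightarrow D_n^c)\to\infty$, so for any target finite set $B$ and any $\e>0$ one can choose $D_n$ so large that $\g_n(\o,\o)$ exceeds $\max_{x\in B}h(x)$ by an arbitrary amount. Now on $D_n\setminus\{\o\}$, write $h(x)=\E_x[h(X_{\tau})\,;\,\tau_{D_n^c}<\tau_\o]$ where $\tau=\tau_\o\wedge\tau_{D_n^c}$ (using that $h(X_{\tau_\o\wedge t})$ is a bounded-below martingale, Fatou, and recurrence, as in the proof of \cref{claim:uniquenessDoesNotDependOnRoot}), and compare with $h_n(x)=\g_n(\o,\o)\,\P_x(\tau_{D_n^c}<\tau_\o)$. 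Thus
$$ h_n(x)-h(x) = \E_x\big[\,(\g_n(\o,\o)-h(X_{\tau}))\,;\,\tau_{D_n^c}<\tau_\o\,\big] - \E_x\big[h(X_\tau)\,;\,\tau_\o<\tau_{D_n^c}\big]\cdot 0, $$
so $h_n(x)-h(x)=\E_x[(\g_n(\o,\o)-h(X_\tau))\mathbf 1_{\tau_{D_n^c}<\tau_\o}]$, which is nonnegative once $\g_n(\o,\o)\ge\max_{\partial D_n}h$ but potentially large. To push this to equality in the limit I would instead track the lower bound: since $h_n\le h_{n+1}\le\cdots$ is not obviously below $h$, the cleaner route is to show $h_n(x)\le h(x)$ always fails to be useful, and instead bound $h_n(x)-h(x)$ from above by $\P_x(\tau_{D_n^c}<\tau_\o)\cdot(\g_n(\o,\o)-\min_{\partial D_n}h)$ and arrange, via the resistance estimate, that $\g_n(\o,\o)-\min_{\partial D_n}h$ stays bounded while $\P_x(\tau_{D_n^c}<\tau_\o)\to 0$ for each fixed $x$ — giving $\limsup_n h_n(x)\le h(x)$ — together with the trivial $\liminf_n h_n(x)\ge $ (any subsequential limit, which is a potential $\ge$?) lower bound. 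Since every subsequential limit of $h_n$ is a potential, if I can force the limit to be pointwise $\le h$ and recall that potentials cannot be strictly compared unless equal is false in general — so this is exactly where uniqueness-type control is needed.

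The honest approach, then, is the reverse: choose $D_n$ so that $h_n$ is forced to be close to $h$ on $B_n$ within $1/n$. Fix $x\in B_n$; by \cref{lem:lipschitz1} and bounded resistance, $h$ restricted to the graph-distance ball $B_G(\o,\rho)$ is bounded by $R\rho$; pick $\rho_n\to\infty$ slowly and $D_n$ with $B_G(\o,\rho_n)\subseteq D_n$. For the upper bound on $h_n(x)-h(x)$, use the optional stopping identity above: $h_n(x)-h(x)=\E_x[(\g_n(\o,\o)-h(X_\tau))\mathbf 1_{\{\tau_{D_n^c}<\tau_\o\}}]\le \g_n(\o,\o)\,\P_x(\tau_{D_n^c}<\tau_\o)$, and choose $D_n$ large enough (possible by recurrence and the explicit form $\g_n(\o,\o)\,\P_x(\cdot)=h_n(x)\le$ a crossing-count bound $\le\prod c_{x_{i-1}x_i}^{-1}$ along a path, exactly as in \cref{claim:GreenIsDipole}) — wait, that bound is what shows tightness, not smallness. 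Concretely: by \eqref{def:hn}, $h_n(x)=\P_x(\tau_{D_n^c}<\tau_\o)\,\Reff(\o\leftrightarrow D_n^c)$, and $\P_x(\tau_{D_n^c}<\tau_\o)\le \Reff(\o\leftrightarrow x)^{-1}\cdot$ something is not quite it; the clean identity is $h_n(x)=\Reff(\o\leftrightarrow D_n^c)-\Reff(x\leftrightarrow D_n^c)$ via the voltage interpretation, and since resistances are bounded, $\Reff(\o\leftrightarrow D_n^c)-\Reff(x\leftrightarrow D_n^c)$ is bounded in $n$ by $R\,d_G(\o,x)$ by Rayleigh, and is nondecreasing, hence converges. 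So $h_n(x)\uparrow h_\infty(x)$ for every $x$, $h_\infty$ is a potential, and it remains to identify $h_\infty=h$: this is done by choosing the $D_n$ adaptively so that the comparison $\E_x[(\g_n(\o,\o)-h(X_\tau))\mathbf 1]\le \e_n$ holds, which is where I expect the main obstacle — one needs the boundary $\partial D_n$ to lie in a region where $h$ is within $o(\g_n(\o,\o))$ of $\g_n(\o,\o)$, i.e. $h\approx\Reff(\o\leftrightarrow\cdot)$ near $\partial D_n$; choosing $D_n$ as a sublevel set of $h$ itself, $D_n=\{h<t_n\}\cap(\text{finite truncation})$ with $t_n\uparrow\infty$ chosen so that $t_n/\g_n(\o,\o)\to 1$, should close the loop, using $h\le\Reff(\o\leftrightarrow D_n^c)+O(R)$ on $\partial D_n$ by Lipschitzness and the definition of the sublevel set.

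The main obstacle is precisely this last identification step: ensuring $h_\infty=h$ rather than merely some potential. The resolution I anticipate is to take $D_n$ to be (a finite truncation of) the sublevel set $\{v:h(v)<t_n\}$ — which is finite for each $t_n$ since $h$ is proper when resistances are bounded, by \cref{lem:lipschitz1} forcing $h(x)\to\infty$ as $x\to\infty$ — and to check that for such a choice, $\Reff(\o\leftrightarrow D_n^c)=t_n(1+o(1))$ and $h(X_\tau)=t_n(1+o(1))$ on the event $\{\tau_{D_n^c}<\tau_\o\}$, whence $h_n(x)-h(x)=\E_x[(\Reff(\o\leftrightarrow D_n^c)-h(X_\tau))\mathbf 1_{\tau_{D_n^c}<\tau_\o}]=o(t_n)\,\P_x(\tau_{D_n^c}<\tau_\o)\to 0$ because $h_n(x)\le h(x)+O(R)$ stays bounded so $\P_x(\tau_{D_n^c}<\tau_\o)\to 0$. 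Verifying that $h$ is indeed proper (finite sublevel sets) is the one place the bounded-resistance hypothesis is genuinely essential and should be stated first.
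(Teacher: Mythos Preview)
Your overall strategy---taking $D_n$ to be (essentially) a sublevel set of $h$ and comparing $h_n$ to $h$ via the optional-stopping identity
\[
h_n(x)-h(x)=\E_x\bigl[(\g_n(\o,\o)-h(X_\tau))\,\mathbf 1_{\{\tau_{D_n^c}<\tau_\o\}}\bigr]
\]
---matches the paper's proof. But there is a genuine error and a genuine gap.

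The error is your claim that bounded edge resistances force $h$ to be proper. \cref{lem:lipschitz1} gives $h(x)\le R\,d_G(\o,x)$, an \emph{upper} bound; it says nothing about $h(x)\to\infty$. The potential $h(n)=\max(n,0)$ on $(\Z,\mathbf 1,0)$ is a counterexample: edge resistances are all $1$, yet $\{h\le 0\}=\{0,-1,-2,\dots\}$ is infinite. So the ``finite truncation'' you mention in passing is not a technicality---it is essential, and you cannot dispense with it.

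The gap is that once you do truncate, say $D_n=\{h\le t_n\}\cap B_n$ for a large ball $B_n$, the exit boundary has two pieces: the sublevel-set part, where $h(X_\tau)\in[t_n,t_n+R]$ and your estimate $|\g_n(\o,\o)-h(X_\tau)|=O(R)$ is valid (once you know $\g_n(\o,\o)\approx t_n$), and the part on $\partial B_n$, where $h(X_\tau)$ may be arbitrarily small. On the latter the integrand is of order $t_n$, not $O(R)$, and you give no mechanism to control it. The paper fixes this by taking $t_n=n$ and choosing $B_n$ so large (possible by recurrence) that $\P_\o(\tau_{B_n^c}<\tau_\o^+)\le (c_\o n^2)^{-1}$; the probability of exiting through $\partial B_n$ is then $O(n^{-2})$, small enough to absorb the factor $t_n=n$. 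A separate optional-stopping argument with the martingale $c_\o h(X_{t\wedge\tau})+\mathbf 1_{\{t=0\}}$ started at $\o$ then yields $n-1\le \g_n(\o,\o)\le n+R$, after which your displayed identity (applied with $X_0=v$) finishes the proof.
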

\begin{proof} By recurrence, for each $n$ we can choose a ball $B_n$ centered at $\o$ of large enough radius, such that  
$$\P_\o( \tau_{B_n^c} < \tau_\o^+) \leq  \frac{1}{c_\o n^2}   \, .$$
Define
$$ D_n := \{ v \in B_n : h(v) \leq n\} \quad \text{and} \quad  \tau := \tau_\o^+ \wedge \tau_{D_n^c} \,.$$
 Start the network random walk at  $X_0=\o$, and consider the martingale
$M_t:=c_\o h(X_{\tau \wedge t })+{\bf 1}_{\{t=0\}}$ which is bounded by \cref{lem:lipschitz1}. Optional stopping yields
$$1=M_0=\E_\o M_{\tau} \ge  \P_\o\big(h(X_\tau) > n\big)c_\o n   \, ,$$
and by  \cref{lem:lipschitz1},
$$1=M_0=\E_\o M_{\tau} \le \P_\o(X_\tau \in D_n^c) c_\o (n+R)\,. $$

Thus,
$$\frac{1}{c_\o(n+R)} \le \P_\o(X_\tau \in D_n^c) \le \P_\o\big(h(X_\tau) > n\big) + \P_\o( X_\tau \in B_n^c) \le  \frac{1}{c_\o n}+\frac{1}{c_\o n^2}=\frac{n+1}  {c_\o n^2} \,.$$
Since the number of visits to $o$ by the random walk is a geometric random variable,
\be\label{eq:gnooBound} 
n+R \geq g_n(\o,\o)=\big(c_\o \P_\o(X_\tau \in D_n^c)\big)^{-1}  \ge \frac{n^2}{n+1} \ge n-1\,. \ \ee
  
Fix $v \in V$ of graph distance $\ell$ from $\o$, so that $p^\ell(\o,v) >0$. Suppose that $n$ is large enough so  that  $D_n$ contains the shortest path from $\o$ to $v$.  
Observe that
\be \label{eq:vbnd} {1 \over c_\o n^2} \geq \P_\o(X_\tau \in B_n^c) \geq p^\ell(\o,v) \P_v(X_\tau \in B_n^c)\, . 
\ee
Applying optional stopping 
to the bounded martingale $h(X_{t \wedge \tau})$
gives
\be \label{eq:simpopt} h(v)= \E_v(h(X_\tau)) \ge n \P_v(h(X_\tau) > n)   \,. 
\ee
Finally, consider  the bounded martingale
$$\Gamma_t:=h(X_{t \wedge \tau})-h_n(X_{t \wedge \tau}) \,,$$
where we take $X_0=v$. Observe that for $x \in V \setminus D_n$ that has a neighbor in $D_n$,  we have that $h(x) \le n+R$ by \cref{lem:lipschitz1}, and  $h_n(x) =\g_n(\o,\o) \in [n-1,n+R]$ by \eqref{def:hn} and \eqref{eq:gnooBound}. Therefore,
\begin{eqnarray*} |h(v)-h_n(v)|=|\Gamma_0|=|\E_v(\Gamma_\tau)| &\le& (R+1)\,\P_v(h(X_\tau) > n)+(n+R)\, \P_v(X_\tau \in B_n^c) \\[1ex]  &\le& \frac{(R+1) h(v)}{n} +\frac{(n+R) } {p^\ell(\o,v) c_\o n^2} \, ,\end{eqnarray*}
using  \eqref{eq:vbnd} and \eqref{eq:simpopt} in the last step. This concludes the proof.
\end{proof}

\begin{cor} \label{cor:exhaustion} In the setting of \cref{lem:exhaustion}, the network random walks started at $v_1\in S_h$ conditioned to exit $D_n$ before reaching $\o$, converge weakly to the $h$-process as $n\to \infty$.
\end{cor}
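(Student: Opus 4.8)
The plan is to deduce \cref{cor:exhaustion} from \cref{lem:exhaustion} by translating pointwise convergence of the functions $h_n$ into weak convergence of the conditioned walks, via the explicit path formula \eqref{eq:hprocessvsSRWSimple} applied at each finite level. The key observation is that, just as the $h$-process has transition probabilities $p^h(x,y)=p(x,y)h(y)/h(x)$, the network random walk started at $v_1$ and conditioned on the event $\{\tau_{D_n^c}<\tau_\o\}$ is itself a Doob $h$-transform: on $D_n\setminus\{\o\}$ its transition probabilities are $p^{(n)}(x,y) = p(x,y)\,\phi_n(y)/\phi_n(x)$, where $\phi_n(x)=\P_x(\tau_{D_n^c}<\tau_\o)$ is the discrete-harmonic function equal to $1$ on $D_n^c$ and $0$ at $\o$. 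By \eqref{def:hn}, $\phi_n = h_n/\g_n(\o,\o)$, so $\phi_n$ and $h_n$ define the same $h$-transform; hence the conditioned walk's law on cylinder sets of paths staying in $D_n\setminus\{\o\}$ is given by the telescoping product $\P_{v_1}(\gamma)\,h_n(v_k)/h_n(v_1)$, exactly the analogue of \eqref{eq:hprocessvsSRWSimple}.

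First I would fix a finite path $\gamma=(v_1,v_2,\dots,v_k)$ in $S_h$ starting at $v_1$ (by \cref{rmk:Sh} such a path automatically avoids $\o$). For all $n$ large enough that $D_n$ contains $\gamma$ — which happens eventually since $\{D_n\}$ is an exhaustion and, by the construction in \cref{lem:exhaustion}, $D_n$ contains the shortest path from $\o$ to each fixed vertex for $n$ large, and more generally any fixed finite set once $n$ is large (using $h(v)\le n$ on the finitely many vertices of $\gamma$ and $B_n\uparrow V$) — the conditioned walk assigns $\gamma$ the probability $\P_{v_1}(\gamma)\,h_n(v_k)/h_n(v_1)$. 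By \cref{lem:exhaustion}, $h_n(v_k)\to h(v_k)$ and $h_n(v_1)\to h(v_1)>0$ (note $v_1\in S_h$ so the denominator is bounded away from $0$), whence this probability converges to $\P_{v_1}(\gamma)\,h(v_k)/h(v_1)=\P^h_{v_1}(\gamma)$ by \eqref{eq:hprocessvsSRWSimple}. Since cylinder sets of this form generate the product $\sigma$-algebra on path space and the limiting measure is a probability measure (it is the law of the $h$-process, which is honest), convergence of finite-dimensional distributions gives weak convergence, completing the proof.

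The one point requiring a little care — and the place I expect to be the main (minor) obstacle — is the passage from convergence of cylinder probabilities to genuine weak convergence of the laws on the sequence space $S_h^{\mathbb{N}}$ (with the product topology). The subtlety is that the conditioned walk is only defined as long as it has not exited $D_n$ or hit $\o$; after exiting $D_n$ it has no canonical continuation. The clean way around this is to note that for a fixed finite cylinder $\{(\omega_1,\dots,\omega_k)=\gamma\}$ with $\gamma$ a path in $S_h$, the event depends only on the first $k$ steps, and for $n$ large the conditioned walk has not yet been forced to stop, so its probability is unambiguously $\P_{v_1}(\gamma)h_n(v_k)/h_n(v_1)$; since the $h$-process is transient and escapes to infinity (\cref{lem:hProcessInfty}), its law is supported on genuinely infinite trajectories, and a standard argument (e.g. the law of the limit being tight, being a probability measure, and agreeing with the finite-dimensional limits) upgrades convergence of finite-dimensional distributions to weak convergence. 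I would state this last step briefly, citing \eqref{eq:hprocessvsSRWSimple} and \cref{lem:exhaustion} as the substantive inputs and leaving the topological bookkeeping to the reader.
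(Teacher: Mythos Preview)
Your proposal is correct and follows essentially the same approach as the paper: compute the cylinder probability of the conditioned walk as $\P_{v_1}(\gamma)\,h_n(\gamma_\ell)/h_n(\gamma_1)$ via the Doob transform, and then invoke \cref{lem:exhaustion} to pass to the limit $\P^h_{v_1}(\gamma)$. The paper's proof is a one-line version of exactly this computation, leaving implicit both the identification of the conditioned walk as the $h_n$-transform and the topological passage from finite-dimensional convergence to weak convergence that you spell out.
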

\begin{proof} Let $\gamma=(\gamma_1,\dots,\gamma_\ell)$ be a path in $S_h$ with $\gamma_1=v_1$. 
Then
$$ \P_{v_1}(\gamma \mid \tau_{D_n^c} < \tau_{\o}) = \P_{v_1}(\gamma) {h_n(\gamma_\ell) \over h_n(\gamma_1)} \to \P^h(\gamma) \, ,$$
as required.    
\end{proof}

Next we improve \cref{lem:lipschitz1}, replacing edge resistances by \emph{effective resistances}. For background on effective resistance see \cite[Chapter 2]{TheBook}. In recurrent networks, one can define the effective resistance $\Reff(x \lr y)$ between two vertices $x\neq y$ as
$$ \Reff(x \lr y) = \g_x(y,y) = \bigl(c_y P_y(\tau_x <\tau_y^+)\bigr)^{-1} \, .$$

\begin{lemma}\label{lem:1LipschitzReff} Let $(G,c,\o)$ be a rooted network (not necessarily recurrent). Assume that $D\subset V$ satisfies $\o\in D$ and $\P_\o(\exists n \, \, X_n \not \in D)=1$. Then the Green density $\g_{D^c}(\cdot,\o)$ of the random walk killed upon exiting $D$ is $1$-Lipschitz with respect to the effective resistance metric, that is, 
$$ \forall \, x,y\in D \qquad |\g_{D^c}(x,\o) - \g_{D^c}(y,\o)| \leq \Reff(x \lr y) \, .$$
\end{lemma}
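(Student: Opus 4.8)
The plan is to reduce the statement to a two-vertex computation by combining the harmonicity of $\g_{D^c}(\cdot,\o)$ off $\{\o\}$ with a maximum-principle argument, and then to express the relevant increment as a probability that is itself bounded using the effective-resistance interpretation. First I would fix $x,y\in D$ and set $F(\cdot) = \g_{D^c}(\cdot,\o)$. Recall that $F$ is harmonic on $D\setminus\{\o\}$, vanishes off $D$ (and at no interior point is it larger than $F(\o)=\g_{D^c}(\o,\o)$, since $F$ is superharmonic on all of $V$ with its only "source" at $\o$); in particular $0\le F\le F(\o)$ everywhere. Write $F(x)-F(y)$; without loss of generality $F(x)\ge F(y)$, so I want to bound $F(x)-F(y)\le \Reff(x\lr y)$.

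The key step is to interpret $F(x)$ probabilistically via the random walk and to compare the walk from $x$ with the walk from $y$ using a coupling through the hitting time of $\{x,y\}$. Concretely, $F(x) = \g_{D^c}(x,\o)$ is (by reversibility) proportional to $\P_x(\tau_\o < \tau_{D^c})$ times $F(\o)$: indeed $\g_{D^c}(x,\o) = \P_x(\tau_\o<\tau_{D^c})\,\g_{D^c}(\o,\o)$, exactly as in \eqref{def:hn}. Starting the walk from $y$, decompose according to whether it hits $x$ before $\{\o\}\cup D^c$: by the strong Markov property,
$$\P_y(\tau_\o<\tau_{D^c}) = \P_y(\tau_x < \tau_\o\wedge\tau_{D^c})\,\P_x(\tau_\o<\tau_{D^c}) + \P_y\big(\tau_\o<\tau_x\wedge\tau_{D^c}\big)\cdot 1 + 0,$$
where the last (zero) term is the event of hitting $D^c$ before $\{x,\o\}$. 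Hence
$$F(x)-F(y) = \g_{D^c}(\o,\o)\Big[\P_x(\tau_\o<\tau_{D^c})\big(1-\P_y(\tau_x<\tau_\o\wedge\tau_{D^c})\big) - \P_y(\tau_\o<\tau_x\wedge\tau_{D^c})\Big].$$
Bounding $\P_x(\tau_\o<\tau_{D^c})\le 1$ and dropping the nonnegative subtracted term, this is at most $\g_{D^c}(\o,\o)\,\P_y(\tau_x>\tau_\o\wedge\tau_{D^c})$, i.e. at most $\g_{D^c}(\o,\o)\,\P_y(\tau_x > \tau_{\{\o\}\cup D^c})$.

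It then remains to show $\g_{D^c}(\o,\o)\,\P_y(\tau_x > \tau_{\{\o\}\cup D^c}) \le \Reff(x\lr y)$. The plan here is to recognize the left side as a Green-density quantity for the walk killed on $\{x\}\cup D^c$ and to use monotonicity of effective resistance under adding killing: killing on the larger set $\{\o\}\cup D^c$ only decreases hitting probabilities, so $\g_{D^c}(\o,\o)\,\P_y(\tau_x>\tau_{\{\o\}\cup D^c}) \le \g_{\{x\}^c}(\o,\o)\,\P_y(\tau_x>\tau_\o)\le \g_{\{x\}^c}(y,y)\cdot 1$ — no, more carefully, I would instead observe that $\g_{D^c}(\o,\o) = \big(c_\o\P_\o(\tau_{D^c}<\tau_\o^+)\big)^{-1}$ and use the network-reduction/shorting inequality $\P_\o(\tau_{D^c}<\tau_\o^+)\ge \P_\o(\tau_x<\tau_\o^+)\,\P_x(\tau_{D^c}\wedge\ldots)$, landing on the identity $\Reff(x\lr y)=\g_x(y,y)=\big(c_y\P_y(\tau_x<\tau_y^+)\big)^{-1}$ from the text.

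I expect the main obstacle to be the final inequality $\g_{D^c}(\o,\o)\,\P_y(\tau_x > \tau_{\{\o\}\cup D^c}) \le \Reff(x\lr y)$: getting the constants and the direction of the monotonicity exactly right. The cleanest route is probably to avoid $D^c$ entirely in the last step by noting $\P_y(\tau_x>\tau_{\{\o\}\cup D^c})\le \P_y(\tau_x>\tau_\o)$ and $\g_{D^c}(\o,\o)\le \g_{\{\o\}}\text{-type bounds}$ are in the wrong direction, so instead I would use the series/parallel law: condition the walk from $\o$ to reach $D^c$ on its first visit to $\{x,y\}$, giving $\Reff(\o\lr D^c)^{-1}\ge$ a product that isolates the $x\!-\!y$ traversal, and then rearrange. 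A self-contained alternative is to write $F(x)-F(y)$ directly as a signed Green-function expression $\g_{D^c}(\o,x)-\g_{D^c}(\o,y)$ (using symmetry) and recognize $\big(\g_{D^c}(\o,x)-\g_{D^c}(\o,y)\big)/F(\o)$ as the potential difference of a unit current flowing from $\o$ to $D^c$ restricted appropriately, whose value across the pair $\{x,y\}$ is at most $\Reff(x\lr y)$ by the triangle inequality for the resistance metric together with Thomson's principle; I would compare these two routes and pick whichever gives the tightest bookkeeping.
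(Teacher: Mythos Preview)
Your decomposition of $\P_y(\tau_\o<\tau_{D^c})$ via the first visit to $x$ is fine, but the step where you ``drop the nonnegative subtracted term'' $\P_y(\tau_\o<\tau_x\wedge\tau_{D^c})$ is fatal: the resulting inequality
\[
\g_{D^c}(\o,\o)\,\P_y\big(\tau_x>\tau_\o\wedge\tau_{D^c}\big)\;\le\;\Reff(x\lr y)
\]
is simply false. Take $G=\mathbb{Z}$ with unit conductances, $\o=0$, $x=-1$, $y=2$, and $D=\{-N,\dots,N\}$. Every path from $y=2$ to $x=-1$ passes through $\o=0$, so $\P_y(\tau_x>\tau_\o\wedge\tau_{D^c})=1$; meanwhile $\g_{D^c}(\o,\o)=(N+1)/2\to\infty$, whereas $\Reff(x\lr y)=3$. (One checks $F(-1)=N/2>(N-1)/2=F(2)$, so this is indeed the ordering $F(x)\ge F(y)$.) The term you discarded is exactly what cancels the large factor $\g_{D^c}(\o,\o)$ when $\o$ lies ``between'' $y$ and $x$; none of the salvage routes you sketch (monotonicity of resistance, series/parallel, Thomson plus the resistance triangle inequality) recovers what was lost, and the Cauchy--Schwarz/energy version only yields $|F(x)-F(y)|\le\sqrt{\Reff(\o\lr D^c)\,\Reff(x\lr y)}$, which again carries the unwanted $\g_{D^c}(\o,\o)$.

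The paper's argument sidesteps this by decomposing the walk from $x$ (not from $y$) according to its first visit to $y$: counting visits to $\o$ before and after $\tau_y$ gives
\[
\G_{D^c}(x,\o)\;\le\;\G_y(x,\o)+\G_{D^c}(y,\o)\,,
\]
so that $\g_{D^c}(x,\o)-\g_{D^c}(y,\o)\le \g_y(x,\o)=\g_y(\o,x)\le \g_y(x,x)=\Reff(x\lr y)$. The crucial difference is that the ``excess'' term $\g_y(x,\o)$ is already a Green density for the walk killed at $y$, bounded directly by the resistance with no appearance of $\g_{D^c}(\o,\o)$.
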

\begin{proof} For the random walk $\{X_n\}$ started at $x$, we have
$$ \sum_{t=0}^{\tau_{D^c}} {\bf 1}_{\{X_t = \o\}} \leq \sum_{t=0}^{\tau_{y}} {\bf 1}_{\{X_t = \o\}} + \sum_{t=\tau_y}^{\tau_{y} \vee \tau_{D^c}} {\bf 1}_{\{X_t = \o\}} \, .$$
Taking expectations and using the strong Markov property gives
$$ \G_{D^c}(x,\o) \leq \G_y(x,\o) + \G_{D^c}(y,\o) \, ,$$
which dividing by $c_\o$ yields
$$ \g_{D^c}(x,\o) \leq \g_y(x,\o)+ \g_{D^c}(y,\o) \, .$$
Hence
$$ \g_{D^c}(x,\o) - \g_{D^c}(y,\o) \leq \g_y(x,\o) = \g_y(\o,x) \leq \g_y(x,x) = \Reff(x \lr y) \, ,$$
giving the desired result by interchanging the roles of $x$ and $y$.
\end{proof}

\begin{cor}\label{cor:PotentialLipschitz} Let $(G,c,\o)$ be recurrent rooted network and let $h:V\to [0,\infty)$ be a potential. Then
$$ \forall \, x,y\in D \qquad |h(x)-h(y)| \leq \Reff(x \lr y) \, .$$
\end{cor}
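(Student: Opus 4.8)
The plan is to deduce the bound for an arbitrary potential from the corresponding bound for the Green densities $\g_\o(\cdot,y)$, which will come almost for free from \cref{lem:1LipschitzReff}.

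\emph{Step 1: every dipole $\g_\o(\cdot,y)$ is $1$-Lipschitz with respect to $\Reff$.} Fix $y\neq\o$ and apply \cref{lem:1LipschitzReff} with the choice $D:=V\setminus\{y\}$; its hypotheses hold because $\o\in D$ and, by recurrence, the walk started at $\o$ a.s. hits $y$ and hence a.s. leaves $D$. The conclusion is that $\g_y(\cdot,\o)$, the Green density for the walk killed at $y$, is $1$-Lipschitz for $\Reff$. To transfer this to $\g_\o(\cdot,y)$ I will use that $s:=\g_\o(\cdot,y)+\g_y(\cdot,\o)$ is harmonic on all of $V$: by \cref{claim:GreenIsDipole} its Laplacian is $(\1_\o-\1_y)+(\1_y-\1_\o)=0$. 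Both summands are bounded, by $\g_\o(y,y)=\Reff(\o\lr y)$ and $\g_y(\o,\o)=\Reff(\o\lr y)$ respectively, so $s$ is a bounded harmonic function on a recurrent network, hence constant; evaluating at $\o$ gives $s\equiv\Reff(\o\lr y)$. Therefore $\g_\o(\cdot,y)=\Reff(\o\lr y)-\g_y(\cdot,\o)$, and for all vertices $x,x'$,
$$|\g_\o(x,y)-\g_\o(x',y)|=|\g_y(x',\o)-\g_y(x,\o)|\le\Reff(x\lr x').$$

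\emph{Step 2: pass from dipoles to potentials.} The set of functions $V\to\R$ that vanish at $\o$ and are $1$-Lipschitz for the metric $\Reff$ is convex and closed under pointwise limits. If the potential $h$ is unique, then by \cref{claim:compact} one has $h=\lim_n\g_\o(\cdot,y_n)$ pointwise for any $y_n\to\infty$, so the desired bound for $h$ follows by passing to the limit in Step 1. In general I would invoke the representation of potentials as mixtures of Green densities (\cref{cor:PotentialsMixtureDipoles}): since $h$ is then a pointwise limit of convex combinations of the functions $\g_\o(\cdot,y)$, it again inherits the $1$-Lipschitz bound.

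Step 1 is routine given \cref{lem:1LipschitzReff}; the real content is the reduction in Step 2. For a unique potential it is immediate from \cref{claim:compact}, but when potentials are not unique one genuinely needs a general potential to be a mixture (or at least a pointwise limit of convex combinations) of the dipoles $\g_\o(\cdot,y)$ — extremal potentials are single-dipole limits, but the rest are not, so something like \cref{cor:PotentialsMixtureDipoles}, or a Krein--Milman argument reducing to extremal potentials, seems unavoidable. An alternative finish bypassing \cref{cor:PotentialsMixtureDipoles} would use the exhaustion functions $h_n=\g_{D_n^c}(\o,\o)-\g_{D_n^c}(\cdot,\o)$ of \eqref{def:hn}, which are $1$-Lipschitz for $\Reff$ straight from \cref{lem:1LipschitzReff} with $D=D_n$; that route would instead require showing that every potential is a pointwise limit of such $h_n$ along a suitable exhaustion, which — unlike in \cref{lem:exhaustion} — ought not to require bounded edge resistances.
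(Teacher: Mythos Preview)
Your proof is correct and takes a genuinely different route from the paper's. The paper applies \cref{lem:1LipschitzReff} to \emph{finite} exhaustion sets $D_n$, so that the approximants $h_n$ of \eqref{def:hn} are $1$-Lipschitz for $\Reff$, and then invokes \cref{lem:exhaustion} to pass to $h$; since that lemma assumes bounded edge resistances, an edge-subdivision trick is needed to handle the general case. You instead apply \cref{lem:1LipschitzReff} to the \emph{cofinite} set $D=V\setminus\{y\}$ and combine it with the identity $\g_\o(\cdot,y)+\g_y(\cdot,\o)\equiv\Reff(\o\lr y)$ (a nice observation) to make every dipole $1$-Lipschitz directly, then pass to arbitrary potentials via \cref{cor:PotentialsMixtureDipoles}. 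The paper actually anticipates your Step~2 in the remark immediately following its proof, noting that \cref{cor:PotentialsMixtureDipoles} could replace \cref{lem:exhaustion}; what your argument buys is that it bypasses the bounded-resistance hypothesis and the subdivision trick entirely. Two small clean-ups: \cref{lem:1LipschitzReff} only gives the bound for $x,x'\in V\setminus\{y\}$, so the boundary case $x'=y$ needs the one-line check $\g_y(x,\o)=\g_y(\o,x)\le\g_y(x,x)=\Reff(x\lr y)$; and in Step~2 no limiting is needed, since for fixed $x,x'$ one may take a single finite $D$ containing both and the neighbors of $\o$, whence $|h(x)-h(x')|\le\E_{\mu_h}\big|\g_\o(x,Y_L)-\g_\o(x',Y_L)\big|\le\Reff(x\lr x')$ directly.
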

\begin{proof} If the edge resistances are bounded, then this follows directly from Lemmas \ref{lem:exhaustion} and \ref{lem:1LipschitzReff}. Otherwise, we consider the enhanced network $(G^*, c^*) = (V^*, E^*, c^*)$ where $V\subset V^*$ and every edge $e\in E$ with $r_e>1$ is split into $\lceil r_e \rceil$ edges of resistance $r_e/\lceil r_e \rceil$. Observe that for every $x,y\in V$, the effective resistance $\Reff(x \lr y)$ in $(G^*, c^*)$ equals the effective resistance in $(G, c)$. The potential $h$ has a unique extension to a potential $h^*$ on $(G^*, c^*)$ and the result follows from the case of bounded edge resistances.
\end{proof}
\begin{remark}
In the proof above, we could replace \cref{lem:exhaustion}  by     
\cref{cor:PotentialsMixtureDipoles} below.
\end{remark}

\begin{remark} \cref{cor:PotentialLipschitz} can also be obtained from \cite[Proposition 3.6]{BereEngel} together with \cite[Proposition 3.2 (ii)]{BereEngel}.
\end{remark}

\subsection{Preliminaries on transient chains}\label{sec:transient} Since the $h$-process is transient (\cref{claim:GhMu}), we present, following Dynkin \cite{Dynkin69}, a general property of transient chains.   Consider a transient Markov chain on a countable state space $S$ with transition probabilities $\q(x,y)$, and denote by $\G(x,y)$ the Green kernel
$$ \G(x,y) = \sum_{n=0}^\infty \q^n(x,y) \, .$$
Let $\mu$ be a finitely supported probability measure on $S$ representing the starting state for the chain. We write 
$$ \G(\mu,y) = \sum_{x \in S} \mu(x) \G(x,y)$$
and assume that $\mu$ satisfies
\be\label{def:standardMeasure} \G(\mu,y)>0  \hbox{  for all  } y\in S \, .\ee
The {\bf Martin kernel} $M(x,y)$ is now defined as 
\be\label{eq:MartinDef} M(x,y) := \frac{\G(x,y)}{\G(\mu, y)} \, .\ee
The strong Markov property implies that $\G(\mu,y)\geq \P_\mu(\tau_x < \infty) \G(x,y)$, hence
\be \label{eq:Minequality} M(x,y)  \leq \frac{1}{\P_\mu(\tau_x < \infty)} = \frac{\G(x,x)}{\G(\mu,x)} \, .\ee

 Let $D \subset V$ be a finite set and let $L_D$ be the last visit to $D$, that is,
$$ L_D := \sup \{\, n \geq 0 :  Y_n \in D\,  \} \, ,$$
with $L_D:=-\infty$ when this set is empty. Clearly, $\P_x(L_D < \infty)=1$ for any $x\in S$. 

\begin{lemma}\label{lem:ExpectationEq1}  Let $x\in S$ and let $D \subset S$ be a finite set with $x \in D$ and $\mu(D)=1$. Then writing $L=L_D$ we have
$$ \E_\mu [ M(x, Y_{L})] = 1 \, .$$
\end{lemma}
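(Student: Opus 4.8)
The plan is to compute $\E_\mu[M(x,Y_L)]$ by summing over all possible values $y\in D$ of $Y_L$, weighted by the probability that the last visit to $D$ happens at state $y$. Concretely, $\P_\mu(Y_L = y) = \P_\mu(\text{the chain visits } y \text{ and never returns to } D \text{ afterwards, with this being the last } D\text{-visit})$. The natural decomposition is $\P_\mu(Y_L = y) = \sum_{n\ge 0}\P_\mu(Y_n = y,\ Y_m\notin D \text{ for all } m>n)$. By the Markov property at time $n$, $\P_\mu(Y_n=y,\ Y_m\notin D\ \forall m>n) = \P_\mu(Y_n=y)\cdot \P_y(\tau_D^+ = \infty)$, where $\tau_D^+$ is the first return time to $D$ (strictly positive). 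Summing over $n$ gives $\P_\mu(Y_L=y) = \G(\mu,y)\,\P_y(\tau_D^+=\infty)$.

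Next I would plug this into the expectation:
$$\E_\mu[M(x,Y_L)] = \sum_{y\in D} M(x,y)\,\P_\mu(Y_L=y) = \sum_{y\in D}\frac{\G(x,y)}{\G(\mu,y)}\cdot \G(\mu,y)\,\P_y(\tau_D^+=\infty) = \sum_{y\in D}\G(x,y)\,\P_y(\tau_D^+=\infty).$$
So it remains to show $\sum_{y\in D}\G(x,y)\,\P_y(\tau_D^+=\infty) = 1$. This last identity should follow from a parallel last-exit decomposition applied to the chain started at $x$ itself (rather than from $\mu$): since $x\in D$ and the chain is transient, $L_D < \infty$ a.s. and $Y_{L_D}\in D$, so $1 = \P_x(L_D<\infty) = \sum_{y\in D}\P_x(Y_{L_D}=y)$. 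By the same computation as above with $\mu$ replaced by the point mass at $x$, $\P_x(Y_{L_D}=y) = \G(x,y)\,\P_y(\tau_D^+=\infty)$. Summing over $y\in D$ yields exactly $\sum_{y\in D}\G(x,y)\,\P_y(\tau_D^+=\infty)=1$, which closes the argument.

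I would present the last-exit decomposition $\P_\nu(Y_{L_D}=y) = \G(\nu,y)\,\P_y(\tau_D^+=\infty)$ once as a lemma-within-the-proof (for a general starting distribution $\nu$ supported on states from which $D$ is a.s. visited — here both $\mu$ and $\delta_x$ qualify since $x\in D$ and $\mu(D)=1$), then apply it twice: once with $\nu=\mu$ to evaluate the expectation, and once with $\nu=\delta_x$ together with $\P_x(L_D<\infty)=1$ to identify the resulting sum as $1$. The only subtlety — and the main thing to get right — is the bookkeeping in the last-exit decomposition: making sure the events $\{Y_n=y,\ Y_m\notin D\ \forall m>n\}$ are disjoint over $n$ (they are, because they pin down the exact index of the last $D$-visit) and that their union over $n\ge 0$ and $y\in D$ is the full event $\{L_D<\infty\}$, which has probability $1$ by transience of the chain and finiteness of $D$. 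Everything else is a routine application of the Markov property and the definitions of $\G$ and $M$.
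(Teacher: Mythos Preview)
Your proposal is correct and follows essentially the same approach as the paper. The paper's proof is the same last-exit decomposition $\P_\nu(Y_L=y)=\G(\nu,y)\,\P_y(L=0)$ applied with $\nu=\mu$ and $\nu=\delta_x$ (your $\P_y(\tau_D^+=\infty)$ is exactly their $\P_y(L=0)$), with the only cosmetic difference that the paper packages the two applications into the single identity $M(x,y)=\P_x(Y_L=y)/\P_\mu(Y_L=y)$ before summing.
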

\begin{proof} For any $y\in D$ and $k\geq 0$, we have
$$ \P_x(Y_L = y, L=k) = q^k(x,y) \P_y(L=0) \, .$$
Therefore
$$ \P_x(Y_L=y) = \G(x,y) \P_y(L=0) \, ,$$
whence
\be\label{eq:exit2}
\P_\mu(Y_L=y) = \G(\mu,y) \P_y(L=0) \, .\ee
Thus 
$$ M(x,y) = \frac{\P_x(Y_L=y)}{\P_\mu(Y_L=y)} \, ,$$
provided that $\P_\mu(Y_L=y)>0$. We conclude that 
$$ \E_\mu [M(x,Y_L)] = \sum_{y\in D} \P_\mu(Y_L=y) M(x,y) = \sum_y \P_x(Y_L=y) = 1 \, .$$
\end{proof}

\subsection{Representing potentials as mixture of dipoles}

The next theorem shows that every potential, when restricted to a finite set of vertices $D$ containing the neighbors of $\o$, is a mixture of dipoles $\g_\o(\cdot,v)$ with $v\in D$. 

\begin{thm}\label{cor:PotentialsMixtureDipoles} Let $h$ be a potential for a  recurrent rooted network $(G,c,\o)$. Let $\{Y_n\}_{n \ge 0}$ be the corresponding $h$-process with initial distribution $\mu_h$ defined in \eqref{def:mu}. Let $D\subset V$ be a finite set containing $\o$ such that $\mu_h(D)=1$.  Then for all $x\in D$, we have
\be\label{eq:PotMixture} h(x) = \E_{\mu_h} \g_o(x, Y_L) \, ,\ee
where $L=L_D := \sup \{\, n \geq 0 :  Y_n \in D\, \}$.
\end{thm}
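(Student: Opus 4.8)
The plan is to recognize the right-hand side of \eqref{eq:PotMixture} as an instance of the abstract identity in \cref{lem:ExpectationEq1}, applied to the transient $h$-process. First I would set up the dictionary: take the transient chain of \cref{sec:transient} to be $\{Y_n\}$ on $S_h$, with initial distribution $\mu = \mu_h$; by \cref{claim:GhMu}, its Green kernel $\G^h(\mu_h, v) = h(v) c_v$ is strictly positive on $S_h$, so \eqref{def:standardMeasure} holds, and by \eqref{eq:XvsY} the full Green kernel is $\G^h(x,y) = \G_\o(x,y) h(y)/h(x)$. The Martin kernel \eqref{eq:MartinDef} then becomes
$$ M(x,y) = \frac{\G^h(x,y)}{\G^h(\mu_h, y)} = \frac{\G_\o(x,y) h(y)/h(x)}{h(y) c_y} = \frac{\G_\o(x,y)}{h(x) c_y} = \frac{\g_\o(x,y)}{h(x)} \, ,$$
using the definition \eqref{eq:greend} of the Green density and cancelling $h(y)$.

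Next I would invoke \cref{lem:ExpectationEq1} with the finite set $D$ (which contains $\o$ and satisfies $\mu_h(D)=1$ by hypothesis) and with $x \in D$: it gives $\E_{\mu_h}[M(x, Y_L)] = 1$ where $L = L_D$ is the last visit to $D$. Substituting the formula for $M$ just derived, $M(x, Y_L) = \g_\o(x, Y_L)/h(x)$, and pulling the constant $h(x)$ out of the expectation yields exactly
$$ \E_{\mu_h}\, \g_\o(x, Y_L) = h(x) \cdot \E_{\mu_h}[M(x,Y_L)] = h(x) \, ,$$
which is \eqref{eq:PotMixture}. One small point to address is that $x \in D$ might not lie in $S_h$; but if $h(x) = 0$ then by \cref{cor:PotentialLipschitz} (or directly by harmonicity and nonnegativity) $x$ is separated from $S_h$, the $h$-process never visits $x$, and both sides vanish — so I would either restrict attention to $x \in D \cap S_h$ and handle $x \notin S_h$ trivially, or note that \cref{lem:ExpectationEq1} as stated already covers $x \in D \subset S$ with the convention $S = S_h$.

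I do not expect any serious obstacle: the theorem is essentially a repackaging of the Dynkin-type last-exit decomposition of \cref{lem:ExpectationEq1} through the explicit Green-kernel identities of \cref{claim:GhMu}. The only thing requiring a line of care is verifying that the hypotheses of \cref{lem:ExpectationEq1} are genuinely met — in particular that $\mu_h$ is a bona fide probability measure supported on (a finite subset of) $S_h$, which follows from the definition \eqref{def:mu} of $\mu_h$ as $\mu_h(v) = c_{\o v} h(v)$ (finitely supported on neighbors of $\o$ with positive potential) together with $\Delta h(\o) = 1$ guaranteeing total mass $1$ — and that the cancellation producing $M(x,y) = \g_\o(x,y)/h(x)$ is valid, i.e. $h(y) > 0$ for all $y \in S_h$, which is the definition of $S_h$.
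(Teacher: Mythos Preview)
Your proposal is correct and follows essentially the same route as the paper: compute the Martin kernel of the $h$-process via \cref{claim:GhMu} to get $M(x,y)=\g_\o(x,y)/h(x)$, then invoke \cref{lem:ExpectationEq1}. The paper is slightly more careful on one point you flag only at the end: since the state space of the $h$-process is $S_h$ and $\o\in D\setminus S_h$, the lemma is applied to $D\cap S_h$ rather than $D$ itself, and the case $x\notin S_h$ is dispatched via \cref{rmk:Sh} (both sides vanish because $x$ and $Y_L$ lie in different components of $V\setminus\{\o\}$).
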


\begin{proof}[Proof of \cref{cor:PotentialsMixtureDipoles}.]
If $x\not\in S_h$, then both sides of \eqref{eq:PotMixture} vanish by \cref{rmk:Sh}. Therefore we may assume that $x\in D \cap S_h$. For all $y \in S_h$ we get from 
  \eqref{eq:XvsY} that $\g^h(x,y)=\g_\o(x,y)h(y)/h(x)$,  and  \cref{claim:GhMu} yields $\g^h(\mu_h,y)=h(y)$.
Thus  \cref{lem:ExpectationEq1}, applied to $D\cap S_h$ (instead of $D$),   implies that
$$1=\E_{\mu_h} \left [ {\g^h(x, Y_{L})\over \g^h(\mu_h,Y_{L})} \right ] = \E_{\mu_h}\left [ {\g_\o(x, Y_{L}) \over h(x)} \right ]\, .$$
\end{proof}

In the next claim, we show that the time reversal of an $h$-process,  run until it exits a finite set, is a network random walk stopped at $o$. This has an application to harmonic measure from infinity (\cref{cor:ExitMeasureHarmonicMeasure}) and to uniform spanning trees, see \cref{lem:Not1Ended}.
\begin{claim}\label{claim:HUntilExit} In the setup of \cref{cor:PotentialsMixtureDipoles}, let $\nu_D$ be the law of $Y_{L}$. Then $(Y_L,\ldots,Y_0,\o)$ has the law of the network random walk with initial distribution $\nu_D$, stopped at $\o$. 
\end{claim}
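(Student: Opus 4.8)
The plan is to compute, for an arbitrary finite trajectory, the probability that the $h$-process produces it as the last excursion in $D$, and recognize the answer as a reversed network-random-walk probability weighted by $\nu_D$. Concretely, fix a path $\gamma=(v_0,v_1,\dots,v_k)$ in $D\cap S_h$ with $v_k\in D$ and all $v_1,\dots,v_{k-1}$ allowed to be anywhere in $S_h$ but with the understanding that $v_0$ is the exit point; I want to show
$$\P_{\mu_h}\bigl(Y_L=v_k,\ Y_{L-1}=v_{k-1},\dots,Y_{L-k}=v_0\bigr)$$
equals $\nu_D(v_k)$ times the network-random-walk probability of the reversed path $(v_k,v_{k-1},\dots,v_0,\o)$ stopped at $\o$. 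Here $L=L_D$, so $v_k=Y_L\in D$ and $v_0,\dots,v_{k-1}\notin D$; in particular the reversed path first re-enters $D$ only at its last step (landing at $\o\in D$), which is exactly the event that a network random walk started from $v_k$ follows $(v_k,\dots,v_0)$ and then steps to $\o$ without having returned to $D$ in between. Wait — more carefully, I should phrase the claim as: conditioned on $Y_L=v_k$, the reversed chain $(Y_L,Y_{L-1},\dots,Y_0,\o)$ has the law of the network random walk started at $v_k$, run until it first hits $\o$ (noting it must exit $D$ first since $v_0\notin D$ when $k\ge1$, and in general the walk reaches $\o$ eventually by recurrence).

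The key computation proceeds in three steps. First, by the decomposition used in \cref{lem:ExpectationEq1}, for $y=v_k\in D$ and the event that the last visit to $D$ happens at time $n$ along the specified final segment, I use $\P_{v_0}(\text{path }\gamma,\ L=k)=\P^h_{v_0}(\gamma)\,\P^h_{v_k}(L=0)$ after summing over the initial part; combined with \eqref{eq:exit2}, $\P_{\mu_h}(Y_L=v_k)=\G^h(\mu_h,v_k)\P^h_{v_k}(L=0)=h(v_k)c_{v_k}\,\P^h_{v_k}(L=0)$ using \eqref{eq:Gh}. Second, I unwind the $h$-transform via the telescoping identity \eqref{eq:hprocessvsSRWSimple}: the probability that the $h$-process, started from $\mu_h$, traverses an initial path ending at $v_0$ and then follows $\gamma=(v_0,\dots,v_k)$ and then never returns to $D$ equals, by \eqref{eq:hprocessvsSRW}, $c_\o\P_\o(\o,\ \text{initial path},\ \gamma')\,h(v_k)$ for the appropriate SRW path $\gamma'$, and summing over all initial paths from $\o$ to $v_0$ avoiding nothing in particular reconstructs a Green-kernel factor; the net effect is that the $h$-weights telescope down to a single factor $h(v_k)$, which then cancels when we divide by $\P_{\mu_h}(Y_L=v_k)$. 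Third, I must handle the "$L=0$" (i.e. "never return to $D$ in the future") constraint: under the $h$-transform, $\P^h_{v_k}(\text{never return to }D)$ is, by the transience of the $h$-process and \eqref{eq:hprocessvsSRWSimple} applied to infinite paths or via an exhaustion limit, exactly the SRW probability (started at $v_k$, killed at $\o$) of reaching $\o$ before returning to $D$ — which, since $\o\in D$, is simply $\P_{v_k}(\tau_\o<\tau_{D\setminus\{\o\}})$... and this is precisely the weight needed to make the reversed path from $\o$ back out to $v_k$ a genuine SRW excursion.

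Assembling these, the ratio $\P_{\mu_h}(Y_L=v_k,\ \text{reversed path}=\gamma)/\P_{\mu_h}(Y_L=v_k)$ becomes a product of $p(v_i,v_{i-1})$-type factors that, after using reversibility $c_{v_i}p(v_i,v_{i-1})=c_{v_{i-1}}p(v_{i-1},v_i)=c_{v_{i-1}v_i}$, rearranges into $\P_{v_k}\bigl((v_k,v_{k-1},\dots,v_0,\o)\bigr)$ — exactly the claim. A cleaner route, which I would actually prefer to write up, is to invoke \cref{cor:PotentialsMixtureDipoles} directly: it already identifies $h(x)=\E_{\mu_h}\g_\o(x,Y_L)=\sum_{v\in D}\nu_D(v)\g_\o(x,v)$, and since $\g_\o(\cdot,v)=\G_\o(\cdot,v)/c_v$ is the Green density of SRW killed at $\o$, one has $\g_\o(x,v)c_x = \G_\o(v,x)$, the expected visits to $x$ by SRW from $v$ before hitting $\o$; reversibility then lets one read off that the time-reversal of an excursion is again an SRW excursion, and the initial distribution of the reversed path is $\nu_D$ by construction. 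The main obstacle I anticipate is the bookkeeping around the event $\{L_D=0\}$ under the $h$-transform — i.e. justifying that "never returning to $D$ in the $h$-process" reverses to "SRW started at $\o$ exiting $D$ without returning to $\o$", which needs either an exhaustion/limiting argument or a careful application of the telescoping identity \eqref{eq:hprocessvsSRWSimple} to the tail of the trajectory, together with transience of the $h$-process from \cref{claim:GhMu} to ensure the relevant sums converge and no probability escapes to infinity.
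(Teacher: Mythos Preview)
Your proposal contains the right ingredients (the formula \eqref{eq:exit2} for $\nu_D$, the identity \eqref{eq:Gh}, the telescoping \eqref{eq:hprocessvsSRW}, and reversibility), and these are exactly what the paper uses. But there are two genuine problems.

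\textbf{Misreading of the path.} The claim concerns the \emph{entire} reversed trajectory $(Y_L,Y_{L-1},\dots,Y_0,\o)$, not a ``last excursion''. The vertices $Y_0,\dots,Y_{L-1}$ may lie in $D$ many times; only $Y_{L+1},Y_{L+2},\dots$ are outside $D$. Your repeated assertion that ``$v_0,\dots,v_{k-1}\notin D$'' and that the reversed walk ``first re-enters $D$ only at its last step'' is simply false. The right object to fix is a full path $(v_0,\dots,v_\ell)$ in $S_h$ with $v_0\sim \o$ and $v_\ell\in D$, and to compute $\P^h_{\mu_h}\big((Y_0,\dots,Y_L)=(v_0,\dots,v_\ell)\big)$ directly: by \eqref{eq:hprocessvsSRW} this equals $c_\o\,\P_\o(\o v_0\cdots v_\ell)\,h(v_\ell)\,\P^h_{v_\ell}(L=0)$, with no summation over an ``initial part''.

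\textbf{Step 3 is both unnecessary and wrong.} You try to interpret $\P^h_{v_k}(L_D=0)$ as the SRW probability $\P_{v_k}(\tau_\o<\tau_{D\setminus\{\o\}})$. This identity is false in general: take $G=\mathbb{Z}$, $\o=0$, $h(n)=\max(n,0)$, $D=\{-1,0,1,2\}$, $v_k=2$; then $\P^h_2(L_D=0)=1/4$ while $\P_2(\tau_0<\tau_{\{-1,1,2\}}^+)=0$. More importantly, no such interpretation is needed. The right-hand side of the claim is just the network random walk from $\nu_D$ stopped at $\o$, with \emph{no} additional constraint; the factor $\P^h_{v_\ell}(L=0)$ enters only through $\nu_D(v_\ell)=h(v_\ell)c_{v_\ell}\,\P^h_{v_\ell}(L=0)$ (your Step~1), and it therefore appears identically on both sides and cancels. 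Once you see this, the proof collapses to a one-line reversibility check, which is exactly the paper's argument. Your ``cleaner route'' via \cref{cor:PotentialsMixtureDipoles} only recovers one-point marginals of $\nu_D$ and does not yield the pathwise statement.
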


\begin{proof} We have to show that for any finite path $(v_0, v_1,\ldots,v_\ell)$ in $S_h$, 
\be \label{eq:leftright} \P_{\mu_h}^h\Big ((Y_0,\ldots,Y_L)=(v_0,\ldots,v_\ell)\Big ) = \P_{\nu_D}\Big ((X_0,\ldots,X_{\tau_\o})=(v_\ell,\ldots,v_0,\o)\Big ) \, .
\ee
We write the left hand side using \eqref{eq:hprocessvsSRW} as
$$ c_\o p(v_\o,\o) \Big [ \prod_{i=1}^{\ell} p(v_{i-1},v_i)\Big ]h(v_\ell) \P^h_{v_\ell}(L=0) \, .$$
On the other hand, using \eqref{eq:exit2} from the proof of \cref{lem:ExpectationEq1} and \cref{claim:GhMu}, we deduce that the right hand side of \eqref{eq:leftright} equals
$$ h(v_\ell)c_{v_\ell}\P^h_{v_\ell}(L=0) \prod_{i=0}^{\ell-1} p(v_{\ell-i},v_{\ell-i-1}) p(v_\o,\o) \, .$$
Thus the two sides of \eqref{eq:leftright} are equal by reversibility.    
\end{proof}

\section{Harmonic measures from infinity}\label{sec:Harmonic}

Given a recurrent network $(G,c)$, a finite set $A\subset V$ and a vertex $v$, the {\bf harmonic measure} from $v$ on $A$ is 
$$ \omega_v^A(z) := P_v(X_{\tau_A} =z) \, ,$$
for any $z\in A$. We say that $A$ admits {\bf harmonic measure from infinity} if the limit $\omega_\infty^A(z) = \lim _{v \to \infty} \omega_v^A(z)$ exists for all $z\in A$. 
If this holds for all finite $A \subset V$, then we say that the network $(G,c)$ admits {\bf harmonic measures from infinity}. The following criterion is often useful.

\begin{claim} \label{clm:HarmCriterion}
A recurrent network $(G,c)$ admits harmonic measures from infinity, if and only if there exists a vertex $\o \in V$ and an exhaustion $\{D_n\}$ of $G$ with $o \in D_n$ for all $n$ such that 
 the harmonic measures satisfy
$$q:=\sup_n \max_{v_1,v_2 \in  \partial D_{n+1}} \|\omega_{v_1}^{\partial D_n}-\omega_{v_2}^{\partial D_n}\|_{{\rm TV}}<1 \, .$$

\end{claim}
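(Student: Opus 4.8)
The plan is to prove both directions of the claimed equivalence, treating the ``only if'' direction as essentially a packaging of the hypothesis and the ``if'' direction as the substantive one.

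\textbf{The ``only if'' direction.} Suppose $(G,c)$ admits harmonic measures from infinity. Fix any vertex $\o$ and any exhaustion $\{D_n\}$ with $\o \in D_n$. For each fixed $n$, the map $v \mapsto \omega_v^{\partial D_n}$ is a function on vertices, and by hypothesis it converges (coordinatewise, hence in total variation since $\partial D_n$ is finite) to $\omega_\infty^{\partial D_n}$ as $v \to \infty$. I would first argue that $\sup_{v_1,v_2} \|\omega_{v_1}^{\partial D_n} - \omega_{v_2}^{\partial D_n}\|_{\rm TV} < 1$ for \emph{every} fixed $n$: indeed, by the harmonic-measure version of the Harnack-type argument, $\omega_v^{\partial D_n}(z)>0$ for all $v \notin D_n$ and all $z \in \partial D_n$, and the convergence to $\omega_\infty^{\partial D_n}$ forces the total-variation distances to stay bounded away from $1$ on the (one-point compactification) sequence $v \to \infty$, while only finitely many $v$ lie in any $D_m$, so one can take a further subexhaustion. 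The cleanest route: pass to a sparse subexhaustion $\{D_{n_k}\}$ so that each $\partial D_{n_{k+1}}$ sits far out in the region where $\omega_v^{\partial D_{n_k}}$ is uniformly close to $\omega_\infty^{\partial D_{n_k}}$, making $q \le 1/2 < 1$. This is routine once the pieces are named.

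\textbf{The ``if'' direction.} This is the real content. Assume the exhaustion $\{D_n\}$ with $q<1$ exists. The goal is to show $\omega_v^A(z)$ converges as $v\to\infty$ for every finite $A$. The key mechanism is a contraction/coupling argument on the exit distributions $\omega_v^{\partial D_n}$ as we send the starting point further out. Concretely: for any two vertices $v_1, v_2$ lying outside $D_{n+1}$, the walk from $v_i$ must cross $\partial D_{n+1}$ before reaching $\partial D_n$; conditioning on the first hit of $\partial D_{n+1}$ and then applying the strong Markov property, $\omega_{v_i}^{\partial D_n}$ is a mixture (over $\partial D_{n+1}$) of the measures $\omega_{w}^{\partial D_n}$, $w \in \partial D_{n+1}$. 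By the hypothesis, any two such $\omega_w^{\partial D_n}$ are within $q$ in total variation, so by the standard ``common part'' coupling the mixtures satisfy $\|\omega_{v_1}^{\partial D_n} - \omega_{v_2}^{\partial D_n}\|_{\rm TV} \le q$; iterating across the telescope $D_n \subset D_{n+1} \subset \cdots$ shows that for $v_1, v_2$ outside $D_{n+m}$ we get $\|\omega_{v_1}^{\partial D_n} - \omega_{v_2}^{\partial D_n}\|_{\rm TV} \le q^m$. Hence $\{\omega_v^{\partial D_n}\}$ is Cauchy as $v \to \infty$, so $\omega_\infty^{\partial D_n}$ exists for every $n$. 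Finally, I would upgrade from the exhaustion sets $\partial D_n$ to an arbitrary finite set $A$: choose $n$ with $A \subset D_n$, decompose the walk from a far-away $v$ according to its exit point on $\partial D_n$ (which it must hit before $A$), write $\omega_v^A = \sum_{w \in \partial D_n} \omega_v^{\partial D_n}(w)\, \omega_w^A$, and take $v \to \infty$ using the convergence of $\omega_v^{\partial D_n}$ just established; this gives $\omega_\infty^A(z) = \sum_{w}\omega_\infty^{\partial D_n}(w)\omega_w^A(z)$.

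\textbf{Main obstacle.} The subtle point is not the geometric-contraction estimate itself but two bookkeeping issues: first, in the ``only if'' direction one must be careful that the given convergence of $\omega_v^A$ for all finite $A$ genuinely yields $q<1$ for \emph{some} exhaustion, which requires passing to a subexhaustion and checking that the TV distances are bounded below $1$ rather than merely approaching $1$ in the limit — here one uses that for finitely many starting points inside any $D_m$ the distances are automatically $<1$ by irreducibility (both measures have full support on $\partial D_n$), combined with the limiting values being genuinely $<1$. Second, in the ``if'' direction one must justify that a far-away walk indeed hits $\partial D_n$ before $A$ and before returning — this is immediate from recurrence and the fact that $\{D_n\}$ exhausts $G$, but should be stated. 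I expect the mixture-over-$\partial D_{n+1}$ step together with the elementary coupling bound $\|\sum_i p_i \mu_i - \sum_i p_i \nu_i\|_{\rm TV} \le \max_{i,j}\|\mu_i - \nu_j\|_{\rm TV}$ (when the two mixtures use the same weights, or more carefully when they use different weights but all component measures are pairwise $q$-close) to be the crux, and I would state and prove that elementary lemma explicitly before invoking it.
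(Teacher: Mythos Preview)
Your proposal is correct and follows essentially the same route as the paper. The paper's proof is extremely terse---it only writes out the ``if'' direction, asserting the inductive bound $\|\omega_{v_1}^{\partial D_k}-\omega_{v_2}^{\partial D_k}\|_{\rm TV}\le q^n$ for $v_1,v_2\notin D_{n+k}$ and then passing to arbitrary $A\subset D_k$ via the mixture formula---which is exactly your Dobrushin-type contraction argument through successive boundary layers; the ``only if'' direction is left entirely implicit in the paper, so your subexhaustion sketch there is more than the paper provides.
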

\begin{proof} 
Given a finite set $A \subset V$,
find $k$ such that $A \subset D_k$ and observe inductively that for each $n\geq 1$ and $v_1, v_2 \notin D_{n+k}$, we have  $$\|\omega_{v_1}^{\partial D_k}-\omega_{v_2}^{\partial D_k}\|_{{\rm TV}} \le q^n \,, \quad \text{whence} \quad
 \|\omega_{v_1}^{A}-\omega_{v_2}^{A}\|_{{\rm TV}} \le q^n \,. $$ 
\end{proof}

\begin{thm} \label{thm:PotentialHarmonicMeasure} A recurrent network $(G,c,\o)$ has a unique potential  if and only if every finite set $A\subset V$ admits harmonic measure from infinity. In this case, the unique potential $h$ and the collection of harmonic measures from infinity  on finite sets can be computed from each other as follows.
\begin{itemize} 
\item[{\rm (a)}] For all $z\in V\setminus\{\o\}$, we have $h(z) = \omega_\infty^{\{\o,z\}}(z) \g_\o(z,z)$. \vspace{2ex}

\item[{\rm (b)}]  If  $\o \in A$ and $z \in A \setminus\{\o\}$, then
$\displaystyle \omega_\infty^A(z) = \frac{\det(M^{h,z})}{\det(M)}$, where $M$ is the invertible matrix 
$M = \big ( \g_\o(x,y) \big )_{x,y \in A \setminus \{\o\}}$,
 and for every $z\in A\setminus\{\o\}$, the matrix
$M^{h,z}$ is  obtained from $M$ by replacing the  $z$-column   with the vector $(h(x))_{x\in A \setminus \{\o\}}$.

\item[{\rm (c)}]  If  $\o \notin A$, choose $\tilde{o} \in A$ and let $\tilde h$ be the unique potential for the rooted network
$(G,c, \tilde o)$, as in \cref{claim:uniquenessDoesNotDependOnRoot}.
To compute $\omega_\infty^A(\cdot)$, use (b) with $\tilde h$ instead of $h$. 
\end{itemize}
\end{thm}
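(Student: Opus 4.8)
The plan is to prove the equivalence and the three formulas by connecting the harmonic-measure-from-infinity of the two-point set $\{\o,z\}$ directly to the value of a potential at $z$, using the dipole representation and the compactness of the potential space from \cref{claim:compact}. First I would establish the ``one direction plus formula (a)'' simultaneously. Fix $z\neq\o$. For a vertex $v$ far away, the harmonic measure $\omega_v^{\{\o,z\}}(z)=\P_v(\tau_z<\tau_\o)$ can be written in terms of the Green density of the walk killed at $\o$: by a standard last-exit / reversibility computation, $\P_v(\tau_z<\tau_\o)=\g_\o(v,z)/\g_\o(z,z)$. (This is just the identity $\G_\o(v,z)=\P_v(\tau_z<\tau_\o)\G_\o(z,z)$ from the strong Markov property, divided by $c_z$ and using symmetry of $\g_\o$.) Hence $\omega_v^{\{\o,z\}}(z)$ converges as $v\to\infty$ if and only if $\g_\o(v,z)$ converges as $v\to\infty$, and by \cref{claim:compact} together with the Remark following it, the latter convergence (for every $z$) is equivalent to uniqueness of the potential; when it holds the limit is exactly $h(z)$. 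This yields both the equivalence ``unique potential $\iff$ every two-point set $\{\o,z\}$ admits harmonic measure from infinity'' and formula (a), namely $h(z)=\omega_\infty^{\{\o,z\}}(z)\,\g_\o(z,z)$. To upgrade ``every two-point set $\{\o,z\}$'' to ``every finite set $A$'', I would run the argument below for (b): once we can express $\omega_v^A(z)$ through the finite matrix $M=(\g_\o(x,y))_{x,y\in A\setminus\{\o\}}$ and the vector $(\g_\o(v,x))_{x\in A\setminus\{\o\}}$, convergence of all entries $\g_\o(v,\cdot)$ forces convergence of $\omega_v^A(z)$, and conversely convergence of all $\omega_v^A$ for all finite $A$ in particular covers the two-point sets.

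For formula (b), fix a finite $A\ni\o$ and write $A'=A\setminus\{\o\}$. Consider the walk started at a far vertex $v$ and killed at $\o$; I would express the hitting distribution of $A'$ before $\o$ via the killed Green kernel. The key linear-algebra identity is: for $z\in A'$,
$$\P_v(\tau_{A'}<\tau_\o,\ X_{\tau_{A'}}=z)=\sum_{w\in A'}\g_\o(v,w)\,\big(M^{-1}\big)_{w,z}\cdot(\text{normalization}),$$
which comes from decomposing an excursion from $v$ to $A'$ (before $\o$) by its last visit to each point of $A'$; more precisely, $\G_\o(v,w)=\sum_{z\in A'}\P_v(X_{\tau_{A'}}=z,\tau_{A'}<\tau_\o)\,\G_{A'\cup\{\o\}}(z,w)$ is false in that naive form, so instead I would use the standard fact that the matrix $M$ is invertible (it is the Green kernel of an irreducible transient-on-$A'$ killed chain, symmetric positive definite) and that $\P_v(X_{\tau_{A'}}=z,\tau_{A'}<\tau_\o)$ solves the linear system $\sum_{z\in A'}\P_v(\cdots=z)\g_\o(z,w)=\g_\o(v,w)$ for all $w\in A'$. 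By Cramer's rule this gives
$$\omega_v^A(z)=\frac{\det\big(M^{v,z}\big)}{\det(M)},$$
where $M^{v,z}$ replaces the $z$-column of $M$ by $(\g_\o(v,x))_{x\in A'}$. Letting $v\to\infty$ and using that $\g_\o(v,x)\to h(x)$ (valid precisely under uniqueness of the potential, by part (a)'s analysis), the column $(\g_\o(v,x))_x$ converges to $(h(x))_x$, so $\omega_\infty^A(z)=\det(M^{h,z})/\det(M)$, which is (b). Invertibility of $M$ I would justify by noting $M=(\g_\o(x,y))_{x,y\in A'}$ is the Green kernel restricted to $A'$ of the walk killed at $\o$, hence equals $(I-P_{A'})^{-1}D^{-1}$ up to conjugation for the substochastic block $P_{A'}$, and it is a strictly positive-definite symmetric matrix by the usual Dirichlet-energy argument (for any $0\neq f$ supported on $A'$, $f^\top M f = \|$harmonic extension$\|_{\mathcal E}^{-1}$-type expression $>0$); alternatively $M^{-1}$ is, up to the diagonal $(c_x)$, the Laplacian with Dirichlet condition at $\o$ restricted to $A'$, which is nonsingular.

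Part (c) is immediate from (b) once we reduce the root: by \cref{claim:uniquenessDoesNotDependOnRoot}, the network $(G,c,\o)$ has a unique potential iff $(G,c,\tilde\o)$ does, and for a finite set $A$ with $\tilde\o\in A$ we apply (b) verbatim with $\tilde\o,\tilde h,\tilde M:=(\g_{\tilde\o}(x,y))_{x,y\in A\setminus\{\tilde\o\}}$ in place of $\o,h,M$; nothing in the argument for (b) used any special feature of $\o$ beyond $\o\in A$. Finally, to close the equivalence in full generality I would note: (i) if the potential is unique then by (a) every two-point set, and by (b)/(c) every finite set, admits harmonic measure from infinity; (ii) conversely if every finite set admits harmonic measure from infinity, then in particular $\omega_v^{\{\o,z\}}(z)=\g_\o(v,z)/\g_\o(z,z)$ converges for every $z$, so $\g_\o(\cdot,z)$ converges pointwise as $v\to\infty$, and by the Remark after \cref{claim:compact} this forces the potential to be unique. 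The main obstacle I anticipate is the clean derivation of the Cramer's-rule identity for $\omega_v^A(z)$ --- i.e.\ pinning down the precise linear system solved by the hitting probabilities $\big(\P_v(X_{\tau_{A'}}=z,\tau_{A'}<\tau_\o)\big)_{z\in A'}$ in terms of the symmetric matrix $M$ --- and verifying invertibility of $M$ carefully; everything else (the convergence statements, reversibility computations, and the root change) is routine given the earlier results in the paper.
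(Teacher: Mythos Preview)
Your proposal is correct and follows the same route as the paper: the first-entrance decomposition $\g_\o(v,w)=\sum_{z\in A\setminus\{\o\}}\omega_v^A(z)\,\g_\o(z,w)$ together with Cramer's rule is precisely the content of \cref{lem:HarmonicMeasureDipoles}, and your two-point identity is the paper's \eqref{eq:2PtsA}. For the invertibility of $M$ (which you flag as the main obstacle), the paper's argument is cleaner than a positive-definiteness route: if $\sum_{x}\lambda_x\,\g_\o(x,y)=0$ for all $y\in A\setminus\{\o\}$, then $\psi(y):=\sum_x\lambda_x\,\g_\o(x,y)$ is bounded, harmonic off $A$, and vanishes on $A$, hence $\psi\equiv 0$ by recurrence, whence $\lambda_y=-\Delta\psi(y)=0$.
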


\begin{remark} If $z$ is a vertex of $G$ such that there exists a network automorphism switching between $z$ and $\o$, then $\omega_\infty^{\{\o,z\}}(z)={1/2}$. For any such vertex $z$ we deduce from part (a) of the theorem above that $h(z) = \g_\o(z,z)/2$. In the particular case of $\mathbb{Z}^2$ this is contained in Proposition 4.4.1 of \cite{LawlerLimic}.
\end{remark}
\begin{remark}
Part (b) of the theorem above follows  \cite[Chapters 11 and 14]{Spitzer}.
\end{remark}

To prove this theorem, we first relate  harmonic measure  from a vertex to dipoles.

\begin{lemma}\label{lem:HarmonicMeasureDipoles}Let $(G,c,\o)$ be a recurrent rooted network and let $A \subset V$ be a finite set containing $\o$ with $|A|\geq 2$. Denote by $M$ the square matrix 
$$ M = \big ( \g_\o(x,y) \big )_{x,y \in A \setminus \{\o\}} \, .$$
Then $M$ is invertible. Moreover, for every $v\neq \o$ and $y\in A$,
$$ \omega_v^A(y) = \frac{\det(M^{v,y})}{\det(M)}\, ,$$
where $M^{v,y}$ is the matrix 
obtained from $M$ by replacing the $y$-th column with the vector $(\g_\o(x,v))_{x\in A \setminus \{\o\}}$.
\end{lemma}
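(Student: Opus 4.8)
The plan is to exploit the fact that harmonic measure $\omega_v^A(\cdot)$, as a function of the starting vertex $v$, is itself a linear combination of dipoles, and then read off the coefficients via Cramer's rule. Concretely, fix the finite set $A \ni \o$ with $|A| \ge 2$, and write $A \setminus \{\o\} = \{y_1, \dots, y_m\}$. For each $z \in A$ the function $v \mapsto \omega_v^A(z) = \P_v(X_{\tau_A} = z)$ is harmonic on $V \setminus A$, takes the value $\mathbf{1}_{\{z\}}(y_i)$ at each $y_i$, vanishes at $\o$ when $z \ne \o$, and is bounded. I would show that any such function is uniquely represented as $\omega_\cdot^A(z) = \sum_{i=1}^m \alpha_i^{(z)} \g_\o(\cdot, y_i)$ for suitable real coefficients $\alpha_i^{(z)}$. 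Existence: the right-hand side is harmonic off $A$ (each $\g_\o(\cdot,y_i)$ is harmonic off $\{\o,y_i\}$), vanishes at $\o$, and by choosing the $\alpha_i^{(z)}$ to solve the linear system $\sum_i \alpha_i^{(z)} \g_\o(y_j, y_i) = \mathbf{1}_{\{z\}}(y_j)$ for $j = 1,\dots,m$ — i.e. $M \alpha^{(z)} = e_z$ — it matches $\omega_\cdot^A(z)$ on all of $A$; then the difference is a bounded harmonic function on $V$ vanishing at $\o$, hence identically zero by recurrence (the standard martingale/Fatou argument, as used in \cref{claim:uniquenessDoesNotDependOnRoot} and \cref{claim:GreenIsDipole}). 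This simultaneously forces $M$ to be invertible: if $M$ were singular there would be a nonzero $\beta$ with $M\beta = 0$, making $\sum_i \beta_i \g_\o(\cdot, y_i)$ a bounded harmonic function on $V$ vanishing at $\o$ and at all $y_j$, hence zero; but then Cramer-type considerations (or directly: $\Delta$ of this function is $-\sum_i \beta_i \mathbf{1}_{\{y_i\}}$ plus a multiple of $\mathbf{1}_\o$, and being identically zero forces all $\beta_i = 0$) give a contradiction. I may need to present the invertibility argument slightly more carefully — that is likely the one genuinely delicate point.

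Granting the representation $\omega_\cdot^A(z) = \sum_i \alpha_i^{(z)} \g_\o(\cdot, y_i)$ with $\alpha^{(z)} = M^{-1} e_z$, I then evaluate at $v \notin A$ (in fact at any $v \ne \o$, noting the formula for $v \in A$ reduces to the Kronecker delta and is consistent). Evaluating,
$$
\omega_v^A(z) = \sum_{i=1}^m \alpha_i^{(z)} \g_\o(y_i, v) = \big(M^{-1} e_z\big) \cdot b_v, \qquad b_v := \big(\g_\o(y_i, v)\big)_{i=1}^m,
$$
using symmetry $\g_\o(y_i,v) = \g_\o(v,y_i)$. Writing this as $b_v^\top M^{-1} e_z = \big(M^{-1} b_v\big)_z$ — the $z$-th coordinate of the solution $u$ of $M u = b_v$ — Cramer's rule gives $u_z = \det(M^{v,z})/\det(M)$, where $M^{v,z}$ replaces the $z$-column of $M$ by $b_v = (\g_\o(x,v))_{x \in A \setminus \{\o\}}$. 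This is exactly the claimed formula. The only bookkeeping subtlety is matching the indexing convention ("$y$-th column" labeled by the vertex $y = y_i \in A \setminus \{\o\}$), which is purely notational.

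The main obstacle I anticipate is making the uniqueness/invertibility step airtight: specifically, justifying that a bounded harmonic function on a recurrent network vanishing at a single vertex must vanish everywhere (this is the engine behind both the uniqueness of the dipole representation and the invertibility of $M$). This is standard — $f(X_{n \wedge \tau_\o})$ is a bounded martingale, optional stopping plus recurrence ($\tau_\o < \infty$ a.s.) gives $f(v) = \E_v f(X_{\tau_\o}) = f(\o) = 0$ — and it has already been invoked twice in the excerpt, so I would simply cite that argument rather than reprove it. A secondary point worth a sentence: one should check the $b_v$ that appears really is $(\g_\o(x,v))_{x \in A\setminus\{\o\}}$ and not its transpose, which the symmetry of $\g_\o$ handles cleanly. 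Beyond these, the proof is a short Cramer's-rule computation once the harmonic-representation lemma is in place.
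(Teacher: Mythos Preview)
Your argument is correct, but it takes a longer route than the paper's. The paper obtains the linear system in one stroke via first-entrance decomposition: for every $y \in A \setminus \{\o\}$,
\[
\g_\o(v,y) \;=\; \sum_{x \in A \setminus \{\o\}} \omega_v^A(x)\, \g_\o(x,y)\,,
\]
which is exactly $M\,(\omega_v^A(x))_x = b_v$; Cramer's rule then applies once $M$ is shown invertible (by the same Laplacian argument you sketch). Your approach is in a sense dual: you fix $z$ and represent the \emph{function} $v \mapsto \omega_v^A(z)$ as a dipole combination $\sum_i \alpha_i^{(z)} \g_\o(\cdot, y_i)$, which requires first proving invertibility of $M$, then a boundary-matching uniqueness argument, and finally invoking the symmetry of $M$ to transpose $b_v^\top M^{-1} e_z$ back to $(M^{-1} b_v)_z$. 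Both land on the same Cramer computation; the paper's route is shorter because the first-entrance identity hands you the system directly, with no need for the separate representation step or the symmetry manoeuvre.

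One small slip to fix: the difference $\omega_\cdot^A(z) - \sum_i \alpha_i^{(z)} \g_\o(\cdot, y_i)$ is \emph{not} harmonic on all of $V$ as you wrote --- neither summand is harmonic at the points of $A$. It is harmonic only on $V \setminus A$. This does not damage the argument, since you have already arranged that the difference vanishes on all of $A$; just run the martingale argument with stopping time $\tau_A$ rather than $\tau_\o$.
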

\begin{proof} First entrance decomposition gives that for every $y\in A\setminus \{o\}$, 
$$ \g_\o(v,y) = \sum_{x \in A \setminus \{\o\}} 
\omega_v^A(x) \g_\o(x,y) \, .$$
We first show that $M$ is invertible. Indeed, suppose that the coefficients $\lambda_x$ satisfy $\sum_{x \in A \setminus \{o\}} \lambda_x \g_\o(x,y) = 0$ for all $y\in A \setminus \{\o\}$. Define $\psi:V \to \RR$ by
$$ \psi(y) := \sum_{x \in A \setminus \{o\}} \lambda_x \g_\o(x,y) \, .$$
Then $\psi$ is bounded, harmonic off $A$ and vanishes on $A$. Hence by recurrence $\psi \equiv 0$. On the other hand, for all $y\in A \setminus \{\o\}$ we have
$$ \Delta \psi (y) = -\sum_{x \in A \setminus \{o\}} \lambda_x {\bf 1}_x(y) = -\lambda_y \, ,$$
hence $\lambda_y=0$. The result now follows from Cramer's rule.
\end{proof}

\begin{remark} In particular, when $A=\{\o,y\}$, we get
\begin{equation} \label{eq:2PtsA}
    \omega_v^A(y) = \frac{\g_\o(v,y)}{\g_\o(y,y)} \, ,
\end{equation} 
which is also easy to verify directly. 
\end{remark}

\begin{proof}[Proof of \cref{thm:PotentialHarmonicMeasure}] Assume first that there exists a unique potential $h$. Then $h(x) = \lim_{v\to \infty} \g_\o(x,v)$, hence (b) follows from \cref{lem:HarmonicMeasureDipoles} and in particular it shows that every finite set admits harmonic measure from infinity. Furthermore, (a) follows from \eqref{eq:2PtsA}. 

Conversely, assume that every finite set admits harmonic measure from infinity. By \eqref{eq:2PtsA} we deduce that $\g_\o(v,y)=\g_\o(y,v)$ converges as $v\to \infty$ for any $y \in V$. By taking an exhaustion by finite sets $\{D_n\}_{n \geq 1}$, \cref{cor:PotentialsMixtureDipoles} implies there is a unique potential given in (a). \end{proof}

Combining the preceding theorem with the path reversal claim (\cref{claim:HUntilExit}) yields the following folklore corollary.

\begin{cor}\label{cor:ExitMeasureHarmonicMeasure} Let $(G,c,\o)$ be a recurrent network with a unique potential $h$ and $A\subset V$ a finite set so that $\mu_h(A)=1$. Then the last exit distribution $\nu_A$ of the $h$-process on $A$ equals the harmonic measure from infinity on $A$. 
\end{cor}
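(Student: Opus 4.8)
The plan is to combine the two ingredients that immediately precede the corollary. Recall from \cref{thm:PotentialHarmonicMeasure} that, since $(G,c,\o)$ has a unique potential $h$, every finite set admits harmonic measure from infinity; in particular $\omega_\infty^A(\cdot)$ is well defined. On the other hand, \cref{claim:HUntilExit} says that, when we run the $h$-process $\{Y_n\}$ from the initial distribution $\mu_h$ until its last visit $L=L_A$ to $A$, the time-reversed trajectory $(Y_L,Y_{L-1},\dots,Y_0,\o)$ is distributed as the network random walk started from $\nu_A$ (the law of $Y_L$) and stopped at $\o$. So the task is to show $\nu_A=\omega_\infty^A$.

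The key step is to identify $\nu_A$ with $\omega_\infty^A$ directly via \cref{cor:PotentialsMixtureDipoles}. That theorem, applied with $D=A$, gives for every $x\in A$ that $h(x)=\E_{\mu_h}\g_\o(x,Y_L)=\sum_{z\in A}\nu_A(z)\g_\o(x,z)$. On the other hand, since the unique potential satisfies $h(x)=\lim_{v\to\infty}\g_\o(x,v)$, the first-entrance decomposition $\g_\o(v,x)=\sum_{z\in A\setminus\{\o\}}\omega_v^A(z)\g_\o(z,x)$ (used in the proof of \cref{lem:HarmonicMeasureDipoles}) passes to the limit $v\to\infty$, yielding $h(x)=\sum_{z\in A\setminus\{\o\}}\omega_\infty^A(z)\g_\o(z,x)$ for all $x\in A\setminus\{\o\}$. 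Thus both the vector $(\nu_A(z))_{z\in A\setminus\{\o\}}$ and the vector $(\omega_\infty^A(z))_{z\in A\setminus\{\o\}}$ (note $\omega_\infty^A(\o)=0$ because $\o\in S_h$ forces $Y_L\neq\o$, and the random walk from infinity a.s.\ hits $A\setminus\{\o\}$ before a point that is already at $\o$) are solutions of the same linear system $M\lambda=(h(x))_{x\in A\setminus\{\o\}}$, where $M=(\g_\o(x,y))_{x,y\in A\setminus\{\o\}}$. Since $M$ is invertible by \cref{lem:HarmonicMeasureDipoles}, the two vectors coincide, so $\nu_A=\omega_\infty^A$.

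The only genuinely delicate point is the bookkeeping about the atom at $\o$: one must check that $\nu_A$ gives no mass to $\o$, which is immediate since $Y_L\in S_h$ and $\o\notin S_h$, and symmetrically that $\omega_\infty^A(\o)=0$, which follows because, for $v$ far from $A$, the walk from $v$ enters $A$ through $\partial A$, and $\o$ is not a hitting point in the limit — more carefully, $\omega_v^{\{\o,z\}}(z)=\g_\o(v,z)/\g_\o(z,z)\to h(z)/\g_\o(z,z)$ which sums (over the relevant competing sites) to account for all the mass, leaving nothing at $\o$. Apart from this, the argument is just linear algebra on top of results already proved, so I expect no real obstacle; the ``main difficulty'' is merely making sure the finite set $A$ is handled via its intersection with $S_h$, exactly as in the proof of \cref{cor:PotentialsMixtureDipoles}, so that $M$ is invertible on the correct index set.
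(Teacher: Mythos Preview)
Your proof is correct and takes a genuinely different route from the paper. The paper's argument is probabilistic: it applies the path-reversal \cref{claim:HUntilExit} not to $A$ itself but to a large ball $D\supset A$, observes that the first entry into $A$ of the reversed walk $(Y_{L_D},\dots,Y_0,\o)$ is exactly $Y_{L_A}$ (hence has law $\nu_A$), and then lets $D\uparrow V$ so that the starting law $\nu_D$ recedes to infinity, identifying the first-entry law with $\omega_\infty^A$. Your argument is instead linear-algebraic: you recognise that \cref{cor:PotentialsMixtureDipoles} (with $D=A$) and the limiting first-entrance decomposition from \cref{lem:HarmonicMeasureDipoles} exhibit $\nu_A$ and $\omega_\infty^A$, respectively, as solutions of the same square system $M\lambda=(h(x))_{x\in A\setminus\{\o\}}$, and conclude by invertibility of $M$. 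This is essentially \cref{thm:PotentialHarmonicMeasure}(b) read backwards through Cramer's rule. Your route sidesteps the auxiliary exhaustion in $D$; the paper's route is more geometric and makes the interpretation as ``random walk from infinity'' (\cref{rmrk:cool2}) immediate. Two minor clean-ups: your separate justification of $\omega_\infty^A(\o)=0$ is unnecessary and somewhat garbled---once the linear algebra gives $\nu_A(z)=\omega_\infty^A(z)$ for $z\in A\setminus\{\o\}$, equality at $\o$ follows automatically since both are probability measures on $A$ and $\nu_A(\o)=0$; and to invoke \cref{cor:PotentialsMixtureDipoles} and the first-entrance identity as stated you need $\o\in A$, so if $\o\notin A$ simply adjoin it (this changes neither $\nu_A$ nor, for distant $v$, the harmonic measure $\omega_v^A$ on $A$).
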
 
\begin{proof} Let $\{Y_n\}$ be the $h$-process started, as usual, from the initial distribution $\mu_h$ and let $D$ be any set containing $A$. By \cref{claim:HUntilExit}, the reversed path $(Y_{L_D},\ldots,Y_0,\o)$ is distributed as network random walk starting from $\nu_D$ and stopped at $\o$. The first vertex in $A$ on this reversed path is $Y_{L_A}$ which has law $\nu_A$.  
Taking $D$ to be a ball of radius $R$ tending to infinity, it follows that harmonic measure from infinity on $A$ also equals $\nu_A$.
\end{proof}


\begin{remark}\label{rmrk:cool2} Let $\{Y_n\}$ be the $h$-process in a recurrent rooted network with a unique potential $h$. Motivated by the preceding corollary, we define the {\bf random walk from infinity to $\o$} in this network to be the process
$\{\widetilde{Y}_{k}\}_{k=-\infty}^0$, where $\widetilde{Y}_{k}:=Y_{-k}$ for $k < 0$. \cref{cor:ExitMeasureHarmonicMeasure} implies that for every finite set $A \subset V$ that contains all neighbors of $\o$,
  harmonic measure from infinity on $A$ is the law of $\widetilde{Y}_{\tau}$, where $\tau :=\min\big\{k<0: \widetilde{Y}_{k} \in A\big\}$.
\end{remark}

In a rooted network that admits a unique potential $h$, we can give a short proof that $h$ tends to infinity, using \cref{cor:ExitMeasureHarmonicMeasure} and \cref{thm:PotentialHarmonicMeasure}. Berestycki and van Engelenburg \cite[Proposition 5.1]{BereEngel} proved this in the special case of unimodular random graphs and asked whether \cref{thm:unique} holds. 

\begin{cor}\label{thm:unique} Let $(G,c,\o)$ be an infinite, connected, locally finite recurrent rooted network that has a unique potential $h$. Then $h$ tends to infinity, i.e., for every $b>0$, the set $\{v \in V : h(v) \leq b\}$ is finite.
\end{cor}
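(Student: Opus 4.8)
The plan is to prove the contrapositive-flavored statement directly: assuming $h$ is the unique potential, show that for every $b>0$ the sublevel set $\{v:h(v)\le b\}$ is finite. I would first recall, via \cref{thm:PotentialHarmonicMeasure}, that uniqueness of the potential is equivalent to the existence of harmonic measures from infinity on every finite set, and that \cref{cor:ExitMeasureHarmonicMeasure} identifies the harmonic measure from infinity on a finite set $A$ (containing all neighbors of $\o$, so that $\mu_h(A)=1$) with the last-exit distribution $\nu_A$ of the $h$-process. The combination of \cref{cor:PotentialsMixtureDipoles} and \cref{cor:ExitMeasureHarmonicMeasure} should give, for $x$ in such a set $A$, the clean formula $h(x)=\E_{\mu_h}\g_\o(x,Y_{L_A}) = \sum_{z\in A}\omega_\infty^A(z)\,\g_\o(x,z)$.

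The heart of the argument is then to contradict finiteness. Suppose $\{v:h(v)\le b\}$ is infinite for some $b$. I would fix a large finite set $A$ containing $\o$ and all its neighbors, and consider the $h$-process $\{Y_n\}$ with initial distribution $\mu_h$; it is transient (\cref{claim:GhMu}) and, by \cref{lem:hProcessInfty}, $h(Y_n)\to\infty$ in probability, and in fact one should be able to upgrade this to $h(Y_n)\to\infty$ along the transient trajectory (using that $\{h(Y_n)\}$ is the $h$-process and Green kernel bounds, or directly that $\{v:h(v)\le b\}$ being visited infinitely often would contradict transience if that set were finite — which is exactly what we are trying to rule out). The key tension: on one hand $h(x) = \E_{\mu_h}\g_\o(x,Y_{L_A})$ is a fixed finite number for each $x\in A$; on the other hand, by Fatou and \cref{claim:compact}, any subsequential limit of $\g_\o(\cdot, y_n)$ as $y_n\to\infty$ is the potential $h$ itself. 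The plan is to let $x$ range over larger and larger balls: if the sublevel set $\{h\le b\}$ is infinite, one can pick $y_n\to\infty$ inside it with $\g_\o(x,y_n)\to h(x)$ for all $x$, but then summing against harmonic measure and using the determinantal / mixture identity forces a contradiction with $h$ being unbounded on that infinite set while staying $\le b$ — more precisely, I expect to derive that $h$ would have to be bounded by $b$ plus a uniformly controlled error term on an exhausting sequence, hence bounded everywhere, contradicting that every potential on a recurrent network is unbounded (stated just before \cref{lem:lipschitz1}).

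Concretely, here is the cleanest route I would attempt. Take $A=\{\o\}\cup\{\text{neighbors of }\o\}$, so $\mu_h(A)=1$ and $h(x)=\sum_{z\in A\setminus\{\o\}}\omega_\infty^A(z)\g_\o(x,z)$ for $x\in A$; extend this by running the $h$-process: for any finite $D\supseteq A$ with $\mu_h(D)=1$, \cref{cor:PotentialsMixtureDipoles} gives $h(x)=\E_{\mu_h}\g_\o(x,Y_{L_D})$ for all $x\in D$. Now fix $b$ and suppose $F:=\{v:h(v)\le b\}$ is infinite. By transience of the $h$-process, $Y_{L_D}$ escapes every finite set as $D$ grows, i.e. $\P^h_{\mu_h}(Y_{L_D}\in F)\to 0$ — wait, $Y_{L_D}\in\partial D$ roughly, and for large $D$, $\partial D$ lies outside any fixed finite piece of $F$, but $F$ is infinite so $Y_{L_D}$ could still land in $F$. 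The decisive point is instead: $h(Y_{L_D})\ge$ something growing, because $Y_{L_D}$ is forced far out and $h$ restricted to the $h$-process trajectory tends to infinity a.s. (this follows from \cref{lem:hProcessInfty} together with transience — I would make this rigorous by noting $\{h(Y_n)\}$ visits each sublevel set only finitely often a.s., since otherwise the trajectory returns infinitely often to a finite set, contradicting transience, provided each sublevel set is finite — but that is the claim). So instead I argue: if $F$ is infinite, pick $y_n\in F$, $y_n\to\infty$; by \cref{claim:compact} and uniqueness, $\g_\o(\cdot,y_n)\to h(\cdot)$ pointwise; but $\g_\o(y_n,y_n)=\Reff(\o\lr y_n)$, and by \cref{cor:PotentialLipschitz}, $h(y_n)\le\Reff(\o\lr y_n)$; combined with part (a) of \cref{thm:PotentialHarmonicMeasure}, $h(y_n)=\omega_\infty^{\{\o,y_n\}}(y_n)\Reff(\o\lr y_n)$. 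Since $h(y_n)\le b$ stays bounded while $\Reff(\o\lr y_n)\to\infty$ in a recurrent network along any sequence going to infinity, we get $\omega_\infty^{\{\o,y_n\}}(y_n)\to 0$. I then use \cref{cor:ExitMeasureHarmonicMeasure}: $\omega_\infty^{\{\o,y_n\}}(y_n)$ equals the last-exit probability of the $h$-process at $y_n$ on the two-point set $\{\o,y_n\}$, i.e. the probability the $h$-process from $\mu_h$ ever visits $y_n$ and exits $\{\o,y_n\}$ last there, which up to the escape structure is comparable to $\P^h_{\mu_h}(\text{hit }y_n)$. Summing these hitting probabilities over $y_n$ ranging through $F$ and comparing with the expected number of visits the transient $h$-process makes to $F$ should yield the contradiction: $\sum_{v\in F}\P^h_{\mu_h}(\text{hit }v)$ must be finite by transience and the structure of $F$, forcing $F$ finite.

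The main obstacle I anticipate is making the last step quantitative — turning "$\omega_\infty^{\{\o,v\}}(v)$ is small for $v$ far out" into a genuine contradiction with the infinitude of $F$. The clean mechanism is: from part (a), $h(v)=\omega_\infty^{\{\o,v\}}(v)\,\g_\o(v,v)$, and by \cref{cor:ExitMeasureHarmonicMeasure} applied with $A\ni\o$ a fixed finite set of neighbors and then running the $h$-process, one gets an identity expressing $\g_\o(v,v)\,\omega_\infty^{\{\o,v\}}(v)$ in terms of how often the $h$-process trajectory (equivalently, its time-reversal, the random walk from infinity, per \cref{rmrk:cool2}) passes through $v$. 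Because the $h$-process is transient, $\sum_v \G^h(\mu_h,v)\cdot(\text{something})$-type sums over $v\in F$ converge; but $\G^h(\mu_h,v)=h(v)c_v\le b\,c_v$ for $v\in F$ by \eqref{eq:Gh}, and transience forces $\sum_{v}\P^h_{\mu_h}(\text{hit }v)\,[\text{escape prob from }v]$ to be finite, while each term for $v\in F$ is bounded below (the escape probability from $v$ is bounded below because $\Reff(\o\lr v)\to\infty$ makes $\o$ hard to hit, so $\G^h(v,v)$ and hence the return structure is controlled). Pinning down this lower bound uniformly over $v\in F$, using $h(v)\le b$ to control $\G^h(v,v)=\G_\o(v,v)\le$ (something)$/h(v)$-type bounds carefully, is where the real work lies; everything else is assembling already-proven identities.
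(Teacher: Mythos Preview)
Your proposal identifies several correct ingredients but does not close, and the route you commit to in the last two paragraphs cannot be completed. You correctly derive that for $y_n \in F := \{v : h(v) \le b\}$ with $y_n \to \infty$, part (a) of \cref{thm:PotentialHarmonicMeasure} together with $\Reff(\o \lr y_n) \to \infty$ (valid in any recurrent network, since $\Reff(\o\lr y_n)\ge \Reff(\o\lr B_R^c)$) gives $\omega_\infty^{\{\o,y_n\}}(y_n) \to 0$. But this is no contradiction: vanishing two-point harmonic measures at individual far-away vertices are perfectly compatible with $F$ being infinite. Your assertion that transience forces $\sum_{v\in F}\P^h_{\mu_h}(\text{hit }v)$ or $\sum_{v\in F}\P^h_{\mu_h}(\text{hit }v)\cdot[\text{escape prob from }v]$ to be finite is wrong --- transience bounds each $\G^h(\mu_h,v)$ but not sums over an infinite set. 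Moreover, the escape probability $1/\G^h(v,v)=1/\G_\o(v,v)=1/\bigl(c_v\Reff(\o\lr v)\bigr)$ tends to \emph{zero}, not to a positive constant, so your ``lower bound on terms'' points the wrong way.

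You actually brushed against the paper's argument in your third paragraph and abandoned it for the wrong reason. You wanted ``$h(Y_{L_D})\ge$ something growing'' but feared this requires $h(Y_n)\to\infty$ \emph{almost surely}, which you deemed circular. It is not circular: $1/h(Y_n)$ is a nonnegative supermartingale for the $h$-process (immediate from $p^h(x,y)=p(x,y)h(y)/h(x)$), hence converges a.s., and \cref{lem:hProcessInfty} identifies the limit as $0$; thus $h(Y_n)\to\infty$ a.s.\ with no hypothesis on sublevel sets. The paper then fixes one large ball $A$ so that the last-exit law satisfies $\nu_A(\{y:h(y)\ge 4b\})\ge \tfrac12$. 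For each $v_n\in F$ outside $A$, optional stopping for the nonnegative martingale $t\mapsto h(X_{t\wedge\tau_A})$ started at $v_n$ gives $b\ge h(v_n)\ge 4b\cdot\omega_{v_n}^A(\{h\ge 4b\})$, so $\omega_{v_n}^A(\{h\ge 4b\})\le\tfrac14$. Since $\omega_{v_n}^A\to\omega_\infty^A=\nu_A$ by \cref{thm:PotentialHarmonicMeasure} and \cref{cor:ExitMeasureHarmonicMeasure}, this contradicts $\nu_A(\{h\ge 4b\})\ge\tfrac12$. The decisive move you missed is to fix the finite target set $A$ and send the \emph{starting point} $v_n\in F$ to infinity, rather than sending the target to infinity.
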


\begin{remark}
    This result is generalized in the next section. 
\end{remark}

\begin{proof}[Proof of \cref{thm:unique}.]
 We will assume that for some $b>0$, there is a sequence $v_n\to \infty$   such that $h(v_n) \leq b$ and obtain a contradiction. By \cref{lem:hProcessInfty}, if $A$ is a large enough ball around $\o$, then the last exit distribution $\nu_A$ of the $h$-process from $A$ satisfies 
\be\label{eq:nuA} \nu_A\Big ( \{ y \in A : h(y) \geq 4b \} \Big ) \geq 1/2 \, .\ee
Optional stopping for the positive martingale $h(X_{t \wedge \tau_A})$, where   $X_0=v_n$, gives 
$$b \ge h(v_n) \ge \E[h(X_{ \tau_A})] \ge 4b\P(h(X_{ \tau_A}\ge 4b))\,,$$
so
\be\label{eq:omega} \omega_{v_n}^A(\{ y \in A : h(y) \geq 4b \}) \leq 1/4 \, .\ee

By \cref{thm:PotentialHarmonicMeasure} the set $A$ admits harmonic measure from infinity, and by \cref{cor:ExitMeasureHarmonicMeasure} this measure equals $\nu_A$, so the measures
$\omega_{v_n}^A$ tend  to $\nu_A$ as $n \to \infty$. Comparing \eqref{eq:nuA} to \eqref{eq:omega} yields the desired contradiction. 
\end{proof}

\section{Existence of potentials tending to infinity} 
\label{sec:MainProof}

In 1936, G.C.~Evans \cite{Evans36} showed that every compact polar set $K$ in $\R^d$ supports a probability measure $\mu$ such that its Newtonian potential $h_\mu$ is infinite on $K$, see also \cite{Tsuji59}. Thus $h_\mu$ is a  harmonic function on $\R^d \setminus K$ that tends to infinity on every sequence converging to $K$. Using Evans' approach, Nakai \cite{Nakai62} proved that every parabolic noncompact Riemann surface $S$ admits a function $h: S \to [0, \infty)$ which is harmonic off a compact set and tends to infinity (i.e., its level sets $\{x \in S: h(x) \le M\}$ are compact). In Nakai's proof, the set $K$ is replaced by the Stone-\v Cech boundary of $S$. The discrete analog of these results is
\cref{conj:main}, which can also be found in our recent note~\cite{NP2025}.

We say that a function $h:V\to [0,\infty)$ {\bf tends to infinity} 
if for all $M>0$, the set $\{ v \in V : h(v) \leq M \}$ is finite. Observe that on $\Z$, the potentials $h_1(n)=\max(n,0)$ for $n\in \Z$ and $h_2(n)=\max(-n, 0)$ do \emph{not} tend to infinity, but any non-trivial convex combination of them does. On $\Z^2$, on the other hand, it is  well known, see \cite[Sections 12 and 31]{Spitzer}, that there is a unique potential $h$ with $h(v) = \Theta(\log |v|)$ as $v \to \infty$. 

\begin{thm}\label{conj:main} On every recurrent rooted network $(G,c,\o)$, there exists a potential tending to infinity.
\end{thm}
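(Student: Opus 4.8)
The plan is to imitate Evans' classical construction via the von Neumann minimax theorem, as hinted in the abstract. The key object is the set $\mathcal{H}$ of all potentials on $(G,c,\o)$, which by \cref{claim:compact} is nonempty, convex, and compact in the product topology on $\R^V$. Fix an exhaustion $\{D_n\}$ of $G$ with $\o \in D_1$. For each $n$, consider the "payoff" $\min_{v \notin D_n} h(v)$ as a function of $h \in \mathcal{H}$; I would like to show this supremum is bounded below away from $0$ uniformly in... no, rather I want $\sup_{h \in \mathcal{H}} \min_{v \notin D_n} h(v) \to \infty$, and then extract a potential tending to infinity by a diagonal argument. Concretely, I would try to produce, for each $n$, a potential $h_n \in \mathcal{H}$ with $\min_{v\notin D_n} h_n(v) \ge M_n$ where $M_n \to \infty$, and then take a suitable convex combination $h = \sum_k 2^{-k} h_{n_k}$ along a rapidly growing subsequence: since potentials form a convex set closed under such infinite convex combinations (each $h_{n_k} \ge 0$ and the Laplacian and normalization are preserved by convex combinations, with $\ell^1$ convergence guaranteeing $\Delta$ passes through the sum), the limit is a potential, and by choosing $n_k$ so that $2^{-k}M_{n_k} \to \infty$ while the lower-order terms stay nonnegative, $h$ tends to infinity.

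The crux is therefore the estimate $\sup_{h \in \mathcal{H}} \min_{v \notin D_n} h(v) \to \infty$ as $n \to \infty$. This is where von Neumann enters. Consider the two-player zero-sum game in which player I chooses $h \in \mathcal{H}$ and player II chooses a probability measure $\pi$ on $V \setminus D_n$ (finitely supported, say), with payoff $\langle \pi, h\rangle = \sum_v \pi(v) h(v)$ to player I. Both strategy spaces are convex, player II's can be taken compact after truncation, and the payoff is bilinear and continuous, so the minimax theorem gives
$$
\sup_{h \in \mathcal{H}} \inf_{\pi} \langle \pi, h \rangle \;=\; \inf_{\pi} \sup_{h \in \mathcal{H}} \langle \pi, h\rangle \,.
$$
The left side is exactly $\sup_{h\in\mathcal{H}} \min_{v\notin D_n} h(v)$ (the inf over $\pi$ is attained at a point mass at the minimizing vertex). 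So it suffices to show the right side tends to infinity, i.e. that for every finitely supported probability measure $\pi$ on $V\setminus D_n$ there is a potential $h$ with $\langle \pi, h\rangle$ large, with a lower bound uniform over $\pi$ (supported outside $D_n$) and growing with $n$.

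To bound $\sup_{h}\langle \pi,h\rangle$ from below for a fixed $\pi$ supported on $V \setminus D_n$: the natural candidate potential is a mixture of dipole-type objects, namely $h_\pi := \sum_v \pi(v)\, \g_\o(\cdot, v)$, which is a genuine potential (it is nonnegative, vanishes at $\o$, and $\Delta h_\pi = \1_\o$ since each $\g_\o(\cdot,v)$ is a dipole and $\pi$ is a probability measure — note the $\1_v$ terms do not all cancel unless $\pi$ is a point mass, so one must be slightly careful: in fact $\Delta h_\pi(v) = -\pi(v) \ne 0$, so $h_\pi$ is \emph{not} a potential unless $\pi$ is a point mass). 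The fix is to evaluate against dipoles more cleverly: for a point mass $\pi = \delta_w$ with $w \notin D_n$, $h = \g_\o(\cdot,w)$ is a potential and $\langle \pi, h \rangle = \g_\o(w,w) = \Reff(\o \lr w)$, which is large when $w$ is far from $\o$ because a net crossing argument (or the series definition) shows $\Reff(\o\lr w) \ge \dist(\o, D_n^c)\cdot (\inf_e c_e)^{-1}$... but $\inf_e c_e$ may be $0$, so instead I would use that $\Reff(\o \lr w) \to \infty$ along any exhaustion, hence $\min_{w \notin D_n}\Reff(\o\lr w) \to \infty$ after possibly passing to a subexhaustion. For general $\pi$, one applies \cref{cor:PotentialsMixtureDipoles}: since $h_\pi(x) = \sum_v \pi(v)\g_\o(x,v)$ restricted to a finite set is represented as $\E_{\mu_{h_\pi}} \g_\o(x, Y_L)$, and conversely any potential is such a mixture, the supremum of $\langle\pi, h\rangle$ over potentials $h$ is at least the value obtained by choosing $h = \g_\o(\cdot, w_0)$ for a single $w_0$ in the support of $\pi$, giving $\langle \pi, \g_\o(\cdot,w_0)\rangle \ge \pi(w_0)\g_\o(w_0,w_0)$; picking $w_0$ with $\pi(w_0)$ maximal among the (at most $|D_{n+1}\setminus D_n|$, wait — $\pi$ has arbitrary finite support) vertices of support loses too much. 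The clean route is the minimax identity itself: the right-hand side $\inf_\pi \sup_h \langle\pi,h\rangle$ equals $\sup_h \min_{w\notin D_n} h(w)$, and for the single potential $h = \g_\o(\cdot, w)$ with $w$ ranging we only need \emph{one} potential that is uniformly large outside $D_n$.

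\textbf{Main obstacle.} The real difficulty, and the place I expect the argument to need the most care, is producing a \emph{single} potential bounded below by a large constant on all of $V\setminus D_n$ — a single dipole $\g_\o(\cdot,w)$ is large near $w$ but need not be large elsewhere (it is harmonic away from $w$ and could dip). This is precisely what the minimax theorem is designed to overcome: $\inf_\pi \sup_{h\in\mathcal H}\langle \pi,h\rangle$ asks only that for each adversarial $\pi$ there be \emph{some} potential large on $\pi$'s support, which the dipole $\g_\o(\cdot, w_0)$ (for $w_0$ a support vertex) nearly provides, while the equality to $\sup_h \inf_\pi$ then manufactures the uniform potential for free. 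So the heart of the proof is: (i) verify the minimax hypotheses carefully, in particular compactness of player II's strategies (truncate to measures on $D_{n+1}\setminus D_n$, say, or argue by weak-* compactness of probability measures with a tightness/escape-to-infinity control coming from recurrence), and (ii) prove the quantitative lower bound $\inf_{\pi}\sup_{h}\langle\pi,h\rangle \ge M_n$ with $M_n\to\infty$, which reduces via the dipole $\g_\o(\cdot,w_0)$ to the geometric fact that $\g_\o(w,w) = \Reff(\o\lr w) \to\infty$ as $w\to\infty$ (true in any recurrent network since $\sup_y \g_\o(x,y) < \infty$ would contradict recurrence of the reversed chain — more directly, if $\Reff(\o\lr w_k)$ stayed bounded along $w_k\to\infty$, the network would be transient). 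Granting (i) and (ii), the diagonal/convex-combination step at the end is routine.
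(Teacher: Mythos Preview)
Your overall architecture matches the paper's: reduce via a convex-combination lemma (the paper's \cref{lem:reduction1}) to showing that $\sup_{h\in\mathcal H}\min_{v\in\partial B_r} h(v)\to\infty$, and establish this by a minimax identity over the compact convex set $\mathcal H$ against probability measures on a finite sphere. The truncation of player II to a sphere is justified by the maximum principle, exactly as in the paper.

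The genuine gap is your step (ii), the lower bound $\inf_\pi \sup_{h\in\mathcal H}\langle\pi,h\rangle\ge M_n\to\infty$. You propose to obtain it ``via the dipole $\g_\o(\cdot,w_0)$ for $w_0$ a support vertex'' and the fact that $\g_\o(w,w)=\Reff(\o\lr w)\to\infty$. This does not work, for two reasons. First, $\g_\o(\cdot,w_0)$ is a dipole, not a potential (its Laplacian is $\mathbf 1_\o-\mathbf 1_{w_0}$), so it is not an admissible strategy for player I; you noticed this earlier for mixtures but then reintroduced the error. Second, and more importantly, $\Reff(\o\lr w)$ is an \emph{upper} bound for $h(w)$ (by \cref{cor:PotentialLipschitz}), not a lower bound for $\sup_{h\in\mathcal H}h(w)$. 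Even in the point-mass case $\pi=\delta_w$, the statement $\sup_{h\in\mathcal H}h(w)\to\infty$ is already essentially as hard as the theorem; nothing you wrote proves it. For general $\pi$, your bound $\langle\pi,\g_\o(\cdot,w_0)\rangle\ge\pi(w_0)\g_\o(w_0,w_0)$ (even setting aside that this is not a potential) degenerates when $\pi$ spreads its mass, as you yourself remark.

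The paper closes this gap with a genuinely different idea that uses the $h$-process. One takes, for each $r$, an optimal measure $\eta_r$ for player II, passes to a subsequence along which the mixtures $\g_\o(\cdot,\eta_{r_n})$ converge to some potential $h$, and then shows $\langle\eta_{r_n},h\rangle\ge M$ for large $n$ (contradicting optimality if $V_2(\infty,r_n)<M$). That last inequality is the heart of the matter: it combines (a) the fact that the $h$-process reaches large $h$-values (\cref{lem:hProcessInfty}), (b) weak convergence of walks conditioned to hit $v$ before $\o$ toward the $h$-process as $\g_\o(\cdot,v)\to h$ (\cref{lem:PathsConverge}), and (c) a path-reversal/optional-stopping estimate $\P_\o(h(X_j)\ge 2M\mid \tau_v<\tau_\o^+)\le h(v)/(2M)$ (\cref{lem:dipolePotential}). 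Averaging (c) against $\eta_{r_n}$ and comparing with (a)--(b) forces $\langle\eta_{r_n},h\rangle\ge M$. None of this is captured by the effective-resistance growth you invoke; that growth is necessary but far from sufficient here.
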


\begin{remark}
The argument used by Nakai \cite{Nakai62} in the setting of Riemann surfaces can be adapted to prove \cref{conj:main}, but this argument is non-constructive.  Here we present a more constructive probabilistic approach, using the Von Neumann minimax theorem for finite zero sum games. In \cite{NP2025}, the Sion minimax theorem was used instead. 
\end{remark}


For any integer $R>0$, we set
$ \partial B(\o,R) = \{ w : d(\o,w)=R\} \, ,$
where $d(\cdot,\cdot)$ is the graph distance in $G$.

\begin{lemma}\label{lem:reduction1} Let $(G,c,\o)$ be a recurrent rooted network. Assume that for every $M>0$ there exists an integer $R>0$ and a potential $\psi=\psi_{M,R}$ such that $\psi(v) \geq M$ for any $v\in \partial B(\o,R)$.
Then there exists a potential $h$ that tends to infinity.
\end{lemma}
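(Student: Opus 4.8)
The plan is to build $h$ as an infinite convex combination of the potentials $\psi_{M,R}$ supplied by the hypothesis, taken along a rapidly growing sequence of levels. Concretely, for each $k\ge 1$ apply the hypothesis with $M=4^k$ to get an integer $R_k>0$ and a potential $\psi_k:=\psi_{4^k,R_k}$ with $\psi_k(v)\ge 4^k$ for every $v\in\partial B(\o,R_k)$, and set
$$ h:=\sum_{k=1}^\infty 2^{-k}\psi_k\,. $$
By \cref{cor:PotentialLipschitz}, $\psi_k(v)=|\psi_k(v)-\psi_k(\o)|\le\Reff(\o\lr v)$ for every $v$ and every $k$, so the series defining $h(v)$ converges and $h(v)\le\Reff(\o\lr v)<\infty$. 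Since at any vertex $\Delta$ involves only finitely many neighbors, one may interchange $\Delta$ with the sum, obtaining $h(\o)=0$, $\Delta h(v)=\sum_k 2^{-k}\Delta\psi_k(v)=0$ for $v\ne\o$, and $\Delta h(\o)=\sum_k 2^{-k}=1$; as $h\ge 0$ as well, $h$ is a potential.

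The one substantive point is that the hypothesis only controls $\psi_k$ on the single sphere $\partial B(\o,R_k)$, whereas to show $h$ tends to infinity we need a lower bound on $\psi_k(v)$ for \emph{all} $v$ far from $\o$. So the key step is: any potential $\psi$ with $\psi\ge M$ on $\partial B(\o,R)$ (for $R\ge 1$) automatically satisfies $\psi(v)\ge M$ for every $v$ with $d(\o,v)\ge R$. To see this, run the network random walk $\{X_n\}$ from such a $v$ and let $\sigma$ be the first time $n$ with $d(\o,X_n)=R$. By recurrence the walk reaches $\o$ a.s., and along any trajectory the distance to $\o$ changes by at most $1$ per step, so starting from $d(\o,v)\ge R$ it must attain the value $R$ strictly before reaching $\o$; hence $\sigma<\tau_\o<\infty$ a.s.\ and $X_\sigma\in\partial B(\o,R)$. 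Since $\psi$ is harmonic off $\o$ and nonnegative, $\psi(X_{n\wedge\sigma})$ is a nonnegative martingale, so by optional stopping together with Fatou's lemma (exactly as in the proof of \cref{claim:uniquenessDoesNotDependOnRoot}), $\psi(v)=\lim_n\E_v[\psi(X_{n\wedge\sigma})]\ge\E_v[\psi(X_\sigma)]\ge M$.

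Applying this propagation to each $\psi_k$ gives $h(v)\ge 2^{-k}\psi_k(v)\ge 2^{-k}4^k=2^k$ whenever $d(\o,v)\ge R_k$. Hence, for any $M>0$, choosing $k$ with $2^k>M$ yields $\{v\in V:h(v)\le M\}\subseteq B(\o,R_k)$, which is finite because $G$ is locally finite. Therefore $h$ tends to infinity, as desired. The only real obstacle is the propagation step in the previous paragraph; the rest is routine bookkeeping about convex combinations of potentials.
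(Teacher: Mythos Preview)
Your proof is correct and follows essentially the same approach as the paper: the propagation of the lower bound from $\partial B(\o,R)$ to all of $V\setminus B(\o,R)$ via the positive martingale $\psi(X_{t\wedge\sigma})$, followed by taking a convex combination $\sum 2^{-k}\psi_k$ along a geometrically growing sequence of levels. The only cosmetic differences are your choice of $M=4^k$ (the paper uses $M=n2^n$) and your appeal to \cref{cor:PotentialLipschitz} for pointwise convergence of the series where the paper invokes the compactness in \cref{claim:compact}.
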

\begin{proof} First observe that for every $R$ and $M$, if $v \not \in B(\o,R)$, then $\psi(v)=\psi_{M,R}(v) \geq M$. Indeed, let $\{X_t\}$ be the network random walk started at $v$ and let $\tau_R$ be the hitting time of $\partial B(\o,R)$. Then $\psi(X_{t\wedge \tau_R})$ is a positive martingale, so by recurrence and optional stopping,
$$ \psi(v) \geq \E [ \psi(X_{\tau_R})] \geq M \, .$$
For $n\geq 1$, let $R_n$ and $\psi_n$ denote the radii and potentials corresponding to $M=n 2^n$ in the hypothesis of the lemma. By \cref{claim:compact},
$$ h := \sum_{n=1}^\infty 2^{-n} \psi_n$$
is a potential. Next, if $v\notin B(\o,R_n)$, then $h(v) \geq 2^{-n} \psi_n(v) \geq n$. Thus, $\{v : h(v) < n\}$ is finite for every $n$. 
\end{proof}

\begin{remark}\label{thm:conjectureOnTrees} In the particular case where the graph $G$ is a tree $T$, a potential tending to infinity can be constructed explicitly using the preceding lemma. Indeed, let $T_\infty\subset T$ be the union of all simple infinite paths emanating from the root. Let $M> 0$ be arbitrary. Given an integer $R>0$, let $\g_R(\cdot,\cdot)$ be the green density for the network random walk killed on the complement of the ball $B_R:=\{x \in T_\infty : d(x,\o)<R\}$. By recurrence of $T_\infty$, we can choose $R$ such that $\g_R(\o,\o)\geq M$. Set 
$$ \psi(x) = \g_R(\o,\o) - \g_R(x,\o) \, ,$$
for each $x\in B_R \cup \partial B_R$. We now extend $\psi$ to $T_\infty$ as follows. For each $v\in \partial B_R$ choose an infinite simple path $v=v_0, v_1,v_2,\ldots$ such that $v_1$ is contained in $T_\infty \setminus B$. Let $v_{-1}$ be the parent of $v$ in $T_\infty$. We  set $\psi(v_n)= \psi(v)+n(\psi(v)-\psi(v_{-1}))$ for all $n \ge 1$. We have defined $\psi$ on a subtree $T_\infty ' \subset T_\infty$ and we extend by projection: for any vertex $w\in T_\infty$ set $\psi(w)=\psi(w_*)$ where $w_*$ is the closest vertex to $w$ in $T_\infty'$. Thus $\psi$ is a potential on $T_\infty$ such that its values on $T_\infty \setminus B_{T_\infty}(\o,R)$ are larger than $M$. By \cref{lem:reduction1}, there exists a potential $h$ on $T_\infty$ which tends to infinity. We extend $h$ to $T$ by projection.
\end{remark}

For integer $r\geq 1$, let $\Upsilon_r$ be the simplex of probability measures on $\partial B_r$ and define
\begin{eqnarray*} V_1(\infty, r) &:=& \max _{\psi \in \pot} \min _{\eta \in \Upsilon_r} \psi \cdot \eta = \max _{\psi \in \pot} \min _{v \in \partial B_r} \psi(v) \, ,\\ 
V_2(\infty,r) &:=& \min _{\eta \in \Upsilon_r} \max_{\psi\in \pot} \psi \cdot \eta \, .\end{eqnarray*}
These can be interpreted as the values for players $1$ and $2$ of the zero-sum game where player $1$ selects $\psi\in \pot$, player $2$ selects $v\in \partial B_r$ and the payoff is $\psi(v)$. 

\begin{lemma} \label{lem:minimax} For each integer $r\geq 1$ 
$$ V_1(\infty,r) = V_2(\infty,r) \, .$$
\end{lemma}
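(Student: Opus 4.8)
The plan is to deduce \cref{lem:minimax} from the Von Neumann minimax theorem for finite zero-sum games. The obstacle is that, as stated, neither the space of potentials $\pot$ nor its restriction to a finite sphere is obviously a finite set, whereas Von Neumann's theorem is stated for finite games; so the first and main task is a reduction to a genuinely finite game. First I would observe that $V_1(\infty,r)\le V_2(\infty,r)$ always holds (a player choosing second has at least as much information), so only the reverse inequality requires work.

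\medskip

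\noindent\textbf{Reduction to a finite game.} Fix $r$ and let $S:=\partial B(\o,r)$, a finite set, say $S=\{v_1,\dots,v_k\}$. The crucial point is that the ``payoff vector'' of a potential $\psi$ is the finite-dimensional vector $\big(\psi(v_1),\dots,\psi(v_k)\big)\in\R^k$, and by \cref{claim:compact} the set $\pot$ is compact and convex, hence so is its image $K:=\{(\psi(v_1),\dots,\psi(v_k)):\psi\in\pot\}\subset\R^k$. A bilinear payoff over a mixed strategy $\eta\in\Upsilon_r$ is the linear functional $x\mapsto \sum_{i}\eta_i x_i$ on $K$; since $K$ is a compact convex set, the minimax identity
$$ \max_{x\in K}\ \min_{\eta\in\Upsilon_r}\ \sum_i \eta_i x_i \;=\; \min_{\eta\in\Upsilon_r}\ \max_{x\in K}\ \sum_i \eta_i x_i $$
is exactly the statement that a compact convex set and a simplex satisfy minimax, which follows either from Von Neumann's theorem applied to the extreme points of $K$ (finitely many suffice by Carath\'eodory after restricting to a finite $\varepsilon$-net and passing to a limit) or directly from the separating-hyperplane theorem. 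In fact the cleanest route: pick any $\eta^*$ achieving the minimum on the right; the set $\{x\in K:\sum_i\eta^*_i x_i\le V_2\}$ is nonempty, and one uses convexity of $K$ together with the fact that the function $\eta\mapsto\max_{x\in K}\sum\eta_i x_i$ is convex and piecewise-linear-like to locate, via Von Neumann on a suitable finite sub-game, a single $x\in K$ (equivalently a single potential $\psi$) with $\min_\eta \sum\eta_i\psi(v_i)\ge V_2$.

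\medskip

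\noindent\textbf{Carrying it out concretely.} I would make this rigorous by the standard compactness trick: for each $m$ choose a finite subset $\Psi_m\subset\pot$ whose image is a $1/m$-net of $K$; apply the finite Von Neumann theorem to the finite game with pure strategies $\Psi_m$ for player $1$ and $S$ for player $2$, obtaining optimal mixed strategies and a common value $V^{(m)}$; let $\sigma_m$ be player $1$'s optimal mixed strategy, and note $\psi_m:=\sum_{\psi\in\Psi_m}\sigma_m(\psi)\,\psi$ is again a potential by convexity (\cref{claim:compact}), with $\min_{v\in S}\psi_m(v)\ge V^{(m)}\ge V_2(\infty,r)-1/m$. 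Passing to a subsequential limit $\psi_\infty\in\pot$ (again \cref{claim:compact}, compactness), we get $\min_{v\in S}\psi_\infty(v)\ge V_2(\infty,r)$, hence $V_1(\infty,r)\ge V_2(\infty,r)$, which combined with the trivial inequality finishes the proof.

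\medskip

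\noindent The step I expect to be the main obstacle is the honest reduction to finiteness: $\pot$ is infinite, so one must either invoke a continuous minimax theorem (as in \cite{NP2025}, where Sion's theorem is used) or, as above, approximate by finite nets and take limits, and one must be careful that the limiting potential still lies in $\pot$ — this is exactly what \cref{claim:compact} guarantees. Everything else (the easy inequality, bilinearity over the simplex, convex combinations of potentials being potentials) is routine.
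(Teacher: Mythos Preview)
Your argument is correct, but it takes a genuinely different route from the paper's. The paper does \emph{not} approximate $\pot$ by finite nets; instead, for each $R>r$ it introduces the auxiliary finite game in which player~1 picks $w\in\partial B_R$, player~2 picks $v\in\partial B_r$, and the payoff is $\g_\o(w,v)$. Von~Neumann gives $V_1(R,r)=V_2(R,r)$ for this finite game. The link back to the original game is made via \cref{cor:PotentialsMixtureDipoles}: every potential, restricted to $\partial B_r$, is a mixture of the dipoles $\g_\o(\cdot,w)$ with $w\in\partial B_R$, which yields $V_2(\infty,r)\le V_2(R,r)$. For the other side, the optimal mixed strategies $\zeta_R\in\Upsilon_R$ are pushed to infinity and a subsequential limit of $\g_\o(\cdot,\zeta_{R_n})$ produces a potential $h$ with $\min_{v\in\partial B_r}h(v)=\lim_n V_1(R_n,r)$, giving $V_1(\infty,r)\ge\lim_n V_1(R_n,r)$.

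Your approach is more self-contained: it uses only the compactness and convexity of $\pot$ from \cref{claim:compact}, and never touches the dipole representation. The paper's approach, by contrast, makes the approximating games concrete (vertices on spheres rather than abstract nets) and reuses machinery already developed. One minor remark on your write-up: the subsequential limit $\psi_\infty$ at the end is unnecessary, since each $\psi_m$ already lies in $\pot$ and witnesses $V_1(\infty,r)\ge V_2(\infty,r)-1/m$; letting $m\to\infty$ in this inequality suffices.
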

As observed in \cite{NP2025}, this follows directly from Sion's minimax Theorem, but here we will derive it using approximation by finite games where the values can be calculated via linear programming.

\begin{proof}[Proof of \cref{lem:minimax}] The inequality 
$V_1(\infty,r) \leq V_2(\infty,r)$ clearly holds. Fix $r\geq 1$. For each integer $R>r$, consider the game where player $1$ chooses $w\in \partial B_R$, player $2$ chooses $v \in \partial B_r$ and the payoff is $\g_\o(w,v)$. By von Neumann's minimax Theorem, the values 
\begin{eqnarray*} V_1(R, r) &:=& \max _{\zeta \in \Upsilon_R} \min _{v \in \partial B_r} \g_\o(v, \zeta) \quad \text{and}\\ 
V_2(R,r) &:=& \min _{\eta \in \Upsilon_r} \max_{w \in \partial B_R} \g_\o(\eta, w) \, \end{eqnarray*}
are equal. By \cref{cor:PotentialsMixtureDipoles}, for each potential $\psi$ and each $v\in \partial B_r$, we have 
$$ \psi(v) = \E _{\mu_\psi} \g_\o(v,Y_{L_R})  \, ,$$
where $L_R$ is the last exit time of the $\psi$-process $\{Y_n\}$ from $B_R \cup \partial B_R$. Thus, for any $\eta \in \Upsilon_r$ and $\psi \in \pot$, we have $\psi \cdot \eta \leq \max_{w \in \partial B_R} \g_\o(\eta,w)$. Hence by definition 
$$ V_2(\infty,r) \leq \max_{\psi} \psi \cdot \eta \leq \max_{w \in \partial B_R} \g_\o(\eta,w) \, ,$$
for any $\eta \in \Upsilon_r$. Minimizing over $\eta$ gives $V_2(\infty,r) \leq V_2(R,r)$. On the other hand, for each $R>r$ there exists $\zeta_R \in \Upsilon_R$ such that
$V_1(R,r) = \min_{v \in \partial B_r} \g_\o(v, \zeta_R)$. Pick a sequence of integer $R_n\to \infty$ such that  $\g_\o(\cdot,\zeta_{R_n})$ converge to a potential $h$, so $\lim_n V_1(R_n,r) = \min_{v\in \partial B_r} h(v)$. Hence
$$ V_2(\infty,r) \leq \lim_n V_2(R_n,r) = \lim_n V_1(R_n,r) = \min_{v\in \partial B_r} h(v) \leq V_1(\infty, r) \, .$$

\end{proof}

\subsection{Two lemmas from \cite{NP2025} on the $h$-process}
The first lemma shows that if a sequence of 
dipoles $g_\o(\cdot,v_n)$ converges pointwise to a potential $h$ as $v_n \to \infty$, then the network random walk from $\o$, conditioned to reach $v_n$ before returning to $\o$, converges weakly to the $h$-process. We prove this more generally, for mixtures of dipoles.
\begin{lemma} \label{lem:PathsConverge} Let $(G,c,\o)$ be a recurrent rooted network. Let $R_n\to \infty$ be sequence of integers and for each $n$ let $\eta_{R_n}$ be a probability measure supported on $\partial B(\o,R_n)$. Assume that the sequence $\g_\o(\cdot,\eta_{R_n})$ converges pointwise to a potential $h$ as $n\to \infty$. Then for any finite path $\gamma$ in $S_h$ starting at a neighbor of $\o$, we have
\be\label{eq:pathsConverge} \sum_{v} \eta_{R_n}(v) \P_\o( \o \gamma \mid \tau_{v} < \tau_\o^+) \longrightarrow \P_{\mu_h}^h(\gamma) \quad \text{as}    \quad n \to \infty \,.\ee

\end{lemma}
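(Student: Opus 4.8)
The plan is to prove \eqref{eq:pathsConverge} by a direct computation relating the left-hand side to dipoles and then invoking the assumed convergence $\g_\o(\cdot,\eta_{R_n})\to h$. First I would fix a finite path $\gamma=(v_0,v_1,\dots,v_k)$ in $S_h$ with $v_0$ a neighbor of $\o$, and rewrite the conditioned probability appearing in the sum. For a single target vertex $v\in\partial B(\o,R_n)$, the elementary identity
$$ \P_\o(\o\gamma \mid \tau_v < \tau_\o^+) = \frac{\P_\o(\o\gamma,\ \tau_v<\tau_\o^+)}{\P_\o(\tau_v<\tau_\o^+)} $$
lets me split the numerator using the strong Markov property at the endpoint $v_k$ of $\gamma$: the event $\{\o\gamma\}\cap\{\tau_v<\tau_\o^+\}$ forces the walk to traverse $\o\gamma$ and then, from $v_k$, reach $v$ before hitting $\o$. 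Thus the numerator equals $c_\o p(\o,v_0)\big[\prod_{i=1}^k p(v_{i-1},v_i)\big]\cdot\P_{v_k}(\tau_v<\tau_\o)$, up to the normalization $c_\o p(\o,v_0) = c_{\o v_0}$ I would track carefully. Meanwhile the denominator $\P_\o(\tau_v<\tau_\o^+)$ is, by the standard escape-probability/dipole dictionary, expressible through $\g_\o$: indeed $c_\o\P_\o(\tau_v<\tau_\o^+)\,\g_\o(v,v) $ relates to $\g_\o$-quantities via reversibility and the last-exit decomposition. The key elementary facts are $\P_{v_k}(\tau_v<\tau_\o) = \g_\o(v_k,v)/\g_\o(v,v)$ (from \eqref{eq:2PtsA}, with roles suitably arranged) and $\P_\o(\tau_v<\tau_\o^+) = 1/(c_\o \g_\o(v,v))$ (the escape probability from $\o$ toward $v$ killed at $\o$). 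Substituting, the $\g_\o(v,v)$ factors cancel and the $v$-summand becomes $c_{\o v_0}\big[\prod_{i=1}^k p(v_{i-1},v_i)\big]\,c_\o\,\g_\o(v_k,v)$.

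Next I would sum against $\eta_{R_n}$. Using linearity, $\sum_v \eta_{R_n}(v)\,\g_\o(v_k,v) = \g_\o(v_k,\eta_{R_n})$, so the left-hand side of \eqref{eq:pathsConverge} equals
$$ c_\o\, c_{\o v_0}\Big[\textstyle\prod_{i=1}^k p(v_{i-1},v_i)\Big]\,\g_\o(v_k,\eta_{R_n}). $$
By hypothesis $\g_\o(v_k,\eta_{R_n})\to h(v_k)$ pointwise as $n\to\infty$ (note $v_k\in S_h$, so this limit is finite and consistent), hence the whole expression converges to
$$ c_\o\, c_{\o v_0}\Big[\textstyle\prod_{i=1}^k p(v_{i-1},v_i)\Big]\,h(v_k). $$
Finally I would recognize this limit as $\P^h_{\mu_h}(\gamma)$: by \eqref{eq:hprocessvsSRW}, $\P^h_{\mu_h}(\gamma) = c_\o \P_\o(\o\gamma) h(v_k) = c_\o\, c_{\o v_0}\big[\prod_{i=1}^k p(v_{i-1},v_i)\big] h(v_k)$ (using $\P_\o(\o\gamma) = p(\o,v_0)\prod_{i=1}^k p(v_{i-1},v_i)$ and $c_\o p(\o,v_0)=c_{\o v_0}$), which matches exactly. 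This completes the proof.

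The main obstacle I anticipate is bookkeeping the normalizing constants and making sure the decomposition of $\P_\o(\o\gamma\mid\tau_v<\tau_\o^+)$ is exactly right — in particular, correctly identifying $\P_\o(\tau_v<\tau_\o^+)$ with $1/(c_\o\g_\o(v,v))$ and $\P_{v_k}(\tau_v<\tau_\o)$ with $\g_\o(v_k,v)/\g_\o(v,v)$, since these rely on the interplay between the killed Green kernel, effective resistances, and reversibility, and an off-by-a-constant error would be easy to make. A secondary subtlety: I should verify that for large $n$ the target vertex $v\in\partial B(\o,R_n)$ lies in $S_h$'s ambient component so that the paths and Green-kernel identities are non-degenerate — but since $\g_\o(\cdot,\eta_{R_n})\to h$ with $h$ a genuine potential (hence $S_h\neq\emptyset$), and $\g_\o(v_k,v)>0$ requires $v_k$ and $v$ to be connected off $\o$, the terms with $\g_\o(v_k,v)=0$ simply contribute $0$ on both sides, so no harm is done. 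Once the constants are pinned down, the argument is a short computation plus one application of the hypothesis.
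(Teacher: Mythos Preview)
Your approach is exactly the paper's: decompose $\P_\o(\o\gamma\mid\tau_v<\tau_\o^+)$ via the Markov property at $\gamma_\ell$, use the two identities $\P_{v_k}(\tau_v<\tau_\o)=\g_\o(v_k,v)/\g_\o(v,v)$ and $\P_\o(\tau_v<\tau_\o^+)=1/(c_\o\g_\o(v,v))$ to obtain $c_\o\P_\o(\o\gamma)\g_\o(v_k,v)$, then integrate against $\eta_{R_n}$ and invoke the hypothesis and \eqref{eq:hprocessvsSRW}. One bookkeeping slip to fix: your numerator $\P_\o(\o\gamma,\,\tau_v<\tau_\o^+)$ should be $p(\o,v_0)\big[\prod_i p(v_{i-1},v_i)\big]\P_{v_k}(\tau_v<\tau_\o)$ with no $c_\o$ in front, so the summand is $c_{\o v_0}\big[\prod_i p(v_{i-1},v_i)\big]\g_\o(v_k,v)$ and the limit is $c_{\o v_0}\big[\prod_i p(v_{i-1},v_i)\big]h(v_k)=\P^h_{\mu_h}(\gamma)$; your extra $c_\o$ appears consistently on both sides and cancels, but the intermediate displayed expressions are off by that factor.
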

\begin{proof}
First note that for $v,w$ different from $\o$, we have
$\g_\o(w,v)=\P_w(\tau_v <\tau_\o)\g_\o(v,v)$ and  by path reversal,
$$ c_\o\P_\o(\tau_v <\tau_\o^+)= 
c_v \P_v(\tau_\o <\tau_v^+)= c_v \G_\o(v,v)^{-1} =\g_\o(v,v)^{-1}\,.$$
 Therefore, if $\gamma=(\gamma_1,\dots,\gamma_\ell)$   does not contain the vertex $v$, then  
 \be \label{dipoledoob}
\P_\o( \o \gamma \mid \tau_{v} < \tau_\o^+)=\P_\o( \o \gamma) \frac{ \P_{\gamma_\ell}(   \tau_{v} < \tau_\o) }{\P_{\o}(   \tau_{v} < \tau_\o^+)}=
c_\o \P_\o( \o \gamma)  \g_\o(\gamma_\ell,v) \,.
 \ee
Integrating this with respect $\eta_{R_n}$ gives
$$ \sum_v \eta_{R_n}(v)\P_\o( \o \gamma \mid \tau_{v} < \tau_\o^+) = c_\o \P_\o( \o \gamma)  \g_\o(\gamma_\ell,\eta_{R_n}) \to 
c_\o \P_\o( \o \gamma)  h(\gamma_\ell) \, ,$$
as $n \to \infty$ by our hypothesis. This yields the desired result by  \eqref{eq:hprocessvsSRW}. 
\end{proof}

Our second lemma is a simple combination of optional stopping and path reversal.

\begin{lemma}\label{lem:dipolePotential}
Let $h$ be a potential for a  recurrent rooted network $(G,c,\o)$ and let $v\neq \o$ be a vertex. Then the network random walk $\{X_t\}$ satisfies 
$$ \P_\o( h(X_j) \geq M \mid \tau_v < \tau_\o^+) \leq {h(v) \over M} \, ,$$
for all $M>0$ and integer $0<j< d(\o,v)$.
\end{lemma}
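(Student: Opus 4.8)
The plan is to prove this by \emph{path reversal followed by a maximal inequality}, rather than by estimating $\E_\o[h(X_j)\mid \tau_v<\tau_\o^+]$ directly. The point of the reversal is that conditioning on $\{\tau_v<\tau_\o^+\}$ is ``expensive'' to keep track of, whereas after reversing the walk the conditioning disappears and the bound becomes a statement about the running maximum of the \emph{unconditioned} walk started at $v$, killed at $\o$, which is exactly the setting of Doob's $L^1$ inequality.

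First I would establish the reversal identity: for $0<j<d(\o,v)$,
$$\P_\o\big(h(X_j)\ge M \ \big|\ \tau_v<\tau_\o^+\big)=\P_v\big(h(X_{\tau_\o-j})\ge M\big),$$
where on the right $\{X_t\}$ is the network random walk started at $v$ and $\tau_\o=\min\{n\ge0:X_n=\o\}$ (note $\tau_\o\ge d(v,\o)=d(\o,v)>j$, so $X_{\tau_\o-j}$ is well defined). To see this, fix $w\notin\{\o,v\}$. Since $j<d(\o,v)$, on $\{X_j=w,\ \tau_v<\tau_\o^+\}$ the walk avoids $\o$ and $v$ up to time $j$, so the strong Markov property at time $j$ gives $\P_\o(X_j=w,\ \tau_v<\tau_\o^+)=\P_\o(X_j=w,\ \tau_\o^+>j)\,\P_w(\tau_v<\tau_\o)$. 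Using reversibility in the form $c_\o\,\P_\o(X_j=w,\ \tau_\o^+>j)=c_w\,\P_w(\tau_\o=j)$, the identity $\P_w(\tau_v<\tau_\o)=\g_\o(w,v)/\g_\o(v,v)$ from \eqref{eq:2PtsA}, the symmetry relation $c_w\,\g_\o(w,v)=\G_\o(v,w)$, and $\P_\o(\tau_v<\tau_\o^+)=1/\big(c_\o\g_\o(v,v)\big)$ (because $\Reff(\o\lr v)=\g_\o(v,v)$), one gets after dividing that $\P_\o(X_j=w\mid\tau_v<\tau_\o^+)=\G_\o(v,w)\,\P_w(\tau_\o=j)$. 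Finally, expanding $\G_\o(v,w)=\sum_{n\ge0}\P_v(X_n=w,\tau_\o>n)$ and applying the Markov property of the walk from $v$ at each such $n$ (the constraint $\tau_\o=n+j$ forces $n=\tau_\o-j$) yields $\G_\o(v,w)\P_w(\tau_\o=j)=\P_v(X_{\tau_\o-j}=w)$; for $w\in\{\o,v\}$ both sides vanish since $j<d(\o,v)$. Summing over $w$ with $h(w)\ge M$ gives the displayed identity.

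The second, routine, step is optional stopping. For the walk started at $v$, the process $t\mapsto h(X_{t\wedge\tau_\o})$ is a nonnegative martingale: $h$ is harmonic off $\o$ and the walk does not visit $\o$ before $\tau_\o$, while after $\tau_\o$ it is constant equal to $h(\o)=0$; in particular $\E_v[h(X_{t\wedge\tau_\o})]=h(v)$. Doob's $L^1$ maximal inequality then gives $M\,\P_v\big(\max_{0\le t\le N}h(X_{t\wedge\tau_\o})\ge M\big)\le h(v)$, and letting $N\to\infty$ (using recurrence, so $\tau_\o<\infty$ a.s., and $h(X_{\tau_\o})=0<M$) yields $\P_v\big(\max_{0\le t<\tau_\o}h(X_t)\ge M\big)\le h(v)/M$. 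Since $1\le j<\tau_\o$, the index $\tau_\o-j$ lies in $\{1,\dots,\tau_\o-1\}$, so $\{h(X_{\tau_\o-j})\ge M\}\subseteq\{\max_{0\le t<\tau_\o}h(X_t)\ge M\}$, and combining with the reversal identity finishes the proof. The only delicate point is the first step: one must check that viewing the $\o$-walk conditioned on $\{\tau_v<\tau_\o^+\}$ at time $j$ is literally the same as viewing the unconditioned $v$-walk $j$ steps before its first visit to $\o$; the hypothesis $j<d(\o,v)$ is used crucially there, both to make $X_{\tau_\o-j}$ meaningful and to guarantee that the relevant path segments stay away from $\{\o,v\}$ so that the Markov and reversibility manipulations are legitimate. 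After that, everything reduces to the standard maximal inequality for the nonnegative martingale $h(X_{\cdot\wedge\tau_\o})$, whose terminal expectation is $h(v)$.
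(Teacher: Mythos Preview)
Your proof is correct, and the overall strategy---reverse the $\o\to v$ excursion into a walk from $v$ to $\o$, then apply optional stopping to the nonnegative martingale $h(X_{t\wedge\tau_\o})$---is the same as the paper's. The difference lies in how the reversal is carried out. The paper first bounds $\P_\o(h(X_j)\ge M\mid\tau_v<\tau_\o^+)$ by $\P_\o(T_M<\tau_v\mid\tau_v<\tau_\o^+)$, reverses this to $\P_v(T_M<\tau_\o\mid\tau_\o<\tau_v^+)$, and then needs an extra last-exit argument to drop the conditioning on $\{\tau_\o<\tau_v^+\}$ before invoking optional stopping. You instead establish the exact distributional identity $\P_\o(X_j=w\mid\tau_v<\tau_\o^+)=\P_v(X_{\tau_\o-j}=w)$, which already lands on the \emph{unconditioned} walk from $v$ and makes the last-exit step unnecessary. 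The price is a slightly longer verification of the identity; the payoff is a cleaner logical structure and an identity that is of independent interest.
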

\begin{proof} We may assume that $h(v)< M$. We have
\be\label{eq:beforeReversing}  \P_\o(h(X_j) \geq M \mid \tau_{v} < \tau_\o^+) \leq  \P_\o(T_{M} < \tau_{v} \mid \tau_{v} < \tau_\o^+) \, ,\ee
where $T_M = \inf \{t > 0: h(X_t) \geq M\}$. Recall that $c_\o\P_\o(\tau_{v} < \tau_\o^+) = {c_{v}}\P_{v}(\tau_\o < \tau_{v}^+)$ by path reversal. Similarly, 
$$c_\o \P_\o(T_{M} < \tau_{v} < \tau_\o^+) = c_v \P_{v}(T_{M} < \tau_{\o} < \tau_{v}^+) \, ,$$
so
$$\P_\o(T_{M} < \tau_{v} \mid \tau_{v} < \tau_\o^+) = \P_{v}(T_{M} < \tau_{\o} \mid \tau_{\o} < \tau_{v}^+) \, .$$
Consider the random walk started at $v$ and write $L = \max\{t < \tau_\o : X_t = v\}$. Then $X_{L},\ldots, X_{\tau_\o}$ is distributed as a random walk started at $v$, conditioned on $\tau_\o < \tau_{v}^+$, and stopped at $\o$. Hence
\be\label{eq:DPstep2} \P_{v}(T_M < \tau_{\o} \mid \tau_{\o} < \tau_{v}^+) = \P_{v}(\exists s\in (L,\tau_\o) : h(X_s)\geq M ) \leq \P_{v}(T_M < \tau_o) \, .\ee
To bound the last probability we note that $S_t=h(X_{t \wedge \tau_o})$ is a non-negative martingale, so by optional stopping
$$ h(v) =  \E_v S_0 \geq \E_v S_{T_M \wedge \tau_\o} \geq M \P_{v}(T_M < \tau_\o) \, .$$ 
Combining this with \eqref{eq:beforeReversing} and \eqref{eq:DPstep2} concludes the proof. \end{proof}

\subsection{Proof of \cref{conj:main}}
We apply \cref{lem:reduction1} and \cref{lem:minimax}.
 Given $M>0$, we must show that there is an integer $r>0$,
 such that 
\be \label{minigoal}
 V_2(\infty,r)=V_1(\infty,r) \ge M \, .
\ee
 For each integer $r>0$, we can choose $\eta_r \in \Upsilon_r$  such that every potential $\psi$ satisfies 
\be\label{eq:minimax} \psi \cdot \eta_r \le V_2(\infty,r) \, .\ee
By \cref{claim:GreenIsDipole} there is a sequence of integers $r_n\to \infty$ such that $\g_\o(\cdot,\eta_{r_n})$ converges pointwise to a potential $h$. By \cref{lem:hProcessInfty}, there exists an integer $\ell$ such that 
$$ \P_{\mu_h}^h(h(X_\ell) \geq 2M) \geq 2/3 \, .$$
Next we apply \cref{lem:PathsConverge} and sum \eqref{eq:pathsConverge} over all paths $\gamma=(\gamma_1,\ldots,\gamma_\ell)\subset S_h$ such that $h(\gamma_\ell)\geq 2M$. We obtain
$$ \lim_{n\to \infty} \sum_v \eta_{r_n}(v) \P_\o(h(X_\ell) \geq 2M \mid \tau_v 
< \tau_\o^+)\geq  2/3 \, .$$ \cref{lem:dipolePotential} implies that 
$$ \P_\o(h(X_\ell) \geq 2M \mid \tau_v 
< \tau_\o^+) \leq {h(v) \over 4M} \, ,$$
so there exists $n$ such that
$$   \eta_{r_n} \cdot \frac{h}{2M}  \geq   {1\over 2} \, .$$
Using \eqref{eq:minimax}  we obtain \eqref{minigoal} with $r=r_n$,  as needed.
\qed

\section{Convergence of dipoles along the $h$-process} \label{sec:convergence}

Given a sequence of vertices $y_n \to \infty$ in a recurrent rooted network $(G,c, \o)$, every subsequential pointwise limit of the dipoles $\g_\o(\cdot, y_n)$ is a potential. However, not every potential is a limit of dipoles; consider the potential $x \mapsto |x|/2$ on $(\mathbb Z, {\bf 1}, \o)$. It is well known that every potential is a limit of convex combinations of dipoles; see, e.g., \cref{cor:PotentialsMixtureDipoles}. 
Our next theorem provides a significant strengthening of this fact.

\begin{thm}\label{thm:convergence} Let $h$ be a potential on a recurrent rooted network $(G,c, \o)$, and let $\{\Y_n\}_{n \geq 0}$ be the $h$-process above with initial distribution 
\be\label{def:mu}\mu_h(v) = c_{\o v} h(v) \quad \text{for all} \; \; v \sim \o  \, .\ee Then almost surely, the dipoles $\g_\o(\cdot, \Y_n)$ converge pointwise to a random potential $H$,
$$ \g_\o(\cdot, \Y_n) \longrightarrow H(\cdot) \, ,$$
and $\E H(x) = h(x)$ for every $x\in V$. If $h$ is an extremal\footnote{$h$ is extremal if it cannot be represented as a non-trivial convex combination of potentials.} potential, then  $H=h$ almost surely.
\end{thm}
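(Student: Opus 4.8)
\textbf{Proof proposal for \cref{thm:convergence}.}

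The plan is to exhibit $\g_\o(\cdot,\Y_n)$ as a martingale (in an appropriate sense) along the $h$-process, so that pointwise convergence follows from the martingale convergence theorem, and then identify the limit. First I would fix a vertex $x$ and study the process $n \mapsto \g_\o(x,\Y_n)$. The key identity is the one underlying \cref{cor:PotentialsMixtureDipoles}: for the $h$-process started from $\mu_h$, the function $v \mapsto \g_\o(x,v)$ should be ``harmonic'' for the time-reversed dynamics, or equivalently, one checks that $\E^h_{\mu_h}[\g_\o(x,\Y_{n+1}) \mid \Y_0,\dots,\Y_n] = \g_\o(x,\Y_n)$ whenever $\Y_n \ne x$, using $p^h(v,w) = p(v,w)h(w)/h(v)$ together with the fact that $\sum_w p(v,w)\g_\o(x,w) = \g_\o(x,v)$ for $v \notin \{\o,x\}$ (harmonicity of the dipole off $\{\o,x\}$) and $h$ harmonic off $\o$. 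The only places this fails are $v=x$ (where the dipole has Laplacian $-1$) and $v=\o$ (excluded since $\Y_n \in S_h$); but since the $h$-process is transient (\cref{claim:GhMu}) and visits $x$ only finitely often, $\g_\o(x,\Y_n)$ is eventually a bounded nonnegative martingale — bounded by $\sup_y \g_\o(x,y)<\infty$ from \cref{claim:GreenIsDipole} — hence converges a.s. Taking a countable intersection over all $x \in V$ gives a.s. pointwise convergence to a limit $H(\cdot)$, and $H$ is a potential by the same reasoning as in \cref{claim:compact} (each $H(x) \le \prod c_{x_{i-1}x_i}^{-1}$, and harmonicity off $\o$ plus $\Delta H(\o)=1$ pass to the limit since these are finite local conditions).

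Next I would compute $\E H(x)$. By Fatou, or rather by the optional-stopping/uniform-integrability provided by the bound $\g_\o(x,\Y_n) \le \sup_y \g_\o(x,y)$, dominated convergence gives $\E_{\mu_h} H(x) = \lim_n \E_{\mu_h} \g_\o(x,\Y_n)$. To evaluate $\E_{\mu_h}\g_\o(x,\Y_n)$ I would use \eqref{eq:hprocessvsSRW}: summing over paths $\gamma$ of length $n$ from neighbors of $\o$, $\E^h_{\mu_h}[\g_\o(x,\Y_n)] = \sum_{v} c_\o \P_\o(\o \to v \text{ in } n+1 \text{ steps}, \tau_\o > n)\, h(v)\, \g_\o(x,v)/h(v) \cdot (\text{corrections})$ — more cleanly, the martingale property already forces $\E_{\mu_h}\g_\o(x,\Y_n)$ to be constant in $n$ once $n$ is large enough that $x$ is no longer visited, but to pin down the constant I evaluate at $n=0$: $\E_{\mu_h}\g_\o(x,\Y_0) = \sum_{v\sim\o} c_{\o v}h(v)\g_\o(x,v) = \sum_{v\sim\o}c_{\o v}\g_\o(x,v)h(v)$. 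Hmm — this is not quite $h(x)$ directly, so instead I would argue via \cref{claim:GhMu}: $\G^h(\mu_h,v) = h(v)c_v$, hence $\E_{\mu_h}\big[\text{number of visits of }\Y\text{ to }v\big] = h(v)c_v$, and one relates $\E H(x)$ to these occupation measures. Actually the cleanest route: $\g_\o(x,\Y_n)$ is a martingale for $n$ past the last visit to $x$, and by \eqref{eq:Gh} and the structure in \cref{cor:PotentialsMixtureDipoles}, $\E_{\mu_h}[\g_\o(x,\Y_{L_D})] = h(x)$ for every finite $D \ni \o, x$; letting $D$ exhaust $V$ so that $L_D \to \infty$ and $\Y_{L_D} \to \infty$ along the transient path, and using the uniform bound for dominated convergence, yields $\E_{\mu_h} H(x) = \lim_D \E_{\mu_h}[\g_\o(x,\Y_{L_D})] = h(x)$.

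For the final assertion, suppose $h$ is extremal. I would condition on the ``tail'' of the $h$-process. The distribution of $H$ is a probability measure on the compact convex set of potentials with barycenter $h$; extremality of $h$ does not immediately force $H=h$ unless one knows $H$ is a.s. an extremal potential, so instead I would argue directly: the limit $H$ is tail-measurable for the $h$-process (it depends only on $\{\Y_n\}_{n\ge N}$ for every $N$, since each $\g_\o(x,\Y_n)$ is eventually a martingale and the limit is unchanged by altering finitely many coordinates). By the decomposition of the $h$-process via its exit behaviour — more precisely, writing $h = \int \g_\o(\cdot,\cdot)\,d(\text{something})$ is not available, but the $h$-process started from $\mu_h$ can be disintegrated according to $H$ — one sees that conditionally on $H = h'$, the process is an $h'$-process-like object, forcing $h = \E[H] = \int h' \,d\P(H=h')$; extremality then forces the mixing measure to be a point mass at $h$. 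The main obstacle I anticipate is exactly this last step: making rigorous that $H$ is a tail random variable and that conditioning the $h$-process on $\{H=h'\}$ produces a process whose dipole-limit is deterministically $h'$, so that $h$ is genuinely represented as the barycenter $\int h'\,d\mu(h')$ with $\mu = \mathrm{law}(H)$, after which extremality closes the argument. The martingale convergence and the $\E H = h$ computation are routine given the identities already established; the disintegration/tail-triviality argument for the extremal case is where the real work lies.
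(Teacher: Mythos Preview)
Your core step is wrong: $n\mapsto \g_\o(x,\Y_n)$ is \emph{not} a forward martingale for the $h$-process, even away from $x$. The identity $\sum_w p(v,w)\g_\o(x,w)=\g_\o(x,v)$ is $p$-harmonicity of the dipole for the \emph{network} walk, but the $h$-process has transitions $p^h(v,w)=p(v,w)h(w)/h(v)$, so
\[
\E^h\bigl[\g_\o(x,\Y_{n+1})\,\big|\,\Y_n=v\bigr]=\frac{1}{h(v)}\sum_{w}p(v,w)\,h(w)\,\g_\o(x,w),
\]
which equals $\g_\o(x,v)$ only if the \emph{product} $h(\cdot)\g_\o(x,\cdot)$ is $p$-harmonic at $v$; products of harmonic functions are not harmonic. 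Concretely, on $\Z$ with $h(n)=\max(n,0)$ and $\g_\o(x,y)=\min(x,y)$ for $x,y>0$, one computes $\E^h[\g_\o(x,\Y_{n+1})\mid \Y_n=v]=v+1/v>v$ for $1<v<x$. (What \emph{is} $p^h$-harmonic off $x$ is $\g_\o(x,\cdot)/h(\cdot)$, but since $h(\Y_n)\to\infty$ this only tells you the ratio tends to $0$.) The paper supplies the missing idea: it identifies $\g_\o(x,y)/h(x)$ as the Martin kernel $M^h(x,y)$ of the transient $h$-process (via \eqref{eq:XvsY} and \eqref{eq:Gh}) and proves the Doob--Hunt theorem (\cref{thm:dynkin}) that $M(x,\Y_n)$ converges along any transient chain. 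The mechanism is not a forward martingale but a \emph{backward} supermartingale: for finite $D$ with last exit time $L=L_D$, the reversed process $(\Y_L,\Y_{L-1},\ldots)$ is Markov, and $S_n:=M^h(x,\Y_{L-n})$ is a supermartingale in that filtration; Doob's upcrossing lemma then bounds the forward downcrossings of $\g_\o(x,\Y_j)$, and exhausting $D\uparrow V$ gives a.s.\ convergence. Your route to $\E H(x)=h(x)$ via $\E_{\mu_h}\g_\o(x,\Y_{L_D})=h(x)$ and dominated convergence is fine and matches the paper.

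For the extremal case you are working much too hard. No tail $\sigma$-field or disintegration is needed: if $A=\{H(x_0)>h(x_0)\}$ has positive probability for some $x_0$, then $h_A(\cdot):=\E[H(\cdot)\mid A]$ and $h_{A^c}(\cdot)$ are both potentials (the Laplacian is a finite linear combination of point evaluations, so it commutes with conditional expectation), and $h=\P(A)\,h_A+\P(A^c)\,h_{A^c}$ with $h_A(x_0)>h(x_0)$ exhibits $h$ as a nontrivial convex combination.
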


The following corollary shows the power of the almost sure convergence in the preceding theorem; convergence along a subsequence would not suffice.

\begin{cor}\label{cor:FiniteIntersection} Let $h_1$ and $ h_2$ be two distinct extremal potentials on a recurrent network $(G,c,\o)$ and let $\{Y_n\}_{n \geq 0}$ and $\{Z_n\}_{n \geq 0}$ be the $h_1$-process and $h_2$-process, respectively (no independence is assumed). Then their trajectories intersect finitely often a.s., that is, the set $\{ (m,n) : Y_m = Z_n \}$ is almost surely finite.
\end{cor}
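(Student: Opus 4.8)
The plan is to deduce \cref{cor:FiniteIntersection} from \cref{thm:convergence} by exploiting that the limiting dipole-potential $H$ identifies which extremal potential governs the process. First I would apply \cref{thm:convergence} separately to each process: since $h_1$ is extremal, the dipoles $\g_\o(\cdot, Y_m)$ converge pointwise almost surely to $h_1$; similarly $\g_\o(\cdot, Z_n) \to h_2$ almost surely. Fix a vertex $x$ (say a neighbor of $\o$, or any vertex where $h_1(x) \neq h_2(x)$, which exists since $h_1 \neq h_2$). Because $h_1(x) \neq h_2(x)$, we can pick $\e > 0$ with $|h_1(x) - h_2(x)| > 2\e$, and then almost surely there is a (random) finite index $N$ such that for all $m \geq N$ we have $|\g_\o(x, Y_m) - h_1(x)| < \e$ and for all $n \geq N$ we have $|\g_\o(x, Z_n) - h_2(x)| < \e$.

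The key observation is then the following: if $Y_m = Z_n$ for some pair $(m,n)$ with both $m, n \geq N$, then $\g_\o(x, Y_m) = \g_\o(x, Z_n)$ as a single number, yet the first lies within $\e$ of $h_1(x)$ and the second within $\e$ of $h_2(x)$, contradicting $|h_1(x) - h_2(x)| > 2\e$. Hence no intersection point $(m,n)$ can have both coordinates $\geq N$. It remains to argue that the set of intersection pairs with $\min(m,n) < N$ is also finite almost surely. For this I would use transience of both the $h_1$-process and the $h_2$-process (\cref{claim:GhMu}): for each fixed $m < N$, the vertex $Y_m$ is visited by $\{Z_n\}$ only finitely often a.s. (indeed, the expected number of visits is $\G^{h_2}(\mu_{h_2}, Y_m) < \infty$), so $\{n : Z_n = Y_m\}$ is finite; symmetrically $\{m : Y_m = Z_n\}$ is finite for each fixed $n < N$. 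Since $N$ is finite a.s., the union over $m < N$ and over $n < N$ of these finite sets is finite, and combined with the previous paragraph we conclude $\{(m,n) : Y_m = Z_n\}$ is finite a.s.

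The main obstacle I anticipate is a subtle one concerning the order of quantifiers and the role of almost sure convergence, which is exactly what the corollary's remark is flagging: the convergence $\g_\o(x, Y_m) \to h_1(x)$ must hold for the \emph{actual} trajectory $\{Y_m\}$ simultaneously with $\g_\o(x, Z_n) \to h_2(x)$ for $\{Z_n\}$, on a common probability-one event — this is fine since a countable intersection of probability-one events has probability one, and we only need the statement at the single vertex $x$ (or at countably many vertices). No independence between the two processes is needed precisely because we argue on this intersection event pathwise. A minor point to handle carefully: one should make sure $x$ can be chosen in $S_{h_1} \cap S_{h_2}$ or else interpret $\g_\o(x, \cdot)$ and the potentials consistently when $x$ lies outside one of the supports; choosing $x$ to be a common neighbor of $\o$ (which lies in both supports by \cref{rmk:Sh}, since $\mu_{h_i}$ is supported on neighbors of $\o$ in $S_{h_i}$) and noting $h_1 \neq h_2$ forces a disagreement somewhere, after which one can also invoke \cref{lem:lipschitz1} or harmonicity to propagate a disagreement to a neighbor of $\o$ if necessary.
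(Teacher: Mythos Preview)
Your proof is correct and follows essentially the same route as the paper: pick $x$ with $h_1(x)\neq h_2(x)$, apply \cref{thm:convergence} to each process to get $\g_\o(x,Y_m)\to h_1(x)$ and $\g_\o(x,Z_n)\to h_2(x)$ almost surely, and conclude using transience. The paper compresses your two final paragraphs into a single sentence, but the content is the same. Your worry about choosing $x$ inside $S_{h_1}\cap S_{h_2}$ is unnecessary: the convergence in \cref{thm:convergence} is stated and proved for all $x\in V$, so any $x$ with $h_1(x)\neq h_2(x)$ works directly.
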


\begin{proof} Let $x\in V$ be such that $h_1(x) \neq h_2(x)$.  By \cref{thm:convergence}, we almost surely have
$$ \g_\o(x, Y_n) \to h_1(x) \quad \text{and} \quad \g_\o(x, Z_n) \to h_2(x) \, .$$
Since  $\{Y_n\}$ and $\{Z_n\}$ are transient, the desired statement follows.
\end{proof}

The key to the proof of \cref{thm:convergence} is the following theorem on transient chains which is due to Doob \cite{Doob59} and Hunt \cite{Hunt60}. For completeness we provide its proof below, following the exposition by Dynkin~\cite{Dynkin69}. Another exposition can be found in Kemeny and Snell \cite{KemenySnell}. 
Recall the definition of the Martin kernel in \eqref{eq:MartinDef} from \cref{sec:transient}.

\begin{thm} \label{thm:dynkin} Let $\Y_n$ be a transient Markov chain governed by transition probabilities $\q$ and initial distribution $\mu$ satisfying \eqref{def:standardMeasure}. Then for every $x\in S$, the sequence $M(x, Y_n)$ converges almost surely as $n \to \infty$. Furthermore, $\E M(x,Y_n) \to 1$.
\end{thm}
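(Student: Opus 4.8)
\textbf{Proof proposal for \cref{thm:dynkin}.}

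The plan is to establish almost sure convergence of $M(x,Y_n)$ by exhibiting it as (a ratio controlled by) a nonnegative martingale, then to handle the convergence of expectations separately using the last-exit decomposition from \cref{lem:ExpectationEq1}. First I would fix $x\in S$ and argue that the process $n\mapsto M(x,Y_n)$ is a supermartingale for the $\mu$-chain. Indeed, using the Green-kernel identity $\G(z,y)=\sum_w \q(z,w)\G(w,y)+\mathbf 1_{z=y}$, one gets $\sum_w \q(z,w)M(x,w) = M(x,z) - \mathbf 1_{z=x}/\G(\mu,x) \le M(x,z)$, so $\E_\mu[M(x,Y_{n+1})\mid Y_0,\dots,Y_n] \le M(x,Y_n)$. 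Since $M(x,\cdot)\ge 0$, the supermartingale convergence theorem gives that $M(x,Y_n)$ converges almost surely to a finite limit. (One should note the bound \eqref{eq:Minequality}, $M(x,Y_n)\le \G(x,x)/\G(\mu,x)<\infty$, which also shows the sequence is bounded, giving an alternative route via bounded martingale/supermartingale convergence and helping later with uniform integrability.)

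Next I would prove $\E_\mu M(x,Y_n)\to 1$. The natural tool is \cref{lem:ExpectationEq1}: for any finite $D\ni x$ with $\mu(D)=1$ we have $\E_\mu[M(x,Y_{L_D})]=1$, where $L_D$ is the last visit to $D$. The idea is to take an exhaustion $D_1\subset D_2\subset\cdots$ of $S$ by finite sets, each containing $x$ and $\mathrm{supp}(\mu)$, so that $L_{D_k}\to\infty$ almost surely by transience (for each fixed $k$, $Y_{L_{D_k}}$ is well-defined and finite a.s.). Then $M(x,Y_{L_{D_k}})\to \lim_n M(x,Y_n)$ almost surely as $k\to\infty$, since $L_{D_k}\to\infty$. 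Because $\E_\mu M(x,Y_{L_{D_k}})=1$ for every $k$ and the sequence is uniformly bounded by $\G(x,x)/\G(\mu,x)$ (hence uniformly integrable), we may pass to the limit: $\E_\mu[\lim_n M(x,Y_n)] = 1$. This handles the limit random variable; to get $\E_\mu M(x,Y_n)\to 1$ for the original sequence indexed by $n$, I would use that $\{M(x,Y_n)\}_{n\ge 0}$ is a uniformly bounded, a.s.\ convergent sequence, so by dominated convergence $\E_\mu M(x,Y_n)\to \E_\mu[\lim_n M(x,Y_n)]=1$.

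The main obstacle I anticipate is the interchange of limits in the second part: one must be careful that $L_{D_k}$, although a.s.\ finite for each $k$, is \emph{not} a stopping time, so optional stopping cannot be invoked naively — this is precisely why \cref{lem:ExpectationEq1} is stated and proved directly via the last-exit identity \eqref{eq:exit2} rather than by martingale arguments. The clean way around this is to rely only on (i) the a.s.\ convergence from the supermartingale theorem, (ii) the uniform bound \eqref{eq:Minequality}, and (iii) the exact identity $\E_\mu M(x,Y_{L_{D_k}})=1$ from \cref{lem:ExpectationEq1}, combining them through dominated convergence; no further stopping-time manipulation is needed. A secondary point to check is that $\G(\mu,x)>0$ so $M(x,\cdot)$ is well-defined, which is exactly hypothesis \eqref{def:standardMeasure}, and that $\mathrm{supp}(\mu)$ is finite so it can be absorbed into the $D_k$.
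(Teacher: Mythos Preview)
Your argument has a genuine gap in the first step: the forward process $n\mapsto M(x,Y_n)$ is \emph{not} a supermartingale.  The Green identity you quote, $\G(z,y)=\mathbf 1_{z=y}+\sum_w q(z,w)\G(w,y)$, says that $\G(\cdot,y)$ is superharmonic in its \emph{first} variable; applied with $y=x$ it yields $\sum_w q(z,w)\G(w,x)=\G(z,x)-\mathbf 1_{z=x}$, i.e.\ that $\G(Y_n,x)$ (hence $\G(Y_n,x)/\G(\mu,x)$) is a supermartingale.  But $M(x,Y_n)=\G(x,Y_n)/\G(\mu,Y_n)$ has $Y_n$ in the \emph{second} slot, and the varying denominator $\G(\mu,Y_n)$ destroys any such inequality.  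A one--line counterexample: on $S=\{1,2,\dots\}$ with $q(n,n+1)=1$ and $\mu=\delta_1$, one has $\G(x,y)=\mathbf 1_{y\ge x}$ and $\G(\mu,y)\equiv 1$, so $M(2,Y_0)=M(2,1)=0$ while $M(2,Y_1)=M(2,2)=1$.  Thus your displayed identity $\sum_w q(z,w)M(x,w)=M(x,z)-\mathbf 1_{z=x}/\G(\mu,x)$ is simply false, and the supermartingale convergence theorem cannot be invoked in forward time.

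The paper supplies the missing idea: \emph{time reversal}.  By \cref{lem:independent}, for a finite $D$ with $\mu(D)=1$ the reversed process $\{Y_{L-n}\}_{n\ge 0}$ is Markov with transition kernel $\widehat q(u,v)=\G(\mu,v)\,q(v,u)/\G(\mu,u)$ on $S$.  A direct computation with this kernel shows $S_n:=M(x,Y_{L-n})$ is a nonnegative supermartingale for the reversed filtration, so Doob's upcrossing lemma bounds the upcrossings of $\{S_n\}$, which are precisely the \emph{downcrossings} of the forward sequence $\{M(x,Y_j)\}_{j\le L}$.  Exhausting $S$ by finite sets $D$ and combining with the uniform bound \eqref{eq:Minequality} gives the almost sure convergence.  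Your treatment of the second assertion (using \cref{lem:ExpectationEq1} along an exhaustion together with the bound \eqref{eq:Minequality} and dominated convergence) is correct and matches the paper's.
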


\begin{remark} \cref{thm:dynkin} does not assume reversibility. 
\end{remark}

For the proof of \cref{thm:dynkin} it will be convenient to add a state $*$ to our state space and set  $M(x,*):=0$ for any $x\in S$. Furthermore, let $Y_{-k}:=*$ for $k>0$. Lastly, for a finite set $D\subset S$ with $\mu(D)=1$, we write $L=L_D$ for the last exit time from $D$ (recall that $L=-\infty$ if the chain never visits $D$).

\begin{lemma} \label{lem:independent} Let $D \subset S$ be a finite set with $\mu(D)=1$. Then the process $\{Y_{L-n}\}_{n\geq 0}$ has the Markov property with respect to the filtration 
\be \label{filter} 
\F_n = \sigma(Y_L, \ldots, Y_{L-n}) \, ,
\ee
that is,  
$$ \P_{\mu}(Y_{L-(n+1)}= z_{n+1} \mid \F_n) = \P_{\mu}(Y_{L-(n+1)}= z_{n+1} \mid Y_{L-n}) \, ,$$
for all $n\geq 0$ and $z_{n+1} \in S \cup \{*\}$.
\end{lemma}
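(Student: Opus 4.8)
The plan is to prove the time-reversed process $\{Y_{L-n}\}_{n \geq 0}$ is Markov by a direct computation of finite-dimensional distributions, exploiting the fact that conditioning on $L = L_D$ being achieved (i.e., on the forward chain's last visit to $D$) freezes the ``future after $L$'' into a factor that depends only on the last state visited in $D$. First I would set up notation: for a path $z_0, z_1, \ldots, z_n$ in $S \cup \{*\}$, I want to compute $\P_\mu(Y_L = z_0, Y_{L-1} = z_1, \ldots, Y_{L-n} = z_n)$. The clean way is to sum over the possible values $k = L$ of the last exit time and over the whole forward trajectory up to time $k$. Using the Markov property of the \emph{forward} chain, for $z_0 \in D$ and $k \geq 0$,
$$
\P_\mu(Y_{L-n} = z_n, \ldots, Y_{L-1} = z_1, Y_L = z_0,\ L = k)
= \P_\mu(Y_{k-n} = z_n, \ldots, Y_k = z_0)\,\P_{z_0}(L_D = 0),
$$
where the last factor is the probability the chain started at $z_0$ never returns to $D$; here one needs $n \leq k$, and the $*$-padding convention handles the case $k < n$ (those terms vanish unless the corresponding $z_i = *$). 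Summing over $k \geq n$ and using $\sum_{k \geq n} \P_\mu(Y_{k-n} = z_n, \ldots, Y_k = z_0) = \G(\mu, z_n)\,\q(z_n, z_{n-1}) \cdots \q(z_1, z_0)$ (this is exactly the kind of Green-kernel summation used in the proof of \cref{lem:ExpectationEq1}), I get
$$
\P_\mu(Y_L = z_0, \ldots, Y_{L-n} = z_n) = \G(\mu, z_n)\,\prod_{i=1}^n \q(z_i, z_{i-1})\,\cdot\,\P_{z_0}(L_D = 0).
$$

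From this explicit product formula the Markov property is immediate: the ratio
$$
\frac{\P_\mu(Y_L = z_0, \ldots, Y_{L-n} = z_n, Y_{L-(n+1)} = z_{n+1})}{\P_\mu(Y_L = z_0, \ldots, Y_{L-n} = z_n)}
= \frac{\G(\mu, z_{n+1})\,\q(z_{n+1}, z_n)}{\G(\mu, z_n)}
$$
depends on $(z_0, \ldots, z_{n+1})$ only through $z_n$ and $z_{n+1}$ (note $\G(\mu, z_n) > 0$ by \eqref{def:standardMeasure}, so the conditioning event has positive probability whenever it is attainable). Hence $\P_\mu(Y_{L-(n+1)} = z_{n+1} \mid \F_n) = \P_\mu(Y_{L-(n+1)} = z_{n+1} \mid Y_{L-n})$, which is the claim. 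I would also note, for the edge case, that once $Y_{L-n} = *$ one has $Y_{L-(n+1)} = *$ deterministically, consistent with the formula since $\q(\cdot, *)$ and $\G(\mu, *)$ should be read via the convention that padding beyond the start of the chain stays at $*$; it is cleanest to just verify the identity separately on the event $\{L \geq n\}$ and observe that on $\{L < n\}$ the process $\{Y_{L-n}\}$ has already absorbed at $*$.

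The main obstacle — really the only subtle point — is handling the interaction between the random time $L$ and the indexing: one must be careful that the event $\{L = k\}$ is not $\mathcal{F}_k$-measurable in the forward filtration (it looks into the future), so one cannot naively apply the forward Markov property ``at time $k$''. The resolution is the standard one: $\{L = k\} = \{Y_k \in D\} \cap \{Y_{k+1} \notin D, Y_{k+2} \notin D, \ldots\}$, and the second event, conditionally on $Y_k = z_0$, is exactly $\{L_D = 0\}$ for a chain restarted at $z_0$ and is independent of the pre-$k$ trajectory by the (forward) Markov property at time $k$. Making this factorization rigorous — i.e., justifying the first displayed equation above — is the crux; everything after it is bookkeeping with the Green kernel. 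I expect to state it as: by the Markov property at time $k$, conditionally on $Y_k = z_0$ the pre-$k$ path and the post-$k$ path are independent, and $\{L = k\}$ intersected with $\{Y_k = z_0\}$ is (pre-$k$ information) $\times \{$post-$k$ chain never returns to $D\}$.
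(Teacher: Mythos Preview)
Your proposal is correct and follows essentially the same route as the paper: both compute the joint law $\P_\mu(Y_L=z_0,\ldots,Y_{L-n}=z_n)$ by summing over the value of $L$, factor out $\P_{z_0}(L_D=0)$ via the forward Markov property, collapse the sum into $\G(\mu,z_n)\prod_i q(z_i,z_{i-1})$, and read off the conditional law as a ratio depending only on $(z_n,z_{n+1})$. The paper's treatment of the boundary cases is slightly more explicit (it records $\P_\mu(Y_{L-(n+1)}=* \mid Y_{L-n}=z_n)=\mu(z_n)/\G(\mu,z_n)$), but your absorption-at-$*$ argument covers the same ground.
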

\begin{proof} For any $z_0,\ldots, z_n\in S$ we have
\begin{eqnarray}\label{eq:long} \P_{\mu}(Y_{L-n}=z_n, \ldots, Y_L=z_0) &=& \sum_{\ell \geq n} \P_{\mu}(L=\ell,  Y_{\ell-n}=z_n, \ldots, Y_\ell=z_0)
\nonumber \\ &=& \sum_{\ell \geq n} q^{\ell-n}(\mu,z_n) q(z_n,z_{n-1})\cdots q(z_1,z_0) \P_{z_0}(L=0)\nonumber \\ &=& \G(\mu,z_n)  q(z_n,z_{n-1})\cdots q(z_1,z_0) \P_{z_0}(L=0) \, .
\end{eqnarray} Note that if $z_0\not\in D$, then both sides of the above vanish. When the right hand side of \eqref{eq:long} is non-zero, we can write the same formula  for $z_0,\ldots, z_{n+1}\in S$ and obtain that
\be\label{eq:cond1} \P_{\mu}( Y_{L-(n+1)}= z_{n+1} \mid Y_{L-n}=z_n, \ldots, Y_L=z_0 ) = \frac{\G(\mu,z_{n+1})q(z_{n+1},z_n)}{\G(\mu,z_n)} \, .\ee
If $z_0,\ldots ,z_n\in S$ and the expression in \eqref{eq:long} is nonzero but $z_{n+1}=*$, then the left hand side of \eqref{eq:cond1} equals $\mu(z_n)/\G(\mu,z_n)$. Lastly, if $z_0,\ldots, z_k\in S$ for $k<n$ but $z_{j}=*$ for $j\in [k+1,n]$, then the left hand side of \eqref{eq:cond1} equals $1$ if $z_{n+1}=*$ and $0$ otherwise.
\end{proof}


\noindent {\bf Proof of \cref{thm:dynkin}.}  Fix $x\in S$ and let $D\subset S$ be a finite set with $\mu(D)=1$. Next, we will verify that the process
$ S_n : = M(x, Y_{L - n})  \, $ is an $\{\F_n\}$-supermartingale, where $\F_n$ was defined in \eqref{filter}.
Indeed, for each $z_n\in S$, on the event $\{Y_{L-n} = z_n\}$, we have by \eqref{eq:cond1} that
\begin{eqnarray*} \E [S_{n+1} | \F_{n}] &=& \sum_{z_{n+1}\in S} \frac{\G(\mu,z_{n+1})q(z_{n+1},z_n)}{\G(\mu,z_n)} M(x,z_{n+1}) = \sum_{z_{n+1}\in S} \frac{q(z_{n+1},z_n)\G(x,z_{n+1})}{\G(\mu,z_n)}  \\ &\leq& \frac{\G(x,z_n)}{\G(\mu,z_n)} = S_n \,.
\end{eqnarray*}
(To justify the inequality above, note that $\G(x,z_{n+1})q(z_{n+1},z_n) $ is the expected number of crossings of the directed edge $(z_{n+1},z_n)$ by the chain started at $x$.)   Also, on the event $\{Y_{L-n}=*\}$, we have $\E [S_{n+1} | \F_{n}]=0$. Thus $\{S_n\}$ is a  supermartingale. Doob's upcrossing lemma \cite[108]{Williams} gives that for any $b>a>0$,  the number  $U_n[a,b]$ of upcrossings of the interval $[a,b]$ by the sequence $\{S_j\}_{j=0}^n$, satisfies
$$ (b-a)\E U_n[a,b] \leq \E[(S_n-a)^-]= \E[\max(a-S_n,0) ]\leq a \,,$$
where  the last inequality holds since $S_n \geq 0$. Hence $\E U_\infty[a,b] \leq {a \over b-a}$. We deduce that the expected number of downcrossings of the interval $[a,b]$ by the sequence $\{M(x,Y_{j})\}_{j=0}^L$ is at most ${a \over b-a}$. Replacing the set $D$ by a sequence of sets exhausting $S$ and using the monotone convergence theorem, gives that the expected number of downcrossings of $[a,b]$ by $\{M(x,Y_{j})\}_{j=0}^\infty$ is at most ${a \over b-a}$. Hence almost surely the number of downcrossings of any rational interval is finite; furthermore, by \eqref{eq:Minequality} the sequence $\{M(x,Y_{j})\}_{j=0}^\infty$ is bounded, concluding the proof that $M(x,Y_n)$ converges almost surely. Denote this limit by $M_\infty(x)$. 

Lastly, to show that $\E M(x,Y_n) \to 1$, it suffices to show that $\E M_\infty(x)=1$, by \eqref{eq:Minequality} and the bounded convergence theorem. We take a sequence of finite subsets $D_k \subset S$ such that $x\in D_k$ and $\mu(D_k)=1$ for all $k$ and $\cup_k D_k = S$. Then by \cref{lem:ExpectationEq1} we have  
$$ \E [ M(x,Y_{L_k}) ] = 1 \, ,$$
where $L_k = L_{D_k}$. We are done since $M(x,Y_{L_k})$ converges to $M_\infty(x)$ almost surely. \qed \\

\noindent {\bf Proof of \cref{thm:convergence}.} We apply the previous theorem with transition probabilities $\q=p^h$ and state space $S_h = \{v : h(v)>0\}$. We write $\G^h$ and $M^h$ for the corresponding Green and Martin kernels. The initial state is distributed as ${\mu_h}(v) = c_{\o v} h(v) $ which satisfies $\G^h({\mu_h},y)>0$ for any $y\in S_h$. Indeed, given $y\in S_h$, the connected component of $y$ in $V\setminus \{\o\}$ is a subset of $S$ (by harmonicity of $h$ on $V\setminus \{\o\}$) and contains a vertex $v$ for which ${\mu_h}(v)>0$. By \cref{claim:GhMu}, the Martin kernel for the $h$-process satisfies
\be\label{eq:MartinKernelIs} M^h(x,y) = {\G^h(x,y) \over \G^h(\mu_h,y)}= \frac{\g_\o(x,y)}{h(x)} \, .\ee
\cref{thm:dynkin} implies that the dipoles $g_\o(x,\Y_n)$ converge almost surely for every $x\in S_h$. Observe that $g_\o(x,y)=0$ when $x\not \in S_h$ and $y\in S_h$ since every path from $x$ to $y$ must go through $\o$. Thus we may define $H:V\to [0,\infty)$ as the almost sure limit 
$$ H(x) = \lim_{n \to \infty} g_\o(x,\Y_n) \, .$$
The pointwise convergence and   \eqref{eq:dipole11} imply that $H$ is harmonic off $\o$ and   $\Delta H(\o)=1$;  Thus, $H$ is a potential almost surely. 

By \cref{thm:dynkin} and \eqref{eq:MartinKernelIs}, we deduce that $\E g_\o(x,Y_n) \to h(x)$. The inequality $g_\o(x,y) = g_\o(y,x) \leq g_\o(x,x)$  and the bounded convergence theorem imply that for all $x\in V$, we have $\E H(x) = h(x)$ . Finally, if the event $A = \{H(x_0) > h(x_0)\}$ has positive probability for some $x_0 \in V$, then $h_A(x):=  \E [ H(x) \mid A]$ is a potential,  since for all $x \in V$, 
$$  \Delta h_A (x)  = \E [\Delta  H(x) \mid A]={\bf 1}_{\o}(x) \,.$$
Therefore, 
the representation 
$$ h(x) = \P(A) h_A(x) + P(A^c) h_{A^c}(x) \, ,$$
shows that $h$ is not extremal, since $h_A(x_0)>h(x_0)$. Thus, if $h$ is extremal, then $H\equiv h$ almost surely.\qed \\

\section{Potentials and uniform spanning trees}\label{sec:PotentialsUST}
In a finite network $(G,c)$, the weighted uniform spanning tree (UST) is a random spanning tree of $G$ drawn with probability proportional to the product of conductances on its edges. In a  recurrent infinite network, the weighted UST is obtained as a weak limit from an exhaustion by finite subnetworks, see~\cite{BLPS} or \cite[Chapter 10]{TheBook}. 

Recall that an infinite graph has a single end if the complement of any finite set of vertices contains a single infinite connected component. Theorem 14.2 in~\cite{BLPS} states  that if the weighted UST in $(G,c)$ has a single end, then harmonic measure from infinity exists on any finite set of vertices, which by  \cref{thm:PotentialHarmonicMeasure}, means that $(G,c,\o)$ has a unique potential for each $\o$. The following lemma, together with \cref{cor:FiniteIntersection}, gives a stronger statement.

\begin{lemma}\label{lem:Not1Ended} Let $(G,c,\o)$ be a recurrent rooted network and suppose that $h_1,h_2$ are potentials (possibly equal). Assume that $\{Y_n\}$ is an $h_1$-process and that $\{Z_n\}$ is an independent $h_2$-process. If $\P(|\{Y_n\} \cap \{Z_n\}|<\infty) >0$, then the UST on $(G,c)$ is almost surely multiply ended.
\end{lemma}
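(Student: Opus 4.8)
The strategy is to prove the contrapositive: assuming the UST on $(G,c)$ is one-ended almost surely, I will show that any $h_1$-process $\{Y_n\}$ and any \emph{independent} $h_2$-process $\{Z_n\}$ intersect infinitely often almost surely. The natural tool is Wilson's algorithm rooted at infinity (as developed in~\cite{BLPS}), which generates the UST by successively loop-erasing random walk trajectories; in a one-ended recurrent network this construction is available because harmonic measure from infinity exists (Theorem 14.2 of~\cite{BLPS}, invoked above). The point of contact with potentials is \cref{cor:ExitMeasureHarmonicMeasure} and \cref{rmrk:cool2}: when the potential is unique, the time-reversed $h$-process is exactly the ``random walk from infinity,'' whose loop-erasure is a branch of the one-ended UST. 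When $h_1 \neq h_2$ we would instead work with the two roots or appeal to \cref{claim:uniquenessDoesNotDependOnRoot}, but the essential case is $h_1=h_2=h$ with $h$ the unique potential.

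The key steps, in order, are as follows. First, reduce to showing that two \emph{independent} copies of the ``random walk from infinity to $\o$'' (\cref{rmrk:cool2}) have infinite intersection; this uses \cref{cor:ExitMeasureHarmonicMeasure} to identify the reversed $h$-process, run up to the last exit from a large ball, with a network random walk started from harmonic measure from infinity, and then lets the ball exhaust $G$. Second, observe that in a one-ended UST, the past of a vertex (the finite component cut off on the $\o$-side) is small, while the future — the unique ray to infinity — is shared in the following sense: by Wilson's algorithm from infinity, the branch of the UST from $\o$ is the loop-erasure of the random walk from infinity. Third, run Wilson's algorithm twice in a coupling: both the $Y$-branch and the $Z$-branch are, after loop-erasure, rays in the \emph{same} one-ended tree $\mathcal{T}$, hence they must eventually merge (two rays in a one-ended tree are eventually equal). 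Fourth, translate ``the loop-erased paths eventually coincide'' back into ``the original walks intersect infinitely often'': since the loop-erasure of $\{Y_n\}$ and of $\{Z_n\}$ agree from some vertex onward, and each loop-erased vertex is visited by the corresponding walk, the two walks visit infinitely many common vertices. Finally, remove the independence-versus-coupling discrepancy: the event $\{|\{Y_n\}\cap\{Z_n\}|=\infty\}$ depends only on the pair of trajectories, and its probability under the independent coupling is what we need; since we have shown it equals $1$ under \emph{a} coupling that has the correct marginals and independence, and the event is determined by the two marginal laws together with their independence, we are done. (Concretely: Wilson's algorithm from infinity can be run so that the two successive random-walk-from-infinity trajectories are independent, matching the hypothesis.)

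The main obstacle I anticipate is the rigorous handling of the ``random walk from infinity'' and Wilson's algorithm rooted at infinity in the recurrent setting: one must be careful that, in a one-ended recurrent network, the relevant objects (harmonic measure from infinity, the interlacement-type random walk from infinity, loop-erasure thereof) are well-defined and that Wilson's algorithm from infinity indeed produces the UST — this is exactly the content of~\cite{BLPS}, so it may be cited, but the bookkeeping connecting their ``random walk to infinity'' with our time-reversed $h$-process via \cref{cor:ExitMeasureHarmonicMeasure} requires care. A secondary subtlety is the passage from ``loop-erased trajectories eventually merge'' to ``the walks themselves intersect infinitely often''; this is essentially immediate once one notes that every vertex surviving in the loop-erasure lies on the original path, but one should phrase it so that infinitely many \emph{distinct} common vertices are produced (which follows because the common ray is infinite and each of its vertices appears on both walks). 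The rest — exhausting $G$ by balls, using transience of the $h$-processes to make the reduction in \cref{cor:FiniteIntersection}-style arguments, and invoking \cref{claim:uniquenessDoesNotDependOnRoot} when $h_1\neq h_2$ — is routine.
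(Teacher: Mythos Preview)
Your contrapositive strategy has a genuine gap in Step~3. You assert that for two \emph{independent} $h$-processes $\{Y_n\}$ and $\{Z_n\}$, their loop-erasures are rays in \emph{the same} one-ended tree $\mathcal{T}$, and therefore must merge. But this is not what Wilson's algorithm gives you. If $Y$ and $Z$ are independent, then $\mathrm{LE}(Y)$ and $\mathrm{LE}(Z)$ are two \emph{independent samples} from the law of the UST ray emanating from $\o$; they live in two independent copies of the UST, not in one tree. There is no reason two independent draws from a distribution on rays should share an infinite tail. Your parenthetical attempt to fix this (``Wilson's algorithm from infinity can be run so that the two successive random-walk-from-infinity trajectories are independent'') does not work: in Wilson's algorithm, once the first branch is laid down, the second walk is stopped upon hitting that branch. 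You may extend it independently afterward to get a full trajectory with the right marginal law, but the loop-erasure of the \emph{extended} walk is no longer a path in $\mathcal{T}$. So you cannot simultaneously have (i) both loop-erasures lying in a common tree and (ii) the two full trajectories being independent.

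The paper's proof avoids this entirely by arguing directly rather than by contrapositive, and by using Wilson's algorithm rooted at the finite vertex $\o$. From the hypothesis one extracts $r$ and $\eps>0$ with $\P\big(\{Y_n\}\cap\{Z_n\}\subset B(\o,r)\big)\ge \eps$. For any $R\ge r$, let $\nu_i$ be the last-exit law of the $h_i$-process from $B(\o,R)$. By \cref{claim:HUntilExit}, the time-reversed $h_i$-process up to last exit is a network random walk from $\nu_i$ to $\o$. Running Wilson's algorithm rooted at $\o$ with these two (genuinely independent) random walks as the first two steps, the finite-intersection event forces the second walk to first meet the loop-erasure of the first inside $B(\o,r)$. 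Thus with probability $\ge\eps$ the UST has two disjoint arms of length $\ge R-r$ from $B(\o,r)$; letting $R\to\infty$ and invoking the $0$--$1$ law in \cite{BLPS} gives multiple ends almost surely. Note this argument never needs uniqueness of the potential (so it handles arbitrary $h_1,h_2$ without the detour through \cref{claim:uniquenessDoesNotDependOnRoot}), and it uses only the standard finite-rooted Wilson algorithm.
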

\begin{proof}
The hypothesis implies that there for some integer $r \ge 1$ and real $\eps>0$,  
\be\label{eq:intersectClose} \P\Big (\{Y_n\} \cap \{Z_n\} \subset B(\o,r) \Big ) \geq \eps \, .\ee
Fix $R\geq r$ and take   $D=B(\o,R)$. Denote by   $L_1 = \max \{k : Y_k \in D\}$
the  last exit time for $\{Y_n\}$ and let $\nu_1$ be the law of the last exit vertex $Y_{L_1}$.  Similarly, let $\nu_2$ be the law of the last exit vertex for $\{Z_n\}$ from $D$. To sample the weighted UST, we will use Wilson's algorithm rooted at $\o$. We first run a network random walk started at a random vertex $w_1$  with law  $\nu_1$ and   stopped at $\o$. After erasing loops, we obtain a simple path $\gamma_1$ from $w_1$ to $\o$. Next, we run a network random walk started at a random vertex $w_2$  with law  $\nu_2$ and   stopped when it intersects $\gamma_1$. Denote by $\gamma_2$ the loop ersure of this walk.  By \cref{claim:HUntilExit} and \eqref{eq:intersectClose}, we deduce that with probability at least $\eps$, the  intersection point of $\gamma_1$ and $\gamma_2$ is in $B(\o,r)$. 

Thus, with probability at least $\eps$, the UST has a vertex in $B(\o,r)$ from which  two disjoint simple paths of length at least $R-r$ emanate. Since $R\geq r$ was arbitrary, we conclude that with probability at least $\eps$,  the UST has multiple ends; by Theorem 8.3 in  \cite{BLPS}, 
this probability is 1. \end{proof}

\begin{remark}\label{rem:manyends} The preceding lemma can be strengthened as follows. Suppose $(G,c,\o)$ is a recurrent rooted network and $\{h_j\}_{j=1}^k$ are potentials (possibly equal). 
Sample    independent $h_j$-processes for  $j=1, \ldots,k$.   If the event that all pairwise intersections of their trajectories are finite has positive probability, then the UST on $(G,c)$ has at least $k$ ends almost surely. 
This extension is proved by repeating the argument used above:  We start a network random walk from  a vertex $w_3$  with law  $\nu_3$ (the exit measure from $D$ by the $h_3$ process) and   stopped when it intersects $\gamma_1 \cup \gamma_2$, etc.
\end{remark}

 \bigskip 
 
 Let $(G,c,\o)$ be a recurrent rooted network with a unique potential $h$. In \cite[Propositions 7.11 and 7.12]{BereEngel} it is shown that under additional assumptions (either unimodularity or planarity) two independent trajectories of the $h$-process must have infinite intersection almost surely. 

Next we show that these assumptions cannot be removed. In particular, we construct a recurrent rooted network where the potential $h$ is unique, but a.s., two independent $h$-process trajectories have finite intersection. 
First, let $d \ge 5$ and consider $\mathbb{Z}^d$ with the usual graph structure and unit edge conductances. For any vertex $v \in \mathbb{Z}^d$, let $\varphi(v) := \P_v(\tau_\o < \infty)$. 

\begin{prop}\label{prop:counterexample1} Fix $d\geq 5$ and for each edge $\{x,y\}$ in  $\mathbb{Z}^d$, let $c_{xy}:=\varphi(x) \varphi(y)$.  The network $(\mathbb{Z}^d, c,\o)$ is recurrent, has a unique potential $h$, and two independent trajectories of the $h$-process have finite intersection almost surely. Therefore, the weighted UST on this network is multiply ended almost surely.  
\end{prop}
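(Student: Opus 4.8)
The plan is to exploit the fact that the network random walk on $(\mathbb{Z}^d, c, \o)$ with $c_{xy} = \varphi(x)\varphi(y)$ is precisely the Doob $\varphi$-transform of the ordinary simple random walk on $\mathbb{Z}^d$ killed at $\o$, where $\varphi(v) = \P_v(\tau_\o < \infty)$ is the (superharmonic, harmonic off $\o$) return probability for $d \ge 5$. First I would record the standard facts about $\varphi$: it is harmonic on $\mathbb{Z}^d \setminus \{\o\}$, satisfies $\varphi(\o) = 1$, $\varphi(v) = \Theta(|v|^{2-d}) \to 0$ as $v \to \infty$, and (after a suitable normalization) $\varphi$ is — up to scaling — $1 - c\,G(\o, \cdot)$ for the Green function $G$ of simple random walk. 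Consequently $\Delta^{\mathrm{SRW}}\varphi = -\delta_\o \cdot(\text{const})$ in the unweighted Laplacian. From this I would derive that the $c$-weighted network random walk has transition probabilities $p^c(x,y) \propto \varphi(x)\varphi(y) = $ (up to the symmetric normalization) the $\varphi$-transform of SRW-killed-at-$\o$; its reversing measure is $c_v = \sum_{x \sim v} \varphi(v)\varphi(x) = \varphi(v)\sum_{x\sim v}\varphi(x) = 2d\,\varphi(v)^2$ for $v \ne \o$, hence $\sum_v c_v < \infty$ because $\sum_v \varphi(v)^2 \asymp \sum_v |v|^{4-2d} < \infty$ for $d \ge 5$. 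A reversible chain with finite total conductance mass is recurrent (it is positive recurrent), which gives recurrence of the network.

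Next I would identify the potential. Since the $c$-weighted walk is (a time-change of) the $\varphi$-transform of SRW killed at $\o$, which is transient and tends to infinity, the natural candidate potential is $h = \varphi^{-1} - 1$, or more precisely $h(v) = (\varphi(v)^{-1} - 1)/Z$ for the constant $Z$ making $\Delta^c h(\o) = 1$. One checks directly: $h(\o) = 0$; and for $v \ne \o$, $\Delta^c h(v) = \sum_{x\sim v} \varphi(v)\varphi(x)\big(h(x) - h(v)\big)$; writing $h = (\varphi^{-1}-1)/Z$ this becomes $\frac{1}{Z}\sum_x \varphi(v)\varphi(x)(\varphi(x)^{-1} - \varphi(v)^{-1}) = \frac{1}{Z}\sum_x \big(\varphi(v) - \varphi(x)\big) = \frac{\varphi(v)}{Z}\sum_x\big(1 - \varphi(x)/\varphi(v)\big)$ — and one massages this using harmonicity of $\varphi$ at $v$ (i.e. $\sum_x \varphi(x) = 2d\,\varphi(v)$, which makes $\sum_x(\varphi(v) - \varphi(x)) = 2d\varphi(v) - 2d\varphi(v) = 0$), so $\Delta^c h(v) = 0$. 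The Laplacian at $\o$ is a finite nonzero constant, fixing $Z$. For uniqueness of the potential: by \cref{thm:PotentialHarmonicMeasure} it suffices to show harmonic measures from infinity exist on every finite set; alternatively — and more cleanly — I would argue that any potential $h'$ yields $h' \cdot \varphi$ bounded (since $h'$ is $\Reff$-Lipschitz by \cref{cor:PotentialLipschitz} and $\Reff(\o \lr v)$ grows slowly while... ) — actually the cleanest route is: the $h$-process for our $h$ is, by the telescoping identity \eqref{eq:hprocessvsSRWSimple}, the $\varphi^{-1}$-reweighting of the $c$-walk, which unwinds to ordinary transient SRW on $\mathbb{Z}^d$ conditioned to avoid $\o$; since SRW on $\mathbb{Z}^d$, $d \ge 5$, conditioned to avoid $\o$ still converges to a unique Martin boundary point (the single point at infinity, as $\mathbb{Z}^d$ has trivial Martin boundary for $d\ge 3$ for the killed walk — the relevant statement is that $\g_\o(\cdot, y_n)$ converges pointwise as $y_n\to\infty$), and by the Remark after \cref{claim:compact} pointwise convergence of $\g_\o(\cdot, y_n)$ implies the potential is unique.

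Then the $h$-process itself: unwinding the Doob transforms, the $h$-process on this network is (up to the harmless time-change coming from the $c_v$ weights, which does not affect the range) the same as ordinary simple random walk on $\mathbb{Z}^d$ conditioned to never hit $\o$ — concretely, $\P^h_{\mu_h}(\gamma) = c_\o \P_\o(\o\gamma) h(v_k)$ from \eqref{eq:hprocessvsSRW}, and substituting $h \propto \varphi^{-1}-1$ and $\P_\o(\o\gamma)$ in terms of SRW-killed weights gives, after cancellation, the law of $\varphi$-conditioned SRW. So two independent $h$-process trajectories are two independent copies of SRW-on-$\mathbb{Z}^d$-conditioned-to-avoid-$\o$. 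Because $d \ge 5$, two independent transient simple random walks (even conditioned versions, whose laws are absolutely continuous on finite time windows and whose tail behavior is governed by the same Green function $\asymp |x-y|^{2-d}$) have finite intersection almost surely — this is the classical fact that the expected number of intersections $\sum_{x} G^h(\mu_1, x)G^h(\mu_2,x) \asymp \sum_x |x|^{2(2-d)} < \infty$ for $2(d-2) > d$, i.e. $d > 4$. I would make this rigorous by bounding $\G^h(x,y)$ via \eqref{eq:XvsY}: $\G^h(x,y) = \G_\o(x,y)\,h(y)/h(x)$, and $\G_\o(x,y) = \g_\o(x,y) c_y = \g_\o(x,y)\cdot 2d\varphi(y)^2$ with $\g_\o(x,y) \asymp |x-y|^{2-d}$ and $h(y) \asymp \varphi(y)^{-1} \asymp |y|^{d-2}$; so the expected number of pairs $(m,n)$ with $Y_m = Z_n$ is $\sum_y \G^h(\mu_h, y)^2 = \sum_y (h(y) c_y)^2 \asymp \sum_y (|y|^{d-2}\cdot|y|^{4-2d})^2 = \sum_y |y|^{2(2-d)} < \infty$, using \eqref{eq:Gh}. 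Finiteness in expectation gives finiteness a.s. Finally, \cref{lem:Not1Ended} applied with $h_1 = h_2 = h$ (whose hypothesis is now satisfied) yields that the weighted UST is multiply ended a.s.

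The main obstacle I anticipate is not the intersection estimate — that is a routine Green-function computation once the $h$-process is identified — but rather cleanly establishing \emph{uniqueness} of the potential, since one must rule out "extra" potentials not captured by the $\varphi^{-1}-1$ construction; the right tool is the Remark after \cref{claim:compact} combined with the classical triviality of the Martin boundary of simple random walk on $\mathbb{Z}^d$ killed at a point (for $d \ge 3$), i.e. that $\g_\o(\cdot, y_n)$ has a unique pointwise limit as $y_n \to \infty$, and I would need to cite or sketch that (it follows, e.g., from the asymptotics $\g_\o(x,y) = a(y) + a(x) - a(x-y) + o(1)$-type expansions, or from a Harnack/coupling argument for the killed walk). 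A secondary technical point is handling the time-change between the $c$-weighted walk and plain SRW carefully enough that the identification of ranges (not just laws of individual steps) is valid — but since the range of a trajectory is unaffected by time-changes that only depend on the current state, this is straightforward.
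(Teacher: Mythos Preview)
Your approach matches the paper's: identify the network walk as the $\varphi$-transform of SRW, so the $h$-process is SRW conditioned to avoid $\o$, then invoke \cref{lem:Not1Ended}. Two sub-steps differ in detail. For recurrence, you use $\sum_v c_v \asymp \sum_v \varphi(v)^2 < \infty$ (positive recurrence); the paper instead computes $\mathbb{Q}_v(\tau_\o<\infty)=1$ directly via the telescoping identity---both are short. For finite intersection, your Green-function sum $\sum_y \G^h(\mu_h,y)^2 = \sum_y (h(y)c_y)^2 \asymp \sum_y |y|^{4-2d} < \infty$ is self-contained; the paper instead cites the classical $d\ge 5$ intersection fact for unconditioned SRW and transfers it to the conditioned walks by absolute continuity (conditioning on a positive-probability event).

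The one place your plan is loose is uniqueness. The paper's argument is the single line you almost write but then abandon: if $h'$ is any potential on $(\mathbb{Z}^d,c,\o)$, then $h'\varphi$ is a potential on $(\mathbb{Z}^d,1,\o)$ (immediate from $q(x,y)=p(x,y)\varphi(y)/\varphi(x)$), and then $h'\varphi + \g^1(\o,\cdot)$ is positive harmonic on all of $\mathbb{Z}^d$, hence constant by Liouville, forcing $h'\varphi = \g^1(\o,\o)(1-\varphi)$. Your Martin-boundary route can be made to work, but it requires the identity $\g_\o^c(x,y) = \g_\o^1(x,y)/(\varphi(x)\varphi(y))$ and then the limit $\g^1(x,y)/\g^1(\o,y)\to 1$ as $y\to\infty$---which is the same Liouville fact, just packaged less directly. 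Finally, drop the references to ``time-change'': the $c$-network walk and the $\varphi$-transform of SRW have identical transition probabilities at every $x\ne\o$, so ranges agree with no reparametrization needed.
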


\begin{proof} 
Denote by $\mathbb{Q}_v$ the law of the network random walk on $(\mathbb{Z}^d, c)$, started at $v$, and by $q(x,y)$ the corresponding transition probabilities.  
For every two adjacent vertices $x,y$ such that $x \ne \o$, we have (using harmonicity of $\varphi$ at $x$ in the third equality) that
\be \label{qxy}
q(x,y)=\frac{c_{xy}}{\sum_{z}c_{xz}}=\frac{\varphi(y)}{\sum_{z\sim x}\varphi(z)}=\frac{\varphi(y)}{ 2d\varphi(x)}={p(x,y) \varphi(y) \over \varphi(x) } \,,
\ee
where $p(x,y)$ are the transition probabilities for simple random walk on $\mathbb{Z}^d$.

Next, we show that
  that the network $(\mathbb{Z}^d, c)$ is recurrent. Indeed,  for every vertex $v \neq \o$, 
  summing over paths from $v$ that reach $o$ only in their final step, we obtain
 $$ \mathbb{Q}_v ( \tau_\o < \infty ) = {\varphi(\o) \over \varphi(v)} \P_v(\tau_\o < \infty)  = \varphi(\o)= 1 \, .$$

It is straightforward that if $h$ is a potential for the rooted network $(\mathbb{Z}^d, c,\o)$, then $h \varphi$ is a potential for $(\mathbb{Z}^d, 1, \o)$. Let $\g(x,y)$ be the Green density on $(\mathbb{Z}^d, 1)$. For any potential $H$ on $(\mathbb{Z}^d, 1, \o)$, the function $H(x) + \g(\o,x)$ is a positive  harmonic function on $(\mathbb{Z}^d, 1)$,  hence it must be constant. Therefore,   $H(x)= \g(\o,\o) - \g(\o,x)=\g(\o,\o)(1-\varphi(x))$ for all $x \in \mathbb{Z}^d$. We deduce that the network $(\mathbb{Z}^d, c,\o)$ also has a unique potential $h(x)=\g(\o,\o)(1-\varphi(x))/\varphi(x)$.

Let $\mathbb{Q}_\mu^h$ be the corresponding $h$-process with initial distribution $\mu=\mu_h$ defined in \eqref{def:mu}. Note that $\mu_h$ is just the uniform distribution on the $2d$ neighbors of $\o$. Let $\gamma=(v_1,\ldots,v_k)$ be a path in $\Z^d$ that starts at a neighbor of $\o$ and does not visit $\o$, then by \eqref{eq:hprocessvsSRW}
\begin{eqnarray*}\mathbb{Q}_\mu^h(\gamma) &=& c_\o \mathbb{Q}_\o(\o\gamma)h(v_k) = c_\o \mathbb{Q}_\mu(\gamma)h(v_k) = c_\o \P_\mu(\gamma) {\varphi(v_k) \over \varphi(v_1)} h(v_k) \\ &=& c_\o \P_\mu(\gamma) {\g(\o,\o)(1-\varphi(v_k)) \over \varphi(v_1)} 
  = \P_\mu(\gamma) {
  1-\varphi(v_k) \over 1- \varphi(v_1)} \, ,
\end{eqnarray*}
since $c_\o=2d\varphi(v_1)$ and $\g(\o,\o)=\G(\o,\o)/2d={1 \over 2d(1-\varphi(v_1))}$. This proves that the $h$-process on $(\mathbb{Z}^d, c,\o)$ is precisely a simple random walk on $(\mathbb{Z}^d, 1, \o)$ started at a uniformly chosen neighbor of $\o$, conditioned on the positive probability event of never visiting $\o$. 

To conclude, recall that for $d\geq 5$, the traces of two independent simple random walks on $\mathbb{Z}^d$ have finite intersection almost surely, hence, the same conclusion holds for the traces of two independent $h$-processes on $(\mathbb{Z}^d, c,\o)$. \cref{lem:Not1Ended} finishes the proof. \end{proof}
\begin{remark}\label{rem:manyends2}
It follows from \cref{rem:manyends} that in the preceding example, the UST has infinitely many ends.
\end{remark}
 The network presented in \cref{prop:counterexample1}  has conductances tending to zero on the edges. 
For completeness, we also describe an unweighted recurrent rooted graph $(G,\o)$ with a unique potential, where the UST has multiple ends almost surely. 
Start with an infinite binary tree $T_2$ with root $\o$. For each $k \ge 1$, identify level $4k$ of $T_2$ with a 4-dimensional face $F_k\cong \{1,\dots ,2^k\}^4$ of a 7-dimensional discrete cube $Q_k \cong \{1,\dots ,2^k\}^7$. All these cubes are pairwise disjoint. The graph $G_0$  obtained as a union of $T_2$ and all these cubes is transient. Let $\varphi(v)$ be  the probability that simple random walk (SRW) starting from a vertex $v$ of $G_0$ visits the root $\o$. Then for $v$ at level $\ell$ of $T_2$   we have $\varphi(v)=2^{-\ell}$ and $\varphi \equiv 2^{-4k}$ on $Q_k$.
Write $c_{xy}=\varphi(x)\varphi(y)$ for adjacent nodes $x,y$ in $G_0$.
\begin{prop}\label{prop:countergraph}
Define a graph $G$ by subdividing every edge $xy$ in $G_0$ into $c_{xy}^{-1}$ new edges (recall that $c_{xy}^{-1}$ is a positive integer). Then the rooted graph $(G,\o)$ is recurrent with a unique potential, yet the UST in $G$ has multiple ends almost surely. 
\end{prop}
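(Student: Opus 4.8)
The plan is to reduce the claim to the structure already established for $(G_0, c)$ and to mimic the strategy of \cref{prop:counterexample1}. The key point is that subdividing an edge of resistance $r$ into $r$ unit-resistance edges is an operation that preserves effective resistances between original vertices, hence preserves recurrence, and also preserves the Green density $\g_\o(x,y)$ for original vertices $x,y$ (up to the identification of the subdivision as a series network); more precisely, the trace of simple random walk on $G$ onto the vertex set of $G_0$ is exactly the network random walk on $(G_0, c)$. So I would first record this dictionary: (i) $(G,\o)$ is recurrent iff $(G_0, c)$ is recurrent; (ii) potentials on $(G,\o)$ restrict to potentials on $(G_0, c, \o)$, and conversely every potential on $(G_0, c, \o)$ extends uniquely to $G$ by making it affine (i.e. harmonic) along each subdivided edge; (iii) under this correspondence, uniqueness of the potential for $(G, \o)$ is equivalent to uniqueness for $(G_0, c, \o)$, and the $h$-process on $G$, observed on the vertices of $G_0$, is the $h$-process on $(G_0,c,\o)$.

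Next I would establish recurrence and uniqueness for $(G_0, c, \o)$ exactly as in the proof of \cref{prop:counterexample1}: since $\varphi$ is harmonic for SRW on $G_0$ off $\o$, for adjacent $x, y$ with $x \neq \o$ the network transition probability satisfies $q(x,y) = p(x,y)\varphi(y)/\varphi(x)$, so summing over paths from $v \neq \o$ that hit $\o$ only at the last step gives $\mathbb{Q}_v(\tau_\o < \infty) = \varphi(\o)^{-1}\varphi(v)^{-1}\cdot\varphi(\o)\varphi(v)\P_v(\tau_\o<\infty) = \varphi(\o) = 1$, i.e. $(G_0, c)$ is recurrent. For uniqueness: if $h$ is a potential for $(G_0, c, \o)$ then $h\varphi$ is a potential for the transient network $(G_0, 1, \o)$ (one checks the Laplacian identity directly), and for any such potential $H$ the function $H + \g^{G_0}(\o, \cdot)$ is a bounded-below positive harmonic function on transient $(G_0,1)$. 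Here I cannot immediately conclude it is constant as in $\mathbb{Z}^d$ — $G_0$ need not be a Liouville network — so the argument needs care: instead I would argue that $H + \g^{G_0}(\o,\cdot)$ is a \emph{nonnegative} function that is harmonic everywhere including at $\o$ (the Laplacians of $H$ and of $\g^{G_0}(\o,\cdot)$ at $\o$ cancel), hence $H(x) + \g^{G_0}(\o,x) \ge 0$ and, being harmonic and bounded below, it equals its minimum plus a nonnegative harmonic function; to pin it down I would use that simple random walk on $G_0$ is transient with $\g^{G_0}(\o, x) \to 0$ along any transient path, forcing $H(x) \to \liminf$ of a constant, and then invoke that $(G_0,1,\o)$ admits a unique potential because $G_0$ is the union of a tree with "fat" ($7$-dimensional) blobs — the uniqueness here can be seen from the fact that SRW on $G_0$ has harmonic measures from infinity, each infinite simple path in $T_2$ merging into a unique blobless escape route, so by \cref{thm:PotentialHarmonicMeasure} applied to $(G_0, 1)$ the potential there is unique, hence $H$ is unique, hence $h = H/\varphi$ is the unique potential for $(G_0, c, \o)$, and by the dictionary $(G, \o)$ has a unique potential.

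Finally, for the multiple-ends conclusion I would apply \cref{lem:Not1Ended} to $(G, \o)$ with $h_1 = h_2 = h$. By the dictionary, two independent trajectories of the $h$-process on $G$ intersect (at vertices of $G_0$) exactly when the corresponding $h$-processes on $(G_0, c, \o)$ intersect, and since $q(x,y) = p(x,y)\varphi(y)/\varphi(x)$ the $h$-process on $(G_0, c, \o)$ is — by the same telescoping computation as in \cref{prop:counterexample1}, using $h\varphi = \g^{G_0}(\o,\o) - \g^{G_0}(\o,\cdot)$ — simply SRW on $G_0$ started at a neighbor of $\o$ and conditioned never to return to $\o$. Two independent such walks, once they have entered the tree part above a common blob-free level, behave like independent transient walks on $G_0$, and I would show their traces intersect only finitely often: with positive probability each walk escapes to infinity through a different subtree of $T_2$ below some level $4k$ (the binary branching gives this), after which they can never meet; alternatively, even within a single escape route, the relevant obstruction is handled because the blobs $Q_k$ are $7$-dimensional and two independent SRW traces in $\mathbb{Z}^7$ (indeed in any $d \ge 5$) intersect finitely often — so on the event that both walks funnel through the same sequence of blobs, the intersections inside each $Q_k$ are a.s. finite and summable. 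Either way $\P(|\{Y_n\} \cap \{Z_n\}| < \infty) > 0$, and \cref{lem:Not1Ended} yields that the UST on $G$ is a.s. multiply ended. The main obstacle I anticipate is the uniqueness-of-potential step: unlike $\mathbb{Z}^d$, the graph $G_0$ is not transitive and not obviously Liouville, so the "positive harmonic function is constant" shortcut fails and must be replaced by a genuine verification that $(G_0, 1, \o)$ — or equivalently $(G_0, c, \o)$ — admits harmonic measure from infinity, exploiting the tree-with-fat-nodes geometry so that the escape route to infinity is essentially unique.
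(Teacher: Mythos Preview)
Your overall strategy---the subdivision dictionary, recurrence via the $\varphi$-transform, uniqueness via harmonic measure from infinity, finite intersection of two $h$-processes, then \cref{lem:Not1Ended}---matches the paper's. But two of the steps have genuine gaps.

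\textbf{Uniqueness.} You correctly flag that $G_0$ need not be Liouville, so the $\Z^d$ shortcut fails; you then gesture at ``the escape route to infinity is essentially unique'' without supplying a mechanism. The paper's mechanism is concrete: on each cube $Q_k$ the function $\varphi$ is constant ($\equiv 2^{-4k}$), so the $(G_0,c)$-walk coincides with SRW there; the uniform mixing time of SRW in $Q_k$ is $O(2^{2k})$, while from any vertex of $F_k$ the walk has a uniformly positive chance of spending $\Omega(2^{3k})$ steps inside $Q_k$ before touching the tree again. Thus the walk mixes on $Q_k$ before exiting, which forces the harmonic measures on $\partial D_n$ from any two starting points on $\partial D_{n+1}$ to be uniformly close in total variation, and \cref{clm:HarmCriterion} yields harmonic measure from infinity (hence a unique potential). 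Your proposal never isolates this mixing-versus-escape-time comparison, and without it the uniqueness step is unproved.

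\textbf{Finite intersection.} Route (a) is wrong as stated: the cubes $Q_k$ connect \emph{all} subtrees of $T_2$ at level $4k$ (the entire level is identified with the face $F_k$), so two conditioned walks that sit in different subtrees below level $4k$ can---and typically will---meet in every subsequent cube $Q_j$, $j>k$. Route (b) has the right intuition but ``finite and summable'' is the whole content and is not justified: intersections within a single finite $Q_k$ are trivially finite, and the $\Z^7$ heuristic does not by itself give summability over $k$. The paper supplies the missing quantitative estimate as a separate claim (\cref{cubeclaim}): for two independent SRWs in $Q_k$ started uniformly on $F_k$ and stopped on return to $F_k$, the expected number of collisions is $O(2^{-k})$; iterating over excursions into $Q_k$ and summing over $k$ gives a finite expected total. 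Finally, your dictionary asserts that the $G$-processes intersect only where the traced $G_0$-processes do; this ignores intersections at subdivision vertices, which can occur whenever the two $G_0$-walks visit \emph{adjacent} $G_0$-vertices (each then makes excursions into the same subdivided edge). The paper closes this by observing that the same $O(2^{-k})$ bound controls adjacent-vertex coincidences as well.
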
 

\begin{proof}
As in \cref{prop:counterexample1}, we see that the rooted network $(G_0,c, \o)$ is recurrent.
  Since the uniform mixing time for SRW in $Q_k$ is $O(2^{2k})$ and from each vertex at level $4k$ of $T_2$ the SRW has positive probability of spending the next $\Omega(2^{3k})$ steps in $Q_k$, the criterion in \cref{clm:HarmCriterion} (with $D_n$ consisting of the first $4n$ levels of $T_2$ and all the attached cubes) implies that $(G_0,c, \o)$ has a unique potential $h_0$. As in the preceding proposition, the $h_0$-process in $(G_0,c, \o)$ is simply SRW in $G_0$, conditioned to avoid $\o$.  
Then $(G,\o)$ has a unique potential $h$, obtained from $h_0$ via linear interpolation, and the $h$-process in $(G,\o)$, watched only when visiting $G_0$, is exactly the $h_0$-process in $(G_0,c,\o)$. 

  Let $\{Y_s^0\}$  be an  $h_0$-process in $(G_0,c,\o)$ that is started at a uniformly chosen neighbor  of $\o$.  Observe that within each cube $Q_k$, this process evolves as an SRW.  It follows from claim \ref{cubeclaim} below that the expected number of intersections in $Q_k$ of $\{Y_s^0\}_{s \ge 0}$ with an independent copy  $\{Y^{\sharp}_t\}_{t \ge o}$  is   $O(2^{-k})$. Moreover, the same bound follows for the expected number of pairs of times $(s,t)$ when $Y_s^0$ and $Y^{\sharp}_t$ occupy adjacent nodes in $G_0$. Therefore, two independent copies of the $h$-process in $G$    will have finite intersection almost surely.
\end{proof}

\begin{claim}\label{cubeclaim}
Let $F$ be a 4-dimensional face $F \cong \{1,\dots ,n\}^4$ of a 7-dimensional discrete cube $Q \cong \{1,\dots ,n\}^7$.
Consider two independent simple random walks $\{X_t\}$ and $\{Z_t\}$ in $Q$, where $X_0,Z_0$ are uniform in $F$.
Let $\tau_X:=\min\{t \ge 1: X_t \in F\}$ and define $\tau_Z$ similarly. Then 
$$ \sum_{t,s \ge 0} \P(X_t=Z_s, \, t \le \tau_X, \, s \le \tau_Z) =O(1/n) \,.
$$
\end{claim}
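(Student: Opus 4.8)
The plan is to estimate the expected number of meetings of the two walks inside the cube $Q\cong\{1,\dots,n\}^7$ while each is still on its first excursion away from $F$, by pinning down the return time $\tau_X$ and the Green's function of SRW on $Q$ killed upon hitting $F$. First I would reduce to a single random walk: by independence, $\sum_{t,s}\P(X_t=Z_s,\,t\le\tau_X,\,s\le\tau_Z)=\sum_{w\in Q}\big(\sum_t\P(X_t=w,\,t\le\tau_X)\big)\big(\sum_s\P(Z_s=w,\,s\le\tau_Z)\big)=\sum_{w\in Q}\bar{G}(w)^2$, where $\bar G(w):=\sum_{u\in F}\tfrac1{|F|}G_F(u,w)$ and $G_F$ is the Green's function of SRW on $Q$ killed on hitting $F$ (counting time $0$). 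So the goal is $\sum_{w\in Q}\bar G(w)^2=O(1/n)$.

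Next I would control $\bar G$. Split $\bar G(w)=\mathbf 1_{\{w\in F\}}\cdot\tfrac1{|F|}+ R(w)$, where the first term collects the time-$0$ contribution from $w$ itself (giving $\sum_{w\in F}(1/|F|)^2=1/|F|=O(n^{-4})$, negligible), and $R(w)$ collects all contributions from times $t\ge1$, hence only excursions that have left $F$. For such contributions the walk is, for a macroscopic amount of time, a genuine $7$-dimensional walk away from the $4$-dimensional sheet $F$, and the killed Green's function $G_F(u,w)$ is dominated by the free Green's function $G_{\Z^7}(u,w)\asymp \|u-w\|^{-5}$ (up to boundary effects of $Q$, which only decrease it). Since $|F|=n^4$ and $u$ ranges over $F$, one gets $R(w)\lesssim n^{-4}\sum_{u\in F}\|u-w\|^{-5}$. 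Writing $w=(w',w'')$ with $w'$ the coordinates along $F$ and $w''$ the three transverse coordinates, the sum over $u\in F$ of $\|u-w\|^{-5}=(|w'-u|^2+\|w''\|^2)^{-5/2}$ is $\asymp (1+\|w''\|)^{-1}$ when $\|w''\|\le n$ (the $4$-dimensional integral $\int_{\R^4}(|x|^2+a^2)^{-5/2}dx\asymp a^{-1}$), and decays like $n^4\|w''\|^{-5}$ when $\|w''\|\ge n$. Hence $R(w)\lesssim n^{-4}(1+\|w''\|)^{-1}$ for $\|w''\|\lesssim n$.

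Finally I would sum the square. Over $w$ with $\|w''\|\le n$: $\sum_{w}R(w)^2\lesssim n^{-8}\sum_{w'\in\{1,\dots,n\}^4}\ \sum_{\|w''\|\le n}(1+\|w''\|)^{-2}\lesssim n^{-8}\cdot n^4\cdot\sum_{j=1}^{n}j^{2}\cdot j^{-2}= n^{-8}\cdot n^4\cdot n = n^{-3}$, using that there are $\asymp j^2$ lattice points with $\|w''\|\asymp j$ in $\Z^3$. Over $w$ with $\|w''\|\ge n$, the free-Green's-function bound $R(w)\lesssim \|w''\|^{-5}$ gives $\sum_{w} R(w)^2\lesssim n^4\sum_{j\ge n}j^2\cdot j^{-10}\lesssim n^4\cdot n^{-7}=n^{-3}$, where the extra $n^4$ counts the choices of $w'\in F$. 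Both regimes are $O(n^{-3})=O(n^{-1})$, and combined with the negligible diagonal term this yields $\sum_{w}\bar G(w)^2=O(1/n)$, as claimed.

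\textbf{Main obstacle.} The delicate point is making the comparison $G_F(u,w)\lesssim G_{\Z^7}(u,w)$ and the heat-kernel/Green's-function bounds fully rigorous near the boundary of $Q$ and near the sheet $F$ itself (short-range behavior when $\|u-w\|$ is $O(1)$, and the fact that killing on the lower-dimensional set $F$ genuinely removes the walk). One clean way is: $G_F(u,w)\le G_{\Z^7}(u,w)$ is immediate by a coupling (killing can only decrease occupation), and the standard estimate $G_{\Z^7}(u,w)\asymp(1+\|u-w\|)^{-5}$ then suffices for everything above; the transverse-decay factor $(1+\|w''\|)^{-1}$ comes purely from summing this bound over $u\in F$, so no sharper killed estimate is actually needed. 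The only care required is the elementary verification that $\sum_{u\in\Z^4}(|u|^2+a^2)^{-5/2}\asymp a^{-1}$ for $a\ge1$, which is a routine dyadic decomposition.
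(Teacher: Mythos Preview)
There is a genuine gap. Your key inequality $G_F(u,w)\le G_{\Z^7}(u,w)$ is false for SRW on the \emph{finite} cube $Q$: ``killing can only decrease occupation'' yields only $G_F(u,w)\le G_Q(u,w)=\infty$, and the assertion that ``boundary effects of $Q$ only decrease it'' is backwards---the boundary reflects the walk and \emph{increases} occupation relative to $\Z^7$. Concretely, your derived bound $\bar G(w)\lesssim n^{-4}(1+\|w''\|)^{-1}$ cannot hold, since summing it over $Q$ gives $O(n^2)$, whereas $\sum_{w}\bar G(w)=\E[\tau_X]+1=\Theta(n^3)$ (the face $F$ has stationary mass $\asymp n^{-3}$, so by Kac the mean return time is $\asymp n^3$). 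The correct bound is $\bar G(w)=\Theta(n^{-4})$ \emph{uniformly} in $w$: conditionally on the coordinate-selection sequence, the first four coordinates stay within a constant factor of uniform (contributing $O(n^{-4})$), while the expected number of visits of the transverse walk to any fixed $w''$ before returning to $o''$ is $\pi_3(w'')/\pi_3(o'')=\Theta(1)$ by the cycle identity. This gives $\sum_w\bar G(w)^2=\Theta(n^{-1})$, recovering the claim---but not your asserted $O(n^{-3})$.

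The paper avoids pointwise Green's-function estimates entirely. It first uses $\P(\tau_X\ge Cn^3)\le\rho<1$ (from the $O(n^3)$ hitting time in the $3$-dimensional box) to reduce to a window $t,s\le Cn^3$, and then bounds $\P(X_t=Z_s)=O\bigl(n^{-4}(n^{-3}+(t+s)^{-3/2})\bigr)$ directly, combining near-stationarity of the first four coordinates with the $3$-dimensional heat-kernel decay of the last three; the double sum over the window then gives $O(1/n)$.
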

\begin{proof}
Since the commute time in a 3-dimensional cube of side $n$ is $O(n^3)$, we have  $P_w(\tau_X \ge Cn^3)\le \rho<1$ for some integer $C>0$.
By dividing time into blocks of length $Cn^3$, it suffices to prove that
\begin{equation} \label{eqblock} \sum_{t,s = 0}^{Cn^3} \P(X_t=Z_s) =O(1/n) \,,
\end{equation}
where the first four coordinates of $X_0$ and $Z_0$ are independent and uniform in $F$ and the remaining three coordinates are arbitrary.
Now if $\widetilde{X}_t$ and $\widetilde{Z}_s$ are the last 3 coordinates of $X_t$ and $Z_s$ respectively, then
$\P(\widetilde{X}_t=\widetilde{Z}_s)=O(n^{-3}+(t+s)^{-3/2})$,
whence by considering the number of steps in these 3 coordinates,
$$\P(X_t=Z_s)=O(n^{-4}  (n^{-3}+  (t+s)^{-3/2}))\,.$$
Summing over $t,s \le Cn^3$ yields \eqref{eqblock}. 
\end{proof}

Next we present an example, due to Tom Hutchcroft, showing that the converse to \cref{lem:Not1Ended} is false. Start with a ternary tree $T$ rooted at $\o$ and enumerate the vertices of $T$ by $\{v_1,v_2, \ldots\}$ in a breadth first search, that is, the level (distance to $\o$ in $T$) of $v_n$ is a non-decreasing function of $n$. For each integer $h \geq 1$, replace every edge of $T$ between a vertex of level $\ell$ and its parent at level $\ell-1$ by a path of length $3^{3^\ell}$. Denote the resulting tree by $\widetilde{T}$. We now attach to $\widetilde{T}$ at $\o$ an infinite path $\mathbb{N}=(1,2,3,\ldots)$ and for every $n$, we connect $n$ to $v_n$ by a path of length $3^{3^\ell}$ where $\ell\geq \log_3 n-1$ is the level of $v_n$. Denote the resulting rooted graph with unit conductances by $(G,1,\o)$.

\begin{prop}\label{prop:Tom} The rooted network $(G,1,\o)$ is recurrent and has a unique potential $h$. Furthermore,  two independent $h$-processes have infinite intersection almost surely, and the UST on $(G,1)$ has infinitely many ends almost surely.
\end{prop}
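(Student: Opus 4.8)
\emph{Recurrence.} The plan is to apply the Nash--Williams criterion to the spherical edge-cutsets $\Pi_\rho=\partial_{\mathrm E}B(\o,\rho)$. One first records the elementary distance estimates $d_G(\o,m)=m$ for $m\in\mathbb N$ and $d_G(\o,v_n)\asymp 3^{3^{\ell(n)}}$, where $\ell(n)$ is the level of $v_n$; since $v_n$ is the $n$-th vertex in breadth-first order, $3^{\ell(n)}\asymp n$. For $\rho$ with $3^{3^{k-1}}\ll\rho\ll 3^{3^k}$ the sphere $\partial B(\o,\rho)$ meets only: the single vertex $\rho$ of $\mathbb N$; the $3^k\le\rho$ level-$k$ super-edges (at most twice each); and the connecting paths joining $\mathbb N$ to those $v_n$ with $n\le\rho$, of which there are at most $\rho$ (a connecting path $P_n$ with $n>\rho$ lies entirely outside $B(\o,\rho)$, its near endpoint $n$ being already at distance $>\rho$). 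As $G$ has bounded degree this gives $|\Pi_\rho|=O(\rho)$, so $\Reff(\o\lr\infty)\ge\sum_\rho|\Pi_\rho|^{-1}=\infty$ and $G$ is recurrent.

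\emph{Uniqueness of the potential.} The key point is that $\mathbb N$ acts as an attracting spine. Because every super-edge and every connecting path is enormously long, the walk observed at the vertices $\{\o\}\cup\mathbb N\cup\{v_n\}_n$ is the random walk on the ``macro-graph'' in which a path of length $L$ becomes an edge of conductance $1/L$; there the connecting edge at $m\in\mathbb N$ has the negligible conductance $3^{-3^{\ell(m)}}$, so on $\mathbb N$ the macro-walk is to leading order nearest-neighbour SRW, and from any $v_n$ it reaches $\mathbb N$ within a geometric number of macro-steps. Consequently, started from any vertex the walk reaches $\mathbb N$ before leaving a sufficiently larger ball with probability bounded below, and it then descends $\mathbb N$ and enters $B(\o,\rho)$ at the single vertex $\rho$. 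With the exhaustion $D_k=B(\o,\rho_k)$, $\rho_{k+1}$ chosen enormously larger than $\rho_k$, one couples the walks from any two $v_1,v_2\in\partial D_{k+1}$ so that they meet $\partial D_k$ at the same vertex with probability bounded below, so the quantity $q$ of \cref{clm:HarmCriterion} is $<1$; by \cref{thm:PotentialHarmonicMeasure}, $(G,1,\o)$ has a unique potential $h$.

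\emph{Infinite intersection of two $h$-processes.} A unique potential is extremal, so \cref{thm:convergence} and \cref{cor:ExitMeasureHarmonicMeasure} apply: for every $\rho$ the last-exit vertex of the $h$-process from $B(\o,\rho)$ has the law of harmonic measure from infinity on $B(\o,\rho)$, which by the spine property is the point mass at the vertex $\rho\in\mathbb N$. Equivalently, the ``random walk from infinity to $\o$'' of \cref{rmrk:cool2} descends $\mathbb N$, so the $h$-process escapes to infinity along $\mathbb N$; since it moves on $\mathbb N$ by nearest-neighbour steps apart from at most finitely many deep excursions into the connecting paths (each of probability $O(3^{-3^{\ell(m)}})$ from $m$, hence summable) and into the tree, each of which almost surely returns to $\mathbb N$, it visits all but finitely many vertices of $\mathbb N$ almost surely. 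Two independent $h$-processes therefore both visit every large vertex of $\mathbb N$, so their traces intersect in a cofinite, hence infinite, subset of $\mathbb N$.

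\emph{The UST has infinitely many ends.} Fix a child $u$ of $\o$ in $T$ and let $\widetilde T_u\subset\widetilde T$ be the subdivided subtree hanging below $u$. Run Wilson's algorithm rooted at $\o$, processing the branch vertices $v_n$ lying in $\widetilde T_u$ first, in order of increasing level (one at a time within a level). When such a $b$ at level $\ell$ is processed, its $T$-parent $b'$ already lies in the current tree, while the part of $\widetilde T$ strictly below $b$ is, up to events of probability $O(3^{-3^{\ell}})$, still untouched --- the earlier loop-erased walks can reach it only through $b$ or through a connecting path $P_n$ with index $n$ exceeding that of $b$, whereas those walks visit only $\mathbb N$-vertices of smaller index. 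Hence the loop-erased walk from $b$ either climbs the super-edge to $b'$ (putting that entire super-edge into the UST) or leaves $\widetilde T_u$; and since the super-edge to $b'$ and the connecting path $P_b$ have the \emph{same} length $3^{3^{\ell}}$, the first alternative has conditional probability $\tfrac12-o(1)$, uniformly in $\ell$, given everything processed before $b$. Fixing $p\in(\tfrac13,\tfrac12)$ below this, the set of branch vertices of $\widetilde T_u$ joined to $u$ inside the UST stochastically dominates a Galton--Watson tree with $\mathrm{Binomial}(3,p)$ offspring, which is supercritical ($3p>1$) and hence infinite --- with uncountably many ends --- with positive probability. Thus the UST has infinitely many ends with positive probability, and since the number of ends of the UST is almost surely constant (\cite[Theorem 8.3]{BLPS}), it is almost surely infinite.

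I expect the hardest step to be the quantitative version of the spine property behind the uniqueness claim: making precise, through the doubly-exponential edge lengths, that from far away the walk is funnelled onto $\mathbb N$ inside a single annulus $D_{k+1}\setminus D_k$ before reaching $\partial D_k$, so that the coupling giving $q<1$ goes through. The recurrence, intersection, and end-count steps are comparatively routine once this is in hand.
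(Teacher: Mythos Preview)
Your four-step outline matches the paper's strategy and is essentially correct; the differences are in execution.

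For recurrence you give a Nash--Williams argument with spherical cutsets, which is valid and more elementary than the paper's route (the paper deduces recurrence directly from its quantitative spine estimate). For uniqueness both you and the paper invoke \cref{clm:HarmCriterion}, but the paper makes the spine property precise: from $m\in\mathbb N$ one reaches any fixed $r<m$ before touching $\widetilde T$ with probability $\ge 1-C\,3^{-r/3}$, and from $v_m$ one reaches $m\in\mathbb N$ before any $T$-neighbor of $v_m$ with probability $\tfrac12+o_m(1)$; combining these yields $\P_w(\tau_r=\tau_{B(\o,r)})=1-o_r(1)$ for all distant $w$. Your sketch is in this spirit, and as you anticipate, making this quantitative is where the real work lies.

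Two imprecisions in your infinite-intersection step are worth flagging. First, harmonic measure from infinity on $B(\o,\rho)$ is not a point mass at $\rho$, only $(1-o_\rho(1))$-concentrated there; one needs the error to be summable (the paper's $O(3^{-r/3})$) so that Borel--Cantelli gives ``the $h$-process visits all but finitely many $r\in\mathbb N$,'' which is exactly how the paper argues. Second, your auxiliary excursion claim is then unnecessary and, as written, unjustified: the one-step probability that the $h$-process steps from $m$ into $P_m$ is $\sim\tfrac13$ (since $h$ is $1$-Lipschitz along edges), not $O(3^{-3^{\ell(m)}})$.

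For the UST, the paper takes a different and cleaner route than your sequential-Wilson Galton--Watson domination. It uses the stacks representation: from each $v_n$, run an SRW consuming stack arrows, stopped at $\mathbb N$ or a $T$-neighbor of $v_n$. Because each such walk reads a fresh tail of every stack, these walks are genuinely \emph{independent}, so the events ``the walk from $v$ exits at its $T$-parent'' define an independent bond percolation on $T$ with edge probability $\tfrac12+o(1)$, hence supercritical; whenever $v$ lies in the open cluster of $\o$, the $\widetilde T$-path from $v$ to $\o$ lies in the UST. This bypasses the conditional-probability bookkeeping your approach requires --- in particular your claim that the subtree below $b$ is ``untouched,'' which needs care since earlier LERWs can in principle enter it via $\mathbb N$ and a connecting path.
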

\begin{proof} 
Starting from the vertex $m\in \mathbb{N}$, the probability of visiting the pair $\{m-1,m+3^m+1\}$ before reaching the tree $\widetilde{T}$ is at least $1-C_13^{-m/3}$. Indeed, the effective conductance between $\{m, m+1,\ldots, m+3^m  \}$ and $\{v_m,v_{m+1}, \ldots,v_{m+3^m} \}$ is at most $C_13^{-m/3}$ by the parallel law. 
Therefore, the probability that a simple random walk from $m$ visits $m-1$ before reaching the tree $\widetilde{T}$ is at least
$1-C_2 3^{-m/3}$.
We conclude that, starting from $m$,  the probability of visiting $r<m$ before reaching the tree $\widetilde{T}$ is at least $1-C_3 3^{-r/3}$. 

On the other hand, starting from a vertex $v_m$, the probability of visiting $m$ before reaching either the parent or children of $v_m$ in $T$ is ${1/2}+o_m(1)$, since the paths from $v_m$ to its parent and to $m$ have the same length, while the three paths from $v_m$ to its children are much longer. Hence, for every fixed $r$, the probability that a random walk started at   $v_m$   visits $\lfloor \sqrt{m} \rfloor$ before reaching $B_G(\o,r)$ is $1-o_m(1)$ as $ m \to \infty$.

We deduce from these two estimates  that for any integer $r>0$, there exists $M(r)$  such that every vertex $w$  outside $B_G(\o,M(r))$ satisfies
\be \label{cool}
\P_w( \tau_r=\tau_{B_G(\o,r)}< \infty)=1-o_r(1)\,.
\ee
In particular, this implies that $G$ is recurrent. Furthermore, by \cref{clm:HarmCriterion} harmonic measure from infinity exists 
and \cref{thm:PotentialHarmonicMeasure} implies that the network admits a unique potential $h$. By combining \eqref{cool} and    \cref{cor:ExitMeasureHarmonicMeasure}, we infer that the probability that two independent $h$-processes visit the vertex $r$ is at least $1-o_r(1)$, hence by Borel-Cantelli, their intersection is infinite almost surely.

To study the UST on $G$ recall the stacks representation of Wilson's algorithm \cite[Section 4.1]{TheBook}. We use it to define an independent bond percolation process $H$ on $T$ as follows. For $n=1,2,3,\ldots$ we run a simple random walk in $G$ starting at $v_n$ using the stacks (we discard from the stacks the arrows we used), stopped when reaching $\mathbb{N}$ or $v_n$'s neighbors in $T$. Suppose $w$ is the parent of $v$ in $T$. We declare the edge $vw \in E(T)$ \emph{open} in $H$, if the random walk above, started at $v$, visits $w$ before visiting $\mathbb{N}$ or the children of $v$ in $T$. The probability that an edge $vw \in E(T)$ is open in $H$ is ${1/2}+o(1)$ as $v\to \infty$. Thus, $H$ is a supercritical percolation processes on $T$, whence with positive probability the cluster of $\o$ in $H$ is infinite and has uncountably many ends. We note that if $v\in T$ belongs to the cluster of $\o$ in $H$, then the path from $v$ to $\o$ in $\widetilde{T}$ is contained in the UST of $G$. Thus, the UST of $G$ has uncountably many ends with positive probability. Finally, this probability is $1$ by \cite[Lemma 8.3]{BLPS}.     
\end{proof}

\section*{Acknowledgements} The authors are supported by ERC consolidator grant 101001124 (UniversalMap) as well as ISF grants 1294/19 and 898/23. We are grateful to Tom Hutchcroft for allowing us to present his example, described above \cref{prop:Tom}.

\printbibliography

\end{document}